\documentclass[11pt]{amsart}
\PassOptionsToPackage{dvipsnames}{xcolor}
\usepackage[margin=1in]{geometry}
\usepackage{amsmath,amssymb,mathtools,tikz}
\usetikzlibrary{decorations.pathreplacing}
\usepackage{microtype,nicefrac}
\usepackage[dvipsnames]{xcolor}
\usepackage[colorlinks=true,linkcolor=RoyalBlue,citecolor=PineGreen,urlcolor=BrickRed]{hyperref}
\definecolor{oiblue}{HTML}{0072B2}
\definecolor{oisky}{HTML}{56B4E9}
\definecolor{oiorange}{HTML}{E69F00}
\definecolor{oivermillion}{HTML}{D55E00}
\colorlet{figchain}{oiblue!70!black}
\colorlet{figtube}{oisky!30}
\colorlet{figshade}{oisky!18}
\colorlet{figcube}{oiblue}
\colorlet{figguide}{gray!65}
\colorlet{figfaint}{gray!35}
\colorlet{fighigh}{oivermillion}
\colorlet{fighit}{oiorange!45}
\newcommand{\nf}{\nicefrac}
\newcommand{\E}{\mathbb E}
\newcommand{\Pp}{\mathbb P}
\newcommand{\Var}{\operatorname{Var}}
\newcommand{\Os}{\mathcal O}
\newcommand{\repourl}{\url{https://github.com/nitromannitol/FPP-variance}}
\newcommand{\R}{\mathbb R}
\newcommand{\Z}{\mathbb Z}
\newcommand{\dist}{\operatorname{dist}}
\newtheorem{theorem}{Theorem}[section]
\numberwithin{equation}{section}
\newtheorem{lemma}[theorem]{Lemma}
\newtheorem{proposition}[theorem]{Proposition}
\newtheorem{assumption}{Assumption}

\theoremstyle{remark}

\title[Fluctuation bounds for invariant FPP]{Fluctuation bounds for rotationally invariant first-passage percolation}
\author{Scott Armstrong}
\author{Ahmed Bou-Rabee}
\date{}
\begin{document}
\begin{abstract}
We prove that in rotationally invariant first-passage percolation on~$\R^d$ with~$d\geq2$, the passage time~$T_n$ over a distance~$n$ satisfies~$\E[\exp(|T_n-\mu n|/n^{\frac12(1-\varepsilon)})]\leq C$ for some~$\varepsilon>0$, where~$\mu$ is the time constant. This implies that its variance is at most a constant times~$n^{1-\varepsilon}$, the bound that Basu, Sidoravicius and Sly~\cite{BSS23,BSS} proved in the planar case. As in their work, our argument is an induction on the scale, but in place of their proof that the geodesic is chaotic, we use the averaging argument of Benjamini, Kalai and Schramm~\cite{BKS} together with a logarithmic improvement of the Efron--Stein inequality due to Bena\"im and Rossignol~\cite{BR}.
\end{abstract}
\maketitle

\section{Introduction}

In continuum first-passage percolation, one measures distances in~$\R^d$ by integrating a random density along curves. A natural example starts from Gaussian white noise~$\omega$ on~$\R^d$ with~$d\geq2$. Convolving it with a smooth radial function~$\chi$ supported in the closed unit ball, and applying a continuous function~$\psi$ with values between two positive constants, gives the random density
\begin{equation*}
\rho\coloneqq\psi(\chi*\omega)\,.
\end{equation*}
Thus the value of~$\rho$ at a point depends only on the noise within unit distance of that point, and~$\rho$ is equivariant under rigid motions. The \emph{passage time} between two points~$u,v\in\R^d$ is
\begin{equation}\label{eq:passage}
T(u,v)\coloneqq\inf_{\pi:u\to v}\int_\pi\rho(x)\,|dx|\,,
\end{equation}
where the infimum is over rectifiable curves~$\pi$ from~$u$ to~$v$. By subadditivity, the \emph{time constant}~$\mu\coloneqq\lim_{t\to\infty}t^{-1}\E[T(0,te_1)]$ exists, and it is positive and finite since~$\rho$ lies between two positive constants.

\begin{theorem}\label{thm:main}
For the density~$\rho=\psi(\chi*\omega)$, there exist~$\varepsilon>0$ and~$C<\infty$ such that, for every~$x\in\R^d$ with~$|x|\geq1$,
\begin{equation*}
\E\Bigl[\exp\Bigl(\frac{|T(0,x)-\mu|x||}{C|x|^{\frac12(1-\varepsilon)}}\Bigr)\Bigr]\leq2\,.
\end{equation*}
\end{theorem}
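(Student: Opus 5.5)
The proof will be an induction on the scale. The inductive quantity is the optimal constant in a sub-exponential concentration bound at scale $n$. Set
\begin{equation*}
\theta(n)\coloneqq\inf\Bigl\{s>0:\ \sup_{n\le|x|\le 2n}\E\bigl[\exp\bigl(|T(0,x)-\E T(0,x)|/s\bigr)\bigr]\le2\Bigr\}\,.
\end{equation*}
The goal is the recursive inequality $\theta(Kn)\le K^{\frac12(1-\varepsilon)}\theta(n)+C(Kn)^{\frac12(1-\varepsilon)}$ for a large fixed $K$. Iterating it gives $\theta(n)\le Cn^{\frac12(1-\varepsilon)}$.

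The centering $\E T$ will then be replaced by $\mu|x|$. For this I will use a separate nonrandom-fluctuation estimate, $\mu|x|\le\E T(0,x)\le\mu|x|+C\theta(|x|)\log|x|$, proved in the style of Alexander's subadditivity-with-error argument from the concentration at all smaller scales. Any resulting logarithmic loss is absorbed by shrinking $\varepsilon$. Rotational invariance matters here: the limit shape is a Euclidean ball, so $\E T(0,x)$ depends only on $|x|$. It is also used in the induction step, where path segments point in all directions and must all have the same scale behaviour.

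For the induction step I will combine two ingredients.

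\emph{First}, a BKS-type averaging. Replace $T(0,x)$ by an average $F=\frac1{|B|}\int_B T(z,z+x)\,dz$ over a ball $B$ of radius $n^{\alpha}$ transverse to $x$. Its difference from $T(0,x)$ is at most $Cn^{\alpha}$, since $\rho$ is bounded. Discretize the white noise into independent unit cells $\omega_Q$.

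\emph{Second}, the Benaïm--Rossignol inequality, in the exponential-moment form (a log-Sobolev or entropy version of Efron--Stein with the logarithmic gain). It bounds the log-moment generating function of $F$ by $\sum_Q \E[(\Delta_QF)^2]$ divided by $\log\bigl(\sum_Q\|\Delta_QF\|_2^2/\sum_Q\|\Delta_QF\|_1^2\bigr)$.

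Resampling cell $Q$ changes $T(z,z+x)$ by at most $C\,\mathbf 1\{Q\text{ near the geodesic }\gamma_z\}$. So $\Delta_QF$ is bounded by $C$ times the fraction of shifts $z$ whose geodesic passes near $Q$. To control that fraction, I need a transversal-fluctuation bound at scale $n$ from the inductive hypothesis, using the standard argument that a geodesic wandering distance $h$ costs order $h^2/n$. This bound says the geodesic lies in a tube of width $n^{\xi}$ with $\xi<1$. Then $\|\Delta_QF\|_1\lesssim n^{-(d-1)\alpha}$ per cell while $\sum\|\Delta_QF\|_2^2\lesssim n$, and the log-ratio is of order $\log n$. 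This alone gives only $\Var\lesssim n/\log n$, which is why the induction is needed.

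To gain a power of $n$, I will apply the above not to $T$ at scale $Kn$ directly. Instead I write $T(0,x)$, up to error, as a minimum over concatenations of roughly $K$ passage times at scale $n$. These pieces are nearly independent and, by induction, each has fluctuation $\theta(n)$. The sum of $K$ such pieces fluctuates by $\sqrt K\,\theta(n)$. The BKS/Benaïm--Rossignol step is then applied at the coarse scale, and contributes the extra factor $(\log K)^{-1/2}$ that beats the $\sqrt K$. Choosing $K$ large makes the contraction strict, which yields $\varepsilon>0$.

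The main obstacle is this last step: making the coarse-grained BKS argument rigorous. That requires two things. (i) The minimum over concatenations must inherit the inductive concentration, handled by a union bound over polynomially many intermediate points, which costs only logarithms thanks to the exponential moments. (ii) The coarse influences must be genuinely small, so that the geodesic of the averaged problem does not concentrate on few coarse blocks. My first attempt at (ii) is to use the inductive transversal bound to show that distinct shifts $z$ give geodesics separated by $n^{\alpha}\gg n^{\xi}$ in most blocks. I expect the exponent bookkeeping between $\alpha$, $\xi$ and $K$ to be the delicate point.
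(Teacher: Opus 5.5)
Your architecture (induction from scale $n$ to $Kn$, a BKS average, Bena\"im--Rossignol for a $\log K$ gain) matches the paper's. The gap is in the coarse-scale Bena\"im--Rossignol inequality, which carries the whole gain: your proposal gives no mechanism for it, and as set up it cannot work.

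With your unit cells $\omega_Q$ as the variables, the energy $\sum_Q\E[(\Delta_QF)^2]$ is of order the length $Kn$ of the geodesic, whatever $\theta(n)$ is. So the inductive hypothesis never enters, and you only get $Kn/\log(Kn)$. To get an energy of order $K\theta(n)^2$, the variables must be whole blocks $G$, with the influence of $G$ measured by resampling $\omega|_G$. But Bena\"im--Rossignol with such a variance-type energy needs $\operatorname{Ent}[g^2]\leq C\Var[g]$ for functions $g$ of one variable. This fails for the non-atomic law of $\omega|_G$: for $g=\mathbf 1_A$ with $\Pp[A]=p$, the left side is $p\log(1/p)$ and the right side is at most $Cp$. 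The log-Sobolev inequality of the noise itself only returns the fine-scale energy.

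The paper's fix is the labels of Lemma~\ref{lem:labels}. These are finitely valued functions $X_G$ of $\omega|_G$, each value having probability at least $\alpha_L$ uniformly in $n$, that approximate the normalized crossing profile $(T^{(i)}(u,v)-n)/Q_n$. Proposition~\ref{prop:BR} is applied to $f=\E[\overline F\mid(X_G)_G]$. The residual $\E[\Var[\overline F\mid(X_G)_G]]$ is shown to be at most $\delta_LKQ_n^2$ by a conditional Efron--Stein inequality (Lemma~\ref{lem:residual}). That argument uses the transverse bound and the continuity of $T_S(u,v)-|v-u|$ in the endpoints (Lemma~\ref{lem:modulus}). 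The same continuity is what compares the average with $T(0,x)$. At the coarse scale your shifts must exceed the tube width at scale $n$, which is far larger than $\theta(Kn)$, so your Lipschitz bound $Cn^\alpha$ on the difference is useless there.

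There is a second problem. The gain is only a factor $(\log K)^{-\nf12}$, so each induction step may lose only constants independent of $n$, and your plan loses more.
\begin{itemize}
\item A union bound over polynomially many intermediate points that ``costs only logarithms'' multiplies the recursion by $\log n$ at every step. The product $\prod_{j\leq J}\log(K^j)$, with $n=K^J$, grows faster than every power of $n$, so shrinking $\varepsilon$ does not absorb it.
\item The same applies to your mean bound $\E T(0,x)\leq\mu|x|+C\theta(|x|)\log|x|$. It is needed inside the induction, not only at the end, because the nonrandom fluctuations at scale $n$ set the tube width in the transverse argument.
\end{itemize}
The paper avoids these losses in two ways. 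It cuts the faces into cubes of side $(sQ_s)^{\nf12}$ and charges a quadratic penalty for transverse jumps (Lemmas~\ref{lem:rowenergy} and~\ref{lem:polynomialarray}). It also puts the mean into the hypothesis $|T_s-s|\leq\Os_1(Q_s)$, and reproves it at one scale with the small constant $\nf1{100}$, using the small variance (Lemma~\ref{lem:smallmean}).

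Finally, you rely on an exponential-moment form of the coarse inequality, which you would have to prove with block influences controlled only in distribution. The paper avoids this. It proves the gain only for the variance and converts it by Cauchy--Schwarz against an exponential bound at scale $Q_{Kn}/8$ whose constant does not depend on $K$. A union bound over chains across $K$ slabs does not give that bound, since it only localizes $T_{Kn}$ in a window of width of order $K\theta(n)$. The paper instead gets it from the hypothesis at all scales up to $N/2$, using a fixed number of slabs (Lemma~\ref{lem:parent}).
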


Theorem~\ref{thm:main} implies that~$\Var[T(0,x)]\leq2C^2|x|^{1-\varepsilon}$ for~$|x|\geq1$. It also improves the classical concentration on the scale~$|x|^{\nf12}$, due on the lattice to Kesten~\cite[(1.15)]{Kesten} and, with Gaussian tails, to Talagrand~\cite[(8.3.2)]{Talagrand95}. In first-passage percolation on~$\Z^d$, with independent and identically distributed edge weights, Kesten proved that~$\Var[T(0,x)]\leq C|x|$ under mild conditions, and asked for a bound by~$|x|^{1-\varepsilon}$~\cite[Remark~1]{Kesten}. Benjamini, Kalai and Schramm~\cite{BKS} improved his bound by a factor~$1/\log|x|$ for weights taking two values with equal probability, and this has since been extended to broad classes of weights~\cite{BR,DHS}. Kesten's question remains open~\cite[Question~13]{ADH}. However, in a companion paper~\cite{ABlattice}, we answer it positively in every direction in which the time constant satisfies a nondegenerate~$C^{2,\alpha}$ condition. No such condition is known for any distribution of the weights; however, for exponential weights on~$\Z^2$, strict convexity and differentiability of the limit shape were recently announced~\cite{OpenAI26}.

For~$d=2$, with~$\chi\geq0$ and~$\psi$ smooth and increasing, this variance bound was proved by Basu, Sidoravicius and Sly~\cite{BSS23,BSS}. Their proof is an induction on scales. Write~$T_n\coloneqq T(0,ne_1)$. Since~$T_{Kn}$, for a large integer~$K$, can be compared with a sum of~$K$ independent passage times across slabs of width~$n$, the Efron--Stein inequality gives~$\Var[T_{Kn}]\leq CK\Var[T_n]$. Replacing~$C$ by~$\nf12$ and iterating over the scales~$K^j$ gives a power saving. To show that~$C$ can be replaced by~$\nf12$, Basu, Sidoravicius and Sly prove that the geodesic is chaotic, in the sense of Chatterjee~\cite{Chatterjee08}: after the noise is resampled in a small random fraction of space, the new geodesic has a small overlap with the old one. This carries out a suggestion of Kesten, who noted that a power might be gained by bounding the probability that an edge lies on the geodesic. Their proof uses the FKG inequality, which holds under their assumptions~$\chi\geq0$ and~$\psi$ increasing. It also uses estimates from~\cite{BSS23}, which rely on planarity to bound the variance from below.

We follow a different multiscale strategy: we induct not on the variance but on an exponential moment of~$|T_n-\mu n|$ on an explicit scale~$n^{\frac12(1-\varepsilon)}$. Exponential tails give bounds that hold simultaneously for many pairs of points, and the induction controls the mean as well, so we need neither the estimates of~\cite{BSS23} nor a lower bound on the variance. We also obtain the improvement on the Efron--Stein inequality differently. As Benjamini, Kalai and Schramm did on the lattice, we average the passage time over translations of its endpoints. Rotational invariance makes the limit shape a ball, so the fluctuation bounds of the induction keep near-geodesics close to the segment between their endpoints. Hence each piece of the noise affects only a small fraction of the translated passage times, and the inequality of Bena\"im and Rossignol~\cite{BR} gives a gain logarithmic in~$K$. Our proof uses neither planarity nor the FKG inequality, so it applies in every dimension and to every~$\chi$ and~$\psi$ as above.

\subsection{General densities}\label{ssec:general}

\begin{assumption}\label{ass:local}
Let~$d\geq2$, and let~$\omega$ be Gaussian white noise or a homogeneous Poisson process on~$\R^d$. The density~$\rho(\omega,x)$ is jointly Borel measurable, and:
\begin{enumerate}
\item \emph{Locality:} $\rho(\omega,x)=\rho(\omega',x)$ whenever~$\omega$ and~$\omega'$ agree on the closed unit ball centered at~$x$;
\item \emph{Boundedness:} $0<\inf_{\omega,x}\rho(\omega,x)\leq\sup_{\omega,x}\rho(\omega,x)<\infty$;
\item \emph{Equivariance under rigid motions:} $\rho(\omega\circ g^{-1},gx)=\rho(\omega,x)$ for every composition~$g$ of a rotation and a translation.
\end{enumerate}
\end{assumption}

The passage time~$T$ and the time constant~$\mu$ are defined as above.

\begin{theorem}\label{thm:general}
Under Assumption~\ref{ass:local}, there exist~$\varepsilon>0$ and~$C<\infty$ such that, for every~$x\in\R^d$ with~$|x|\geq1$,
\begin{equation*}
\E\Bigl[\exp\Bigl(\frac{|T(0,x)-\mu|x||}{C|x|^{\frac12(1-\varepsilon)}}\Bigr)\Bigr]\leq2\,.
\end{equation*}
\end{theorem}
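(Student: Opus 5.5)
We prove Theorem~\ref{thm:general} by induction on scales, propagating an exponential-moment bound instead of a variance bound. Fix a large integer~$K$ and scales~$n_j\coloneqq K^j n_0$. Let~$\theta(n)$ be the smallest~$C$ such that
\begin{equation*}
\E\bigl[\exp\bigl(|T(0,x)-\mu|x||/(C|x|^{\frac12(1-\varepsilon)})\bigr)\bigr]\leq2
\end{equation*}
for all~$x$ with~$n\leq|x|\leq Kn$. By rotational equivariance it suffices to take~$x=te_1$. At the base scale, boundedness of~$\rho$ gives~$|T-\mu|x||\leq C|x|$, so~$\theta(n_0)<\infty$. It then suffices to show that~$\theta(Kn)\leq\max(\theta(n),C_0)$ once~$\varepsilon$ is small in terms of~$K$, and~$K$ is large.

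\textbf{Step 1: mean control.} Subadditivity gives~$\E T_t\geq\mu t$. The upper bound comes from concatenation: split the segment from~$0$ to~$Kn\,e_1$ into~$K$ pieces. The inductive exponential bound on each piece, together with a union bound over a polynomial net of endpoints, gives~$\E T_{Kn}-\mu Kn\leq C\sqrt{\log n}\,\theta(n)n^{\frac12(1-\varepsilon)}$. This term is lower order compared with~$(Kn)^{\frac12(1-\varepsilon)}$ when~$K$ is large. It also yields a uniform-in-endpoints version of the inductive bound.

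\textbf{Step 2: localization of near-geodesics.} Rotational invariance makes the limit shape a Euclidean ball, so the norm is strictly convex with quadratic curvature. Suppose a path from~$0$ to~$Kne_1$ passes through a point at distance~$h$ from the segment. Its time then exceeds~$\mu Kn$ by about~$\mu h^2/(Kn)$ in deterministic terms, up to fluctuation errors of order~$\theta(n)n^{\frac12(1-\varepsilon)}\sqrt{\log}$ at each of~$K$ pieces; these errors are controlled simultaneously by Step~1. Hence, with overwhelming probability, every near-geodesic stays in a tube of width~$w\approx K^{\frac34}n^{\frac34}$ around the segment. This width is much smaller than the spread~$L$ of the translations used next, since we take~$L\approx Kn$.

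\textbf{Step 3: averaging and Bena\"im--Rossignol.} Let~$F$ be the average of~$T(y,y+Kne_1)$ over~$y$ uniform in a box of side~$L$ transverse to~$e_1$ and of length~$\ll Kn$ along it. Then~$|F-T(0,Kne_1)|$ is controlled by the passage times of short connecting segments, which are of lower order. Partition space into unit cells~$Q$; in the Poisson case, resample~$\omega$ on each cell. The influence of~$Q$ on~$F$ is at most~$C$ times the fraction of translates whose tube meets~$Q$, which by Step~2 is~$\lesssim w^{d-1}/L^{d-1}$ (or~$\lesssim w/L$, depending on the geometry). The Bena\"im--Rossignol inequality, in its exponential/entropy form, bounds the log-moment generating function of~$F-\E F$ by
\begin{equation*}
\frac{\sum_Q\|\Delta_QF\|_2^2}{\log\bigl(\sum_Q\|\Delta_QF\|_2^2/\sum_Q\|\Delta_QF\|_1^2\bigr)}.
\end{equation*}
The numerator is~$\lesssim Kn$, matching the Efron--Stein bound. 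The ratio inside the logarithm is at least a power of~$n$, or at least~$K^c$. This yields sub-Gaussian concentration of~$F$ at scale~$(Kn)^{1/2}/\sqrt{c\log K}$, which is strictly better than the~$(Kn)^{\frac12(1-\varepsilon)}\theta(n)$ we must reproduce if we choose~$\varepsilon\log K\ll\log\log K$. The exponential-moment version is obtained by applying the argument to~$e^{\lambda F}$ via a modified log-Sobolev inequality, with~$\lambda$ in the relevant range.

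\textbf{Expected obstacle.} The delicate point is Step~3 combined with Step~2. The exponential form of Bena\"im--Rossignol needs the small-influence property almost surely, or on an event whose complement is handled by a cruder bound. We also need to keep the constants from Steps~1--2, which involve~$\theta(n)$ and~$\sqrt{\log n}$ union-bound factors, from swamping the~$\log K$ gain. To handle this, I would first truncate~$F$ to the good event and use Kesten--Talagrand concentration on its complement, then check that all losses are~$K^{o(1)}$ while the gain is~$(\log K)^{1/2}$.
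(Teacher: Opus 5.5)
There is a genuine gap. Several losses you accept are not uniform in~$n$, so the induction does not close.

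\emph{Step~3, the variance gain.} You resample unit cells, so the only available bound on the numerator is the classical $\sum_Q\|\Delta_QF\|_2^2\lesssim Kn$. It carries no information from~$\theta(n)$. The most the inequality can then return is $\Var[F]\lesssim Kn/\log K$ (or $Kn/\log(Kn)$, as in Benjamini--Kalai--Schramm). Matching the scale $\theta\,(Kn)^{\frac12(1-\varepsilon)}$ would require $(Kn)^{\varepsilon/2}\lesssim\theta\sqrt{\log K}$, which fails as $n\to\infty$ with~$K$ fixed. Your condition $\varepsilon\log K\ll\log\log K$ silently drops the factor~$n^{\varepsilon/2}$.

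The missing idea is to apply the inequality to coarse variables at scale~$n$. The paper compares~$T_{Kn}$ with the minimum~$F$ in~\eqref{eq:F} over chains through~$K$ independent slabs of width~$n$. It resamples blocks~$G$ of these slabs with transverse side~$L(nQ_n)^{\nf12}$. By the induction hypothesis, made uniform over cubes, resampling a block moves~$F$ by~$O(Q_n)$, and on average only boundedly many blocks per slab matter. Hence $\sum_G\E[(F^G-F)^2]\leq CKQ_n^2=CK^{\nf2K}Q_{Kn}^2$, and now a factor $1/\log K$ does beat~$K^{\nf2K}$. Proposition~\ref{prop:BR} needs variables with finitely many values, each of probability at least~$\alpha$. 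So the passage times across each block are discretized into labels~$X_G$ (Lemma~\ref{lem:labels}), the inequality is applied to $f=\E[\overline F\mid(X_G)_G]$, and $\E[\Var[\overline F\mid(X_G)_G]]$ is bounded separately by Efron--Stein (Lemma~\ref{lem:residual}).

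\emph{Step~3, the averaging geometry.} With transverse spread $L\approx Kn$, the connecting segments have length of order~$Kn$, not lower order. Translates far from the origin have near-geodesics far from that of~$T(0,Kne_1)$, so a small variance of~$F$ says nothing about~$T(0,Kne_1)$. To keep the average close to~$T(0,Kne_1)$, the shifts must be small compared with the transverse scale $(KnQ_{Kn})^{\nf12}$ at distance~$Kn$; this is what Lemma~\ref{lem:modulus} at that distance allows. But then your tube of width about~$(Kn)^{\nf34}$ is wider than the whole range of shifts, and your influence bound gives nothing.

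The paper escapes through its two scales. Across one slab, a near-geodesic lies in a tube of width~$(nQ_n)^{\nf12}$, and the minimizing chain jumps by at most $O(K^{\nf12}(nQ_n)^{\nf12})$ in mean. Meanwhile $\lceil K^{\nf23}\rceil$ shifts by~$L(nQ_n)^{\nf12}$ stay much smaller than $(KnQ_{Kn})^{\nf12}\approx K^{\nf34}(nQ_n)^{\nf12}$. The window $\frac12<\frac23<\frac34$ gives the influence fraction~$K^{-\nf16}$.

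\emph{Step~1, the mean.} It has the same defect. Concatenation only gives $\E[T_{Kn}]-\mu Kn\leq K(\E[T_n]-\mu n)$, which can exceed the scale $\theta\,(Kn)^{\frac12(1-\varepsilon)}$ by a factor~$K^{(1+\varepsilon)/2}$. Your bound $C\sqrt{\log n}\,\theta(n)n^{\frac12(1-\varepsilon)}$ is not lower order either, since $\sqrt{\log n}$ is unbounded for fixed~$K$. By Jensen's inequality, an exponential moment at most~$2$ on the scale $\theta\,(Kn)^{\frac12(1-\varepsilon)}$ forces the mean error below~$\log2$ times that scale. So you need a bound with a \emph{small} constant; even a log-free bound with a large constant would not do.

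The paper proves $\E[T_s]-s\leq Q_s/100$ only after the variance gain (Lemma~\ref{lem:smallmean}). It bounds the mean of the chain minimum over~$\lceil s\rceil$ slabs of width~$s$ in two ways:
\begin{itemize}
\item from below, using the small fluctuations across each slab;
\item from above, using the classical bound $\E[T_t]-t\leq C(t\log(2+t))^{\nf12}$ at the distance $t=s\lceil s\rceil$, where all errors are $O(\log^2(2+s))$ per slab.
\end{itemize}

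\emph{Uniform bounds and the exponential form.} For the same reason, the uniform bounds over endpoints must avoid union bounds over polynomial nets, which cost multiplicative logarithms. The paper uses dyadic chaining (Lemma~\ref{lem:tree}) and a quadratic penalty on the transverse jumps between cubes of side~$(sQ_s)^{\nf12}$ (Lemma~\ref{lem:rowenergy}). Finally, no exponential form of the Bena\"im--Rossignol inequality is needed. The paper interpolates by Cauchy--Schwarz between a crude exponential bound with a large constant and the small variance.
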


Since~$\psi(\chi*\omega)$ satisfies Assumption~\ref{ass:local} also when~$\omega$ is a Poisson process, Theorem~\ref{thm:general} contains Theorem~\ref{thm:main} and the Poisson case of the theorem of Basu, Sidoravicius and Sly.

\subsection{Proof overview}\label{ssec:overview}

By stationarity and rotational invariance,~$T(u,v)$ has the law of~$T_t$ with~$t=|v-u|$. Replacing~$\rho$ by~$\rho/\mu$, we assume that~$\mu=1$, so that~$\E[T_t]\geq t$ for every~$t>0$, by subadditivity. Fix a large integer~$K$ and a large constant~$A$, and let~$Q_s\coloneqq As^{\nf12-\nf1K}$. We induct on the following statement, for~$s\geq1$:
\begin{equation}\label{eq:hypothesis}
\E\bigl[\exp\bigl(|T_s-s|/Q_s\bigr)\bigr]\leq2\,.
\end{equation}
It controls both the \emph{random fluctuations}~$T_s-\E[T_s]$ and the \emph{nonrandom fluctuations}~$\E[T_s]-s$. For small~$s$, it holds if~$A$ is large, since~$\rho$ is bounded; this is the base case of the induction.

The induction step derives~\eqref{eq:hypothesis} at a distance~$Kn$ from~\eqref{eq:hypothesis} at the distances~$s\leq Kn/2$. By union bounds, the latter give an exponential bound at the distance~$Kn$, but with a large constant:
\begin{equation*}\tag{a}
\E\bigl[\exp\bigl(8|T_{Kn}-\E[T_{Kn}]|/Q_{Kn}\bigr)\bigr]\leq B\,,
\end{equation*}
where~$B$ does not depend on~$A$ or~$K$. This bound cannot be iterated, since the constant would grow at each step. Our main estimate improves on the bound~$\Var[T_{Kn}]\leq CKQ_n^2=CK^{\nf2K}Q_{Kn}^2$ given by the Efron--Stein inequality:
\begin{equation*}\tag{b}
\Var[T_{Kn}]\leq\delta_KQ_{Kn}^2\,,\qquad\text{with~$\delta_K\to0$ as~$K\to\infty$.}
\end{equation*}
We interpolate between~(a) and~(b) with the Cauchy--Schwarz inequality:
\begin{equation*}
\E\bigl[\exp\bigl(4|T_{Kn}-\E[T_{Kn}]|/Q_{Kn}\bigr)\bigr]\leq1+4(B\delta_K)^{\nf12}\,.
\end{equation*}
By subadditivity,~$\E[T_{Kn}]\geq Kn$, and a separate argument gives
\begin{equation*}\tag{c}
\E[T_{Kn}]-Kn\leq Q_{Kn}/25\,.
\end{equation*}
For~$K$ large, the last two bounds give~$\E[\exp(2|T_{Kn}-Kn|/Q_{Kn})]\leq2$, which is~\eqref{eq:hypothesis} at~$s=Kn$ with~$Q_{Kn}/2$ in place of~$Q_{Kn}$.

To prove~(a), (b) and~(c), we compare~$T_{Kn}$ with independent passage times across the~$K$ slabs~${[(i-1)n,in]\times\R^{d-1}}$ (Figure~\ref{fig:slabs}). Here a \emph{slab} of width~$s$ is the image of~$[0,s]\times\R^{d-1}$ under a rigid motion, and its \emph{faces} are the images of~$\{0\}\times\R^{d-1}$ and~$\{s\}\times\R^{d-1}$. For a closed set~$P\subseteq\R^d$, let~$\rho_P$ be~$\rho$ at distance more than one from the boundary of~$P$ and~$\sup\rho$ elsewhere in~$P$, and, for~$x,y\in P$, let
\begin{equation}\label{eq:slabtime}
T_P(x,y)\coloneqq\inf_\pi\int_\pi\rho_P(z)\,|dz|\,,
\end{equation}
where the infimum is over the curves~$\pi$ from~$x$ to~$y$ in~$P$. We write~$T^{(i)}$ for~$T_P$ when~$P$ is the~$i$th slab. By the locality in Assumption~\ref{ass:local}, $T_P$ is determined by~$\omega|_P$, so the~$T^{(i)}$ are independent, and, by~\eqref{eq:hypothesis}, each fluctuates on the scale~$Q_n$.

\begin{figure}[t]
\centering
\begin{tikzpicture}[x=.8cm,y=.8cm,every node/.style={font=\small}]
\def\h{0.4}
\fill[figshade] (7.2,-1.6) rectangle (9.6,1.6);
\draw[figguide,dashed] (0,0)--(14.4,0);
\foreach \i in {0,...,6}{\draw[figguide] (2.4*\i,-1.6)--(2.4*\i,1.6);}
\foreach \i/\ya/\yb/\ph in {0/0/1.4/0,1/1.4/2.6/70,2/2.6/1.6/150,3/1.6/0.4/220,4/0.4/-0.8/300,5/-0.8/0/30}{
  \draw[thick] plot[domain=0:1,samples=90,smooth,variable=\t]
    ({2.4*\i+2.4*\t},{\h*(\ya+(\yb-\ya)*\t)+0.13*sin(180*\t)*sin(720*\t+\ph)});
}
\draw[gray!55,densely dotted,very thick] (0,0)--(2.4,-.3)--(4.8,-.55)--(7.2,.1)--(9.6,.75)--(12,.45)--(14.4,0);
\foreach \x/\y in {2.4/-.3,4.8/-.55,7.2/.1,9.6/.75,12/.45}{\fill[figguide] (\x,\y) circle (1.4pt);}
\foreach \i/\y in {1/1.4,2/2.6,3/1.6,4/0.4,5/-0.8}{\fill (2.4*\i,\h*\y) circle (1.8pt);}
\fill (0,0) circle (2.4pt) (14.4,0) circle (2.4pt);
\node[left] at (-.05,0) {$0$};
\node[right] at (14.45,0) {$Kne_1$};
\draw[<->,figguide] (7.2,1.85)--(9.6,1.85);
\node[above] at (8.4,1.85) {$n$};
\node[left] at (7.15,.42) {$x_{i-1}$};
\node[below left] at (9.5,.1) {$x_i$};
\node at (8.4,-1.25) {$T^{(i)}$};
\end{tikzpicture}
\caption{Slabs of width~$n$ between~$0$ and~$Kne_1$: the black dots form a minimizing chain, joined by near-geodesics for the slab passage times~$T^{(i)}$ (solid), and the grey dots joined by the dotted line form another chain.}\label{fig:slabs}
\end{figure}

A \emph{near-geodesic} from~$x$ to~$y$ in~$P$ is a rectifiable curve~$\pi$ from~$x$ to~$y$ in~$P$ with~$\int_\pi\rho_P(z)\,|dz|$ at most~$T_P(x,y)+1$. We write~$\Pi_{T_P}(x,y)$ for the set of these curves, and omit ``in~$P$'' when~$P=\R^d$. Modifying the near-geodesics from~$0$ to~$Kne_1$ near the faces shows that
\begin{equation}\label{eq:F}
F\coloneqq\min\sum_{i=1}^KT^{(i)}(x_{i-1},x_i)\qquad\text{satisfies}\qquad\E\Bigl[\exp\Bigl(\frac{|F-T_{Kn}|}{CK\log^2(2+Kn)}\Bigr)\Bigr]\leq2\,,
\end{equation}
where the minimum is over the \emph{chains}~$0=x_0,x_1,\ldots,x_K=Kne_1$ with~$x_i\cdot e_1=in$ and~$|x_i-x_{i-1}-ne_1|\leq n/4$. The error~$K\log^2(2+Kn)$ is much smaller than~$Q_{Kn}$. A \emph{minimizing chain} is one that attains the minimum.

\emph{Step 1 (Section~\ref{sec:exponential}).} We prove~(a) and the following uniform version of~\eqref{eq:hypothesis}. Fix~$s\in[n,Kn]$ and two cubes of side~$(sQ_s)^{\nf12}$ in the faces~$\{x\cdot e_1=0\}$ and~$\{x\cdot e_1=s\}$, whose centers differ by~$se_1$ plus a vector of length at most~$s$ orthogonal to~$e_1$ (Figure~\ref{fig:stepone}). Then
\begin{equation*}
\E\Bigl[\exp\Bigl(\sup_{u,v}\bigl|T(u,v)-|v-u|\bigr|/(CQ_s)\Bigr)\Bigr]\leq2\,,
\end{equation*}
where~$u$ and~$v$ range over the two cubes and~$C=C(d,\rho)$. We need uniformity because the points of a minimizing chain are random. The side~$(sQ_s)^{\nf12}$ is the natural scale for moves orthogonal to~$e_1$: moving an endpoint of~$[0,se_1]$ by~$\lambda(sQ_s)^{\nf12}$ orthogonally to~$e_1$ lengthens the segment by about~$\lambda^2Q_s/2$. To prove the bound for~$s\leq Kn/4$, we combine~\eqref{eq:hypothesis} at dyadic scales by a chaining argument. For larger~$s$, we cut the segment of length~$s$ into a fixed number of slabs and take a union bound over the sequences of cubes visited by a chain. This also gives~(a). The same union bound shows that~$|x_i-x_{i-1}-ne_1|$ is at most of order~$(KnQ_n)^{\nf12}$ for a minimizing chain of~$F$.

\begin{figure}[t]
\centering
\begin{tikzpicture}[x=.85cm,y=.85cm,every node/.style={font=\small}]
\foreach \x in {0,8}{
  \draw[figguide] (\x,-.9)--(\x,3.1);
  \foreach \k in {0,...,3}{\draw[figguide] (\x-.1,-.55+1.1*\k)--(\x+.1,-.55+1.1*\k);}
}
\draw[figguide,dashed,thick] (0,-.15)--(8,2.05);
\foreach \a/\b/\p in {-.45/1.8/0,.2/2.55/120,.45/1.95/240}{
  \draw[gray!60] plot[domain=0:1,samples=90,smooth,variable=\t]
    ({8*\t},{\a+(\b-\a)*\t+.22*sin(180*\t)*sin(720*\t+\p)});
}
\draw[thick] plot[domain=0:1,samples=90,smooth,variable=\t]
  ({8*\t},{-.15+2.2*\t+.25*sin(180*\t)*sin(900*\t)});
\draw[figcube,line width=2.6pt] (0,-.55)--(0,.55) (8,1.65)--(8,2.75);
\fill (0,-.15) circle (1.7pt) (8,2.05) circle (1.7pt);
\node[left] at (-.05,-.15) {$u$};
\node[right] at (8.05,2.05) {$v$};
\draw[decorate,decoration={brace,amplitude=4pt,mirror}] (-.55,.55)--(-.55,-.55);
\node[left] at (-.7,0) {$(sQ_s)^{\nf12}$};
\draw[<->,figguide] (0,3.3)--(8,3.3);
\node[above] at (4,3.3) {$s$};
\end{tikzpicture}
\caption{One slab of width~$s$, its faces cut by tick marks into cubes of side~$(sQ_s)^{\nf12}$: the thick blue segments are two of these cubes, the thick black curve is a near-geodesic from~$u$ to~$v$ with the dashed segment~$[u,v]$, and the grey curves are near-geodesics between other points of the two cubes.}\label{fig:stepone}
\end{figure}

\emph{Step 2 (Section~\ref{sec:localization}).} We prove that near-geodesics stay close to the segment between their endpoints: for two cubes as in Step~1 with~$s=n$, and~$S$ the slab between their faces,
\begin{equation}\label{eq:transverse}
\E\Bigl[\exp\Bigl(\sup_{u,v,\pi,x}\dist(x,[u,v])^2/(CnQ_n)\Bigr)\Bigr]\leq2\,,
\end{equation}
where~$u$ and~$v$ range over the two cubes, $\pi$ over the near-geodesics from~$u$ to~$v$ in~$S$, and~$x$ over the points of~$\pi$. We also prove that, for every~$\eta>0$ and some~$h>0$ depending on~$\eta$,
\begin{equation*}
\E\Bigl[\sup\bigl|T_S(u,v)-|v-u|-T_S(\widetilde u,\widetilde v)+|\widetilde v-\widetilde u|\bigr|^2\Bigr]^{\nf12}\leq\eta Q_n\,,
\end{equation*}
where the supremum is over the points~$u,\widetilde u$ of one cube and~$v,\widetilde v$ of the other with~$|u-\widetilde u|$ and~$|v-\widetilde v|$ at most~$h(nQ_n)^{\nf12}$. For~\eqref{eq:transverse}, a curve that passes at distance~$\lambda(nQ_n)^{\nf12}$ from the segment is longer by order~$\lambda^2Q_n$, more than Step~1 allows when~$\lambda$ is large. To prove the continuity in the endpoints, we reroute the near-geodesics near their endpoints.

With~$n^{\nf12}$ in place of~$Q_n$, transverse bounds like~\eqref{eq:transverse} go back to Newman and Piza~\cite[Theorem~6]{NP95}, in directions in which the limit shape of a lattice model is curved, and to Howard and Newman~\cite[Theorem~2.4]{HN} for Euclidean models, where isotropy makes the limit shape a ball; both obtain transverse fluctuations of order~$n^{\nf34}$, up to a factor~$n^\varepsilon$. Alexander~\cite[Theorems~1.3 and~1.6]{Alexander20} proved uniform versions for isotropic random-graph models with exponential tails at all distances. In~\eqref{eq:transverse}, the scale~$Q_n$ comes instead from~\eqref{eq:hypothesis} at smaller distances.

\emph{Step 3 (Section~\ref{sec:gain}).} We prove~(b) with the following modified Poincar\'e inequality.

\begin{proposition}[{Bena\"im and Rossignol~\cite[Proposition~2.1]{BR}}]\label{prop:BR}
Let~$\alpha\in(0,1]$, and let~$X_1,X_2,\ldots$ be independent random variables, each taking finitely many values, each with probability at least~$\alpha$. Then every square-integrable function~$f$ of~$(X_j)_j$ satisfies
\begin{equation*}
\Var[f]\log\frac{\Var[f]}{\sum_j\E\bigl[|f-\E_j[f]|\bigr]^2}\leq\frac4\alpha\sum_j\E\bigl[(f-\E_j[f])^2\bigr]\,,
\end{equation*}
where~$\E_j[f]$ denotes the expectation of~$f$ with respect to~$X_j$ alone.
\end{proposition}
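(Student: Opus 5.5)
The plan is the semigroup-and-hypercontractivity argument of Talagrand and Falik--Samorodnitsky, in the form used by Bena\"im and Rossignol. We may assume that~$f$ depends on finitely many coordinates (the general case follows by $L^2$ approximation by conditional expectations) and that~$\E[f]=0$.

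\textbf{Semigroup representation of the variance.} Let~$P_t\coloneqq\bigotimes_j\bigl(e^{-t}I+(1-e^{-t})\E_j\bigr)$. Its generator is~$L=\sum_j(\E_j-I)$.
\begin{itemize}
\item Each~$\E_j$ is an orthogonal projection that commutes with~$P_t$.
\item $P_tf\to\E[f]$ in~$L^2$ as~$t\to\infty$.
\end{itemize}
These two facts give
\begin{equation*}
\Var[f]=-\int_0^\infty\frac{d}{dt}\|P_{t/2}f\|_2^2\,dt=\int_0^\infty\sum_j\E\bigl[(f-\E_jf)\,P_t(f-\E_jf)\bigr]\,dt=\int_0^\infty\sum_j\|P_{t/2}g_j\|_2^2\,dt\,,
\end{equation*}
where~$g_j\coloneqq f-\E_jf$. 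Up to the factor~$2$ absorbed into the constant~$4/\alpha$, this splits the variance into the contributions of the individual coordinates.

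\textbf{Hypercontractivity and interpolation.} Every coordinate takes finitely many values, each with probability at least~$\alpha$. The two-point and finite-space log-Sobolev inequalities of Diaconis--Saloff-Coste therefore give the one-coordinate operator~$e^{-t}I+(1-e^{-t})\E_j$ a log-Sobolev constant~$c\leq C_0/\alpha$. One checks that~$c\leq2/\alpha$ suffices for the stated constant, using~$\log(1/\alpha)/(1-2\alpha)\lesssim1/\alpha$. Log-Sobolev constants tensorize, so~$P_t$ inherits the same~$c$. Gross's theorem then gives
\begin{equation*}
\|P_sg\|_2\leq\|g\|_{q(s)}\,,\qquad q(s)=1+e^{-2s/c}\,.
\end{equation*}
Next we interpolate between~$L^1$ and~$L^2$ by H\"older:
\begin{equation*}
\|g\|_q^2\leq\|g\|_1^{2\theta}\|g\|_2^{2(1-\theta)}\,,\qquad\theta=\frac{2}{q}-1\,.
\end{equation*}
Write~$\epsilon_j\coloneqq\|g_j\|_1^2$ and~$V_j\coloneqq\|g_j\|_2^2$. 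Then
\begin{equation*}
\Var[f]\leq\sum_j\int_0^\infty V_j\,(\epsilon_j/V_j)^{\theta(t/2)}\,dt\,.
\end{equation*}

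\textbf{Integration in~$t$.} Substitute~$u=e^{-t/c}$, so that~$\theta\approx u/(2-u)$ behaves like~$u$. The integral becomes
\begin{equation*}
\int_0^1 (\epsilon_j/V_j)^{\sim u}\,\frac{c\,du}{u}\,.
\end{equation*}
The difficulty is the divergence at~$u=0$, which corresponds to small~$t$. To handle it:
\begin{enumerate}
\item Split the time integral at a time~$t_0$, to be chosen.
\item For~$t\leq t_0$, use only the trivial contraction bound~$\|P_{t/2}g_j\|_2^2\leq V_j$. This contributes~$t_0\sum_jV_j$.
\item For~$t>t_0$, bound~$\theta(t/2)\geq e^{-t/c}/2$ and use that~$\epsilon_j\leq V_j$.
\end{enumerate}
This yields
\begin{equation*}
\Var[f]\leq t_0\sum_jV_j+\frac{c}{\log(V/\epsilon)}\sum_jV_j\,,
\end{equation*}
where we compare~$\Var[f]$ with~$\sum_j\epsilon_j$ via the convexity (Jensen) step
\begin{equation*}
\sum_jV_j\,(\epsilon_j/V_j)^{\theta}\leq\Bigl(\sum_jV_j\Bigr)^{1-\theta}\Bigl(\sum_j\epsilon_j\Bigr)^{\theta}\,.
\end{equation*}
Choosing~$t_0$ comparable to~$c/\log(\Var[f]/\sum_j\epsilon_j)$ gives
\begin{equation*}
\Var[f]\log\frac{\Var[f]}{\sum_j\epsilon_j}\lesssim c\sum_jV_j\,.
\end{equation*}
This is the claimed inequality once the constants are tracked. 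The case~$\Var[f]\leq e\sum_j\epsilon_j$ is trivial, since then the logarithm is at most~$1$ and Efron--Stein already gives the bound.

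\textbf{Main obstacle.} The main obstacle is the precise log-Sobolev (hypercontractive) constant for a finite space with minimal atom~$\alpha$, together with its tensorization. That step fixes the factor~$4/\alpha$. I would take it from the known bound~$c\leq\log(1/\alpha-1)/(1-2\alpha)\leq2/\alpha$ rather than reprove it.
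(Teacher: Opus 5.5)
Your semigroup route has a genuine gap at the time integration. After hypercontractivity and H\"older, the integrand $V_j(\epsilon_j/V_j)^{\theta(t/2)}$ is at least $\epsilon_j$, because $\epsilon_j\leq V_j$ and $\theta\in[0,1]$. It also tends to $\epsilon_j$ as $t\to\infty$, because $q(s)\to1$ and so $\theta\to1$. Hence $\int_0^\infty$ is infinite, and your bound on $\Var[f]$ is vacuous.

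With $u=e^{-t/c}$ one has $\theta=(1-u)/(1+u)$, not $u/(2-u)$, and $u\to0$ means $t\to\infty$. So the divergence is at large times, not small ones. Your split at $t_0$ treats the harmless end, the tail $\int_{t_0}^\infty$ is still infinite, and the bound $t_0\sum_jV_j+c\sum_jV_j/\log(V/\epsilon)$ does not follow.

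What is missing is the spectral-gap decay that hypercontractivity throws away. Since $\E_j[g_j]=0$, the $j$th factor of $P_s$ multiplies $g_j$ by $e^{-s}$. Write $P_s=P_{s/2}P_{s/2}$. Apply hypercontractivity to the outer factor. For the inner one, the product of the other factors is a Markov operator and hence contracts $L^q$. Together these give $\|P_sg_j\|_2\leq e^{-s/2}\|g_j\|_{q(s/2)}$. With this factor, your H\"older step over $j$ gives $\Var[f]\leq2W\int_0^\infty e^{-s}(\epsilon/W)^{\theta(s/2)}\,ds$, where $W\coloneqq\sum_jV_j$ and $\epsilon\coloneqq\sum_j\epsilon_j$. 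This is of order $cW/\log(W/\epsilon)$, and $\Var[f]\leq W$ then yields the inequality.

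The constant you get this way is proportional to $c$, and nothing shows it equals $4/\alpha$. Reaching $4/\alpha$ by this route needs the sharp finite-space log-Sobolev constant and explicit bookkeeping. That includes the normalization: if $\operatorname{Ent}[g^2]\leq c\sum_j\E[(g-\E_j[g])^2]$, Gross's theorem gives $q(s)=1+e^{-4s/c}$, not $1+e^{-2s/c}$. Writing ``once the constants are tracked'' does not supply this.

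For comparison, the paper does not reprove the proposition. It cites Bena\"im and Rossignol, whose hypothesis is the one-coordinate inequality $\operatorname{Ent}[g^2]\leq(4/\alpha)\Var[g]$, and checks that hypothesis in one line. The bound $\log x\leq x-1$ gives $\operatorname{Ent}[g^2]\leq\Var[g^2]/\E[g^2]$, and $g^2\leq\E[g^2]/\alpha$ gives $\Var[g^2]\leq4\E[g^2]\Var[g]/\alpha$. So the step you call the main obstacle is elementary.

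Their proof, following Falik--Samorodnitsky and Rossignol, has no time integral at all:
\begin{enumerate}
\item Write $f-\E[f]=\sum_kV_k$ as a sum of martingale differences.
\item Jensen's inequality gives $\operatorname{Ent}[V_k^2]\geq\E[V_k^2]\log(\E[V_k^2]/\E[|V_k|]^2)$. With the log-sum inequality this gives $\sum_k\operatorname{Ent}[V_k^2]\geq\Var[f]\log(\Var[f]/\sum_k\E[|V_k|]^2)$.
\item Note that $\E[|V_k|]\leq\E[|f-\E_k[f]|]$.
\item Bound each $\operatorname{Ent}[V_k^2]$ by tensorization of entropy and the one-coordinate inequality.
\item The identity $\sum_k\E[\Var_i[V_k]]=\E[(f-\E_i[f])^2]$ then gives the right side with exactly the constant $4/\alpha$.
\end{enumerate}
That route is shorter and loses nothing. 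Yours, once repaired, gives a Talagrand-type bound whose constant you would still have to compute.
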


Bena\"im and Rossignol assume the logarithmic Sobolev inequality~$\operatorname{Ent}[g^2]\leq(4/\alpha)\Var[g]$ for the functions~$g$ of each~$X_j$, where~$\operatorname{Ent}[g^2]\coloneqq\E[g^2\log g^2]-\E[g^2]\log\E[g^2]$. It holds here, since~$g^2\leq\E[g^2]/\alpha$ gives~$\operatorname{Ent}[g^2]\leq\Var[g^2]/\E[g^2]\leq4\Var[g]/\alpha$.

Let~$L$ be a large integer, and let~$G$ range over the \emph{blocks}
\begin{equation*}
[(i-1)n,in]\times(nQ_n)^{\nf12}\bigl(z+[0,L)^{d-1}\bigr)\,,\qquad1\leq i\leq K,\ z\in L\Z^{d-1}\,.
\end{equation*}
We apply Proposition~\ref{prop:BR} with variables~$X_G$ that approximate the normalized passage times~$(T^{(i)}(u,v)-n)/Q_n$ across~$G$. Each~$X_G$ is a function of~$\omega|_G$ with finitely many values, each of probability at least a constant~$\alpha_L>0$ independent of~$n$, and~$\E_G$ is the expectation with respect to~$X_G$ alone. By Step~2, changing~$\omega|_G$ changes~$F$ only if, before or after the change, a segment~$[x_{i-1},x_i]$ of a minimizing chain passes within distance of order~$(nQ_n)^{\nf12}$ of~$G$. Proposition~\ref{prop:BR} is useful only if~$\sum_G\E[|f-\E_G[f]|]^2$ is much smaller than~$\sum_G\E[(f-\E_G[f])^2]$. We cannot prove this for~$F$: the blocks containing~$0$ and~$Kne_1$ always matter, and elsewhere we would need a lower bound on transverse fluctuations.

Following the averaging trick of Benjamini, Kalai and Schramm~\cite[Lemma~3]{BKS}, we therefore apply Proposition~\ref{prop:BR} to the average of~$F$ over~$\lceil K^{\nf23}\rceil$ translates, whose slabs and endpoints are shifted by~$mL(nQ_n)^{\nf12}e_2$ for~$0\leq m<\lceil K^{\nf23}\rceil$ (see also~\cite[Lemma~5.6]{BR} and~\cite[Section~3.1]{DHS}). By Steps~1 and~2, for each block~$G$, the near-geodesics joining the points of a minimizing chain of~$F$ meet on average at most order~$K^{\nf12}$ of the blocks~$G+mL(nQ_n)^{\nf12}e_2$ with~$m\in\Z$ (Figure~\ref{fig:averaging}, left), so, by stationarity, each block is met by the near-geodesics of at most order~$K^{\nf12}$ of the translates on average (Figure~\ref{fig:averaging}, right). Let~$\overline F$ be this average and~$f\coloneqq\E[\overline F\mid(X_G)_G]$; by Steps~1 and~2, the conditional variance~$\E[\Var[\overline F\mid(X_G)_G]]$ is small. Resampling~$\omega|_G$ changes~$f$ only through the translates whose near-geodesics meet~$G$, a fraction of order~$K^{\nf12}/K^{\nf23}=K^{-\nf16}$ of them, so the two sums in Proposition~\ref{prop:BR} satisfy
\begin{equation*}
\sum_G\E\bigl[(f-\E_G[f])^2\bigr]\leq CKQ_n^2\qquad\text{and}\qquad\sum_G\E\bigl[|f-\E_G[f]|\bigr]^2\leq CK^{-\nf16}KQ_n^2\,.
\end{equation*}
Proposition~\ref{prop:BR} then gives~$\Var[f]\leq CKQ_n^2/(\alpha\log K)$.

Since the shifts are much smaller than~$(KnQ_{Kn})^{\nf12}$, the continuity in the endpoints of Step~2, at the distance~$Kn$, and~\eqref{eq:F} show that~$\overline F$ differs from~$T_{Kn}$ by a small multiple of~$Q_{Kn}$ in mean square, which gives~(b).

\begin{figure}[t]
\centering
\begin{tikzpicture}[x=1cm,y=.9cm,every node/.style={font=\small}]
\begin{scope}
\foreach \k in {1,3,5,7}{\fill[gray!12] (0,.5*\k) rectangle (2.6,.5*\k+.5);}
\foreach \k in {3,4,5}{\fill[oisky!45] (0,.5*\k) rectangle (2.6,.5*\k+.5);}
\foreach \k in {2,...,5}{\draw[white,line width=.8pt] (0,.5*\k)--(2.6,.5*\k);}
\foreach \x in {0,2.6}{\draw[figguide] (\x,0)--(\x,4);}
\draw[very thick] plot[domain=0:1,samples=80,smooth,variable=\t] ({2.6*\t},{1.65+1.2*\t+.12*sin(540*\t)});
\fill (0,1.65) circle (1.6pt) (2.6,2.85) circle (1.6pt);
\node[left] at (-.05,1.65) {$x_{i-1}$};
\node[right] at (2.65,2.85) {$x_i$};
\draw[<->,figguide] (0,4.3)--(2.6,4.3); \node[above] at (1.3,4.3) {$n$};
\end{scope}
\begin{scope}[xshift=5.5cm]
\foreach \k in {1,3,5,7}{\fill[gray!12] (0,.5*\k) rectangle (2.6,.5*\k+.5);}
\fill[fighit] (0,2.5) rectangle (2.6,3); \draw[fighigh,thick] (0,2.5) rectangle (2.6,3);
\foreach \x in {0,2.6}{\draw[figguide] (\x,0)--(\x,4);}
\foreach \ya/\yb/\c/\w/\p in {.1/.7/gray!55/.6pt/0,.9/.35/gray!55/.6pt/40,1.15/1.95/gray!55/.6pt/80,2.2/1.45/gray!55/.6pt/160,3.7/3.95/gray!55/.6pt/280,1.9/3.35/black/1.2pt/120,2.85/3.65/black/1.2pt/200,3.3/2.15/black/1.2pt/240}{
  \draw[\c,line width=\w] plot[domain=0:1,samples=60,smooth,variable=\t] ({2.6*\t},{\ya+(\yb-\ya)*\t+.08*sin(180*\t)*sin(540*\t+\p)});
  \fill[\c] (0,\ya) circle (1.3pt) (2.6,\yb) circle (1.3pt);
}
\draw[<->,figguide] (0,4.3)--(2.6,4.3); \node[above] at (1.3,4.3) {$n$};
\end{scope}
\end{tikzpicture}
\caption{One slab of width~$n$ cut into blocks (grey and white bands): on the left, a near-geodesic from~$x_{i-1}$ to~$x_i$ crosses it and meets the three blue blocks; on the right, near-geodesics of~$\lceil K^{\nf23}\rceil=8$ translates of~$F$, with~$0$ and~$Kne_1$ shifted by multiples of the block side, cross it, and the three black ones meet the orange block.}\label{fig:averaging}
\end{figure}

\emph{Step 4 (Section~\ref{sec:completion}).} We prove~(c) by showing that~$\E[T_s]-s\leq Q_s/100$ for~$s=Kn/4$, which gives~(c) by subadditivity. We place~$\lceil s\rceil$ slabs of width~$s$ in a row, let~$F$ be the minimum~\eqref{eq:F} for this row, and bound~$\E[F]-s\lceil s\rceil$ from both sides:
\begin{equation*}
\lceil s\rceil\Bigl(\frac{\E[T_s]-s}{1+\sup\rho}-C\kappa Q_s\Bigr)\leq\E[F]-s\lceil s\rceil\leq C\lceil s\rceil\log^2(2+s)\,.
\end{equation*}
For the lower bound, isotropy and subadditivity give~$\E[T^{(i)}(u,v)]\geq s+(\E[T_s]-s)/(1+\sup\rho)$ for all points~$u$ and~$v$ on the faces of the~$i$th slab. By Steps~1--3, $\E[\sup_{u,v}|T^{(i)}(u,v)-\E[T^{(i)}(u,v)]|]\leq\kappa Q_s$, with~$\kappa$ small, where~$u$ and~$v$ range over two cubes of side~$(sQ_s)^{\nf12}$ in the faces of the~$i$th slab. By Step~1, a minimizing chain moves by order~$(sQ_s)^{\nf12}$ per slab on average, so along it the fluctuations add up to at most~$C\kappa\lceil s\rceil Q_s$ in mean. The upper bound follows from~\eqref{eq:F} for this row and the bound~$\E[T_t]-t\leq C(t\log(2+t))^{\nf12}$ of Howard and Newman~\cite[Lemmas~4.1 and~4.2]{HN} at~$t=s\lceil s\rceil$. Since~$\log^2(2+s)$ is much smaller than~$Q_s$, the two bounds force~$\E[T_s]-s\leq Q_s/100$ when~$\kappa$ is small.

In isotropic random-graph models with exponential tails at all distances, Alexander~\cite[Theorem~1.5]{Alexander20} bounds the nonrandom fluctuations by a large constant times the fluctuation scale. The induction needs the much smaller constant~$\nf1{100}$, but only at one distance, and the small variance of Step~3 provides it.

\subsection{Notation and conventions}\label{ssec:notation}

We use the notation of~\cite[(0.41)--(0.42)]{AKM} for exponential bounds and its properties from~\cite[Appendix~A]{AKM}. For~$s,\theta>0$ and a random variable~$X$, we write
\begin{equation*}
\begin{aligned}
X\leq\Os_s(\theta)&\quad\iff\quad\E\bigl[\exp\bigl((X_+/\theta)^s\bigr)\bigr]\leq2\,,\qquad X_+\coloneqq\max\{X,0\}\,,\\
X\leq Y+\Os_s(\theta)\ \text{ and }\ Y\geq X-\Os_s(\theta)&\quad\iff\quad X-Y\leq\Os_s(\theta)\,.
\end{aligned}
\end{equation*} If~$X\leq\Os_s(\theta)$, then~$\Pp[X\geq\theta x]\leq2e^{-x^s}$ for~$x\geq0$ and~$\E[X_+^p]\leq C(p,s)\theta^p$ for~$p\geq1$. If~$X_j\leq\Os_s(\theta)$ for~$j\leq J$, then
\begin{equation}\label{eq:union}
\max_{j\leq J}X_j\leq\Os_s\bigl(C(s)\theta\log^{\nf1s}(2+J)\bigr)\,.
\end{equation}
If~$s\geq1$ and~$0\leq X_j\leq\Os_s(\theta_j)$ for every~$j$, then
\begin{equation}\label{eq:sum}
\sum_jX_j\leq\Os_s\Bigl(\sum_j\theta_j\Bigr)\,.
\end{equation}

All constants and thresholds may depend on~$d$ and the law of the density, and we indicate only further dependence, as in~$C(L)$. The constants~$c,C>0$ may change from line to line. We write~$[u,v]$ for the segment from~$u$ to~$v$, and~$\angle(x,y)\in[0,\pi]$ for the angle between nonzero vectors~$x$ and~$y$. We write~$e_1,\ldots,e_d$ for the standard basis of~$\R^d$ and~$\mathbf 1_E$ for the indicator of an event~$E$.

\subsection{Formalization}\label{ssec:formalization}

All the results of this paper are formalized in the Lean~4 proof assistant, with the Mathlib library, in the repository
\begin{center}
\repourl
\end{center}
It also contains the companion paper~\cite{ABlattice} on lattice first-passage percolation, with the formalization of its main theorem, and checked lists of the errors and gaps found in~\cite{BSS23} and~\cite{BSS}.

\subsection{Use of artificial intelligence}\label{ssec:ai}

This paper grew out of a project, begun on October~21, 2026, to formalize the two papers of Basu, Sidoravicius and Sly~\cite{BSS23,BSS} in the Lean~4 proof assistant with the help of AI agents. Checking their arguments in the detail that a formalization requires, the agents found a number of points that needed correction. Many have local corrections, but we did not obtain a corrected proof. While searching for another route, we found a much simpler argument, which is the one presented here and applies in every dimension. Its induction on scales comes from the work of Basu, Sidoravicius and Sly, and the present paper would not exist without their pioneering work.

An eight-page proof for a planar model was obtained by GPT-6 Astra agents, and its extension to every dimension by a GPT-6.1 Sol agent. An initial draft of the present paper was written by Claude Opus~5.5 under our direction, and the authors subsequently heavily revised and rewrote this draft.
The formalization described in Section~\ref{ssec:formalization} was written with the help of AI agents.

\subsection*{Acknowledgments}

S.~A. acknowledges support from the European Research Council (ERC) under the European Union's Horizon Europe research and innovation programme, grant agreement number 101200828.

\section{Preliminaries}\label{sec:prelim}

This section collects classical bounds on passage times and near-geodesics, the comparison of slab passage times with~$T$, and a bound for sums of independent arrays along a sequence. Throughout,~$\rho$ satisfies Assumption~\ref{ass:local} and, as in Section~\ref{ssec:overview}, $\mu=1$.

\begin{lemma}\label{lem:classical}
For every~$t\geq1$,
\begin{equation}\label{eq:classical}
|T_t-\E[T_t]|\leq\Os_2(Ct^{\nf12})\qquad\text{and}\qquad
0\leq\E[T_t]-t\leq C(t\log(2+t))^{\nf12}\,.
\end{equation}
\end{lemma}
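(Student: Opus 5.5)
For the first bound in~\eqref{eq:classical}, I would follow Talagrand's concentration argument, adapted to the continuum. Discretize the noise into the independent restrictions~$\omega|_{z+[0,1)^d}$ for~$z\in\Z^d$. Fix a deterministic path class. A near-geodesic from~$0$ to~$te_1$ has length at most~$T_t/\inf\rho\leq Ct$, since~$\rho$ is bounded and~$T_t\leq t\sup\rho$. So it meets at most~$Ct$ unit cells.

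By locality, changing~$\omega$ on one cell changes~$\rho$ only within distance one of that cell. Rerouting the near-geodesic around the cell, or simply using the bounds on~$\rho$, shows that the change of~$T_t$ is at most a constant times the indicator that the near-geodesic (before or after the change) passes within distance~$2$ of the cell. This is the certificate structure needed for Talagrand's convex-distance inequality, or equivalently for the entropy method of Boucheron--Lugosi--Massart for self-bounding-type functions. The sum over cells of the squared influences is at most~$C\cdot\#\{\text{cells met}\}\leq Ct$, deterministically. The bounded-differences entropy inequality (Theorem~6.7 of BLM, ``$\sum_j(f-f^{(j)})_+^2\leq Ct$'') then gives sub-Gaussian tails in both directions with variance proxy~$Ct$. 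That is exactly~$|T_t-\E T_t|\leq\Os_2(Ct^{1/2})$.

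The one subtlety is that each cell's noise is an infinite-dimensional variable (white noise or Poisson). The entropy method only needs independence across blocks, not finiteness, so this causes no difficulty. The bounded range of~$\rho$ makes each influence bounded by a constant. The upper tail is the easier direction, via the sup of the influence; for the lower tail I would use the symmetric version with the minimizer after resampling.

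For the second bound, the lower inequality~$\E[T_t]\geq t$ is subadditivity. The sequence~$\E[T_t]$ is subadditive up to an additive constant, since~$T(0,(s+t)e_1)\leq T(0,se_1)+T(se_1,(s+t)e_1)$, and stationarity gives Fekete's lemma with~$\mu=1$ as the infimum.

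For the upper bound I would follow Howard--Newman (or Alexander's approximation lemma). Let~$g(t)=\E T_t$. The key step is to show~$g(2t)\geq 2g(t)-C(t\log t)^{1/2}$, which iterated gives~$g(t)-t\leq C(t\log t)^{1/2}$ by summing a geometric series of errors~$\sum_k (2^{-k}t\log t)^{1/2}$, after dividing appropriately.

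To prove this near-superadditivity, take a near-geodesic from~$0$ to~$2te_1$ and let~$y$ be its first point on the sphere of radius about~$t$ around~$0$ (or the hyperplane~$x\cdot e_1=t$). Then~$T_{2t}\geq T(0,y)+T(y,2te_1)$. The point~$y$ is random, so discretize the candidate points into a net of polynomially many points at unit spacing within distance~$Ct$. Using the first bound and the union bound~\eqref{eq:union} with~$J\leq Ct^{d}$, every~$T(0,y)$ and~$T(y,2te_1)$ is at least its mean minus~$C(t\log t)^{1/2}$ simultaneously.

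Rotational invariance gives~$\E T(0,y)=g(|y|)$ and~$\E T(y,2te_1)=g(|2te_1-y|)$. We have~$|y|+|2te_1-y|\geq2t$, and~$g$ is nondecreasing up to constants (compare with~$g(t)\geq t$ and~$g$ Lipschitz). This yields~$g(2t)\geq 2g(t)-C(t\log t)^{1/2}$, up to handling the asymmetric split, which can be absorbed by choosing the sphere of radius exactly~$t$.

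I expect this last geometric step, the comparison~$g(a)+g(b)\geq 2g(t)$ when~$a+b\geq2t$, to be the main obstacle. With~$a=t$ it reduces to~$g(b)\geq g(t)-C$ for~$b\geq t$. That holds because~$g$ is monotone up to an additive constant: a curve to distance~$b$ crosses the sphere of radius~$t$.
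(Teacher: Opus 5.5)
Your route is the paper's: the entropy method of Boucheron, Lugosi and Massart for the concentration, and the Howard--Newman doubling inequality $g(2t)\geq2g(t)-C(t\log(2+t))^{\nf12}$, iterated, for the mean. However, two steps fail as written.

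First, only the squared \emph{increases} $\sum_j(T_t^{(j)}-T_t)_+^2$ are bounded by $Ct$ deterministically, where $T_t^{(j)}$ is computed after resampling the $j$th cell. They are all certified by one curve, the near-geodesic of the original configuration. The decreases are certified by the near-geodesic after resampling, which changes with $j$. So your sum over the curves ``before or after the change'' has no deterministic bound, and getting a tail from ``the minimizer after resampling'' does not work. In fact the directions are reversed. The bound on the increases gives the \emph{lower} tail of $T_t$ directly, by the sub-Gaussian bound applied to $-T_t$. The upper tail needs the companion inequality of Boucheron, Lugosi and Massart, which combines this same bound with the bounded differences $T_t-T_t^{(j)}\leq C$; this is why the paper cites two of their results. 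Talagrand's convex-distance inequality, which you also mention, works with this single certificate: it gives $\Pp[T_t\leq m]\,\Pp[T_t\geq m+u]\leq e^{-u^2/(Ct)}$ for every $m$, hence both tails.

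Second, your claim that $g$ is monotone up to an additive constant is not justified. That a curve to distance $b\geq t$ crosses the sphere of radius $t$ only gives $g(b)\geq\E[\min_{|z|=t}T(0,z)]$, and nothing bounds $g(t)-\E[\min_{|z|=t}T(0,z)]$ by a constant. Combined with the first bound, a union bound over a unit net of the sphere and the Lipschitz continuity of $T$ in its endpoints, the crossing does give $g(b)\geq g(t)-C(t\log(2+t))^{\nf12}$. That is all the doubling inequality needs, so your argument closes with this weaker statement. You can also bypass monotonicity. A curve from $0$ to $2te_1$ first reaches the sphere $\{|z|=t\}$ no later than it last visits $\{|z-2te_1|=t\}$, so
\[
T_{2t}\geq\min_{|z|=t}T(0,z)+\min_{|z'-2te_1|=t}T(z',2te_1)-1\,,
\]
and both minima have mean at least $g(t)-C(t\log(2+t))^{\nf12}$.
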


\begin{proof}
Both tails of the first bound follow from the inequalities of Boucheron, Lugosi and Massart~\cite[Corollary~3 and Theorem~7]{BLM03}, since resampling~$\omega$ in one unit cube changes~$T_t$ by at most~$C$, and the squares of the increases of~$T_t$ sum over the cubes to at most~$Ct$. The second bound follows from the first as in Howard and Newman~\cite[Lemmas~4.1 and~4.2]{HN}.
\end{proof}

Lemma~\ref{lem:classical} implies that, away from its endpoints, a near-geodesic lies in narrow cones about the segment between them, uniformly over the endpoints in a ball.

\begin{lemma}\label{lem:straight}
Let~$r\geq1$. There exists a random variable~$W$ with~$\log(2+r)\leq W\leq\Os_2(C\log(2+r))$ such that, for all points~$a$ and~$b$ in the ball of radius~$r$ around~$0$, every point~$x$ of a near-geodesic from~$a$ to~$b$ with~$|x-a|\geq CW^2$ satisfies~$\angle(x-a,b-a)\leq\nf1{10}$.
\end{lemma}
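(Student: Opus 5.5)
We prove Lemma~\ref{lem:straight} by building from Lemma~\ref{lem:classical} a single random variable~$W$ that controls $|T(u,v)-|v-u||$ simultaneously for all pairs of points in a neighbourhood of the ball, and then comparing the cost of a near-geodesic that leaves the cone with the cost of the straight path.

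\emph{Step 1: a uniform bound.} Cover the ball of radius~$3r$ by a unit grid~$\Lambda$. For grid points~$p,q$ with $t=|p-q|\geq1$, Lemma~\ref{lem:classical} and stationarity and rotational invariance give
\begin{equation*}
|T(p,q)-|q-p||\leq\Os_2(Ct^{\nf12})+C(t\log(2+t))^{\nf12}\,.
\end{equation*}
Dividing by $t^{\nf12}$ and taking a union bound~\eqref{eq:union} over the at most $(Cr)^{2d}$ pairs yields a random~$W\geq\log(2+r)$ with $W\leq\Os_2(C\log(2+r))$ such that
\begin{equation*}
|T(p,q)-|q-p||\leq W\,(1+|p-q|)^{\nf12}\qquad\text{for all grid pairs.}
\end{equation*}
Here we use that $\log^{\nf12}(2+r^{2d})\leq C\log^{\nf12}(2+r)$. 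The factor $\log^{\nf12}(2+t)$ from the mean is then absorbed into~$W$. Since $\rho$ is bounded, moving either endpoint to a nearest grid point costs at most~$C$. The same bound therefore holds, with $W$ replaced by $CW$, for all $u,v$ in the ball of radius~$3r$. Near-geodesics between points of the ball of radius~$r$ have length at most $C r$, but they could exit the ball of radius~$3r$. To handle this, we would enlarge the ball to radius $Cr$ with $C=\sup\rho/\inf\rho$ and grid that ball instead.

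\emph{Step 2: the cone argument.} Let $\pi$ be a near-geodesic from~$a$ to~$b$ and $x\in\pi$ with $|x-a|=\ell$. Subadditivity along~$\pi$ gives
\begin{equation*}
T(a,x)+T(x,b)\leq T(a,b)+1\,.
\end{equation*}
By Step~1 this becomes
\begin{equation*}
\ell+|b-x|-|b-a|\leq CW\bigl(\ell^{\nf12}+|b-x|^{\nf12}+|b-a|^{\nf12}+1\bigr)\,.
\end{equation*}
If $\angle(x-a,b-a)=\theta>\nf1{10}$, elementary geometry bounds the excess $\ell+|b-x|-|b-a|$ below by $c\theta^2\min(\ell,|b-x|)$, and more precisely by roughly $\ell(1-\cos\theta)$ when $\ell\leq|b-a|$.

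The difficulty is that the right-hand side contains $|b-a|^{\nf12}$, which can be much larger than $\ell^{\nf12}$ when $\ell$ is small, so the naive comparison would only give the cone at distances $\ell\gtrsim W(|b-a|)^{\nf12}$. This is the step I expect to be the main obstacle. To get a bound that depends on~$W^2$ alone, I would use the difference $|b-x|-|b-a|$ rather than the two terms separately. That is, I would apply the uniform concentration to the increments $T(x,b)-T(a,b)$, or equivalently choose a point $y\in\pi$ beyond~$x$ at distance about $M\ell$ from~$a$ for a large constant~$M$ and apply the bound to the triple $a,x,y$ only.

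Concretely, $\pi$ restricted from $a$ to $y$ is still a near-geodesic, so
\begin{equation*}
T(a,x)+T(x,y)\leq T(a,y)+1\,.
\end{equation*}
All three distances are at most $C M\ell$, so the error is $CW(M\ell)^{\nf12}$. The angle excess at~$x$ in the triangle $a,x,y$ is at least $c\ell$ if $\angle(x-a,y-a)\geq\nf1{20}$. This forces $\ell\leq CW^2$. Otherwise the angle at $a$ between $x$ and $y$ is small, and iterating along the geometric sequence of scales $M^k\ell$ up to $b$, where the last step uses $y=b$, shows $\angle(x-a,b-a)\leq\nf1{20}+\sum_k\theta_k$. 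Since $\theta_k\lesssim(W^2/(M^k\ell))^{\nf12}$ is summable, this gives $\angle(x-a,b-a)\leq\nf1{10}$ once $\ell\geq CW^2$.
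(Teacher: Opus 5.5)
Your proof is correct and is essentially the paper's. The paper also builds a uniform bound $|T(x,y)-|y-x||\leq C(W|y-x|^{\nf12}+W^2)$ on a ball of radius $Cr$ from Lemma~\ref{lem:classical}, a union bound over integer points and the Lipschitz continuity of $T$, and then runs the Howard--Newman straightness argument; your iteration along the near-geodesic at geometrically growing distances is an explicit version of that argument. One harmless slip: the excess bound $M^k\ell\,(1-\cos\theta_k)\leq CW(M^{k+1}\ell)^{\nf12}$ gives $\theta_k\leq C_M(W^2/(M^k\ell))^{\nf14}$, not the square root you wrote, but this is still geometrically summable, so your conclusion for $\ell\geq CW^2$ is unaffected.
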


\begin{proof}
We use Lemma~\ref{lem:classical} to make the straightness argument of Howard and Newman~\cite[Theorem~2.6 and Lemma~2.7]{HN} uniform in the endpoints. The near-geodesics between points of the ball lie in the ball of radius~$Cr$ around~$0$. Let~$W\geq\log(2+r)$ be the smallest number such that~$|T(x,y)-|y-x||\leq C(W|y-x|^{\nf12}+W^2)$ for all~$x$ and~$y$ in this larger ball. By Lemma~\ref{lem:classical}, isotropy, the union bound~\eqref{eq:union} over the pairs of integer points in it, and~$|T(x,y)-T(x',y')|\leq C(|x-x'|+|y-y'|)$, we have~$W\leq\Os_2(C\log(2+r))$. With this bound in place of their fluctuation bound, their argument proves the lemma.
\end{proof}

The proof uses the minimum~\eqref{eq:F} for several numbers of slabs. For a positive integer~$M$ and~$s>2$, let~$F$ be the minimum~\eqref{eq:F} over the chains across the~$M$ slabs~$[(i-1)s,is]\times\R^{d-1}$, with~$\rho^{(i)}$ and~$T^{(i)}$ the~$\rho_P$ and~$T_P$ of the~$i$th slab. For~$\tau\in\R^d$, let~$F_\tau$ be defined by~\eqref{eq:F} with the slabs, the endpoints and the points~$x_i$ translated by~$\tau$. The next lemma compares slab passage times with~$T$, and proves the comparison of~$F$ with~$T_{Ms}$ stated in Section~\ref{ssec:overview}.

\begin{lemma}\label{lem:slab}
Let~$P$ be a slab of width~$s>2$, and let~$z\in\R^d$. Then
\begin{equation}\label{eq:slab}
0\leq\sup_{x,y}\bigl(T_P(x,y)-T(x,y)\bigr)\leq\Os_1(C\log^2(2+s))\,,
\end{equation}
where~$x$ and~$y$ range over the points on different faces of~$P$ with~$|x-z|\leq s$ and~$|y-x|\leq5^{\nf12}\,s$. Moreover,
\begin{equation}\label{eq:columns}
0\leq F-T_{Ms}\leq\Os_1(CM\log^2(2+Ms))\,,
\end{equation}
and the same bound holds for~$F_\tau-T(\tau,\tau+Mse_1)$, for every~$\tau\in\R^d$.
\end{lemma}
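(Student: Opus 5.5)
The lower bounds are immediate. Since $\rho_P\geq\rho$ on $P$ and the curves in $P$ form a subset of all curves, $T_P(x,y)\geq T(x,y)$. Similarly, concatenating near-optimal curves for the $T^{(i)}$ of any chain gives a curve from $0$ to $Mse_1$ whose $\rho$-integral is at most $\sum_iT^{(i)}(x_{i-1},x_i)$, so $F\geq T_{Ms}$. The substance is in the upper bounds, which I would prove by surgery on a near-geodesic of $T$.

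\emph{Proof of \eqref{eq:slab}.} Fix $x,y$ on different faces of $P$ as in the statement, and let $\pi$ be a near-geodesic of $T$ from $x$ to $y$. Apply Lemma~\ref{lem:straight} in the ball of radius $r=Cs$ around $z$, which contains all such $x,y$ and their near-geodesics; this gives a random $W\leq\Os_2(C\log(2+s))$. Since $|y-x|\leq5^{\nf12}s$ and $y-x$ crosses the slab, the angle between $y-x$ and the normal $e$ of $P$ is bounded away from $\pi/2$. Indeed $\cos\angle\geq s/(5^{\nf12}s)=5^{-\nf12}$, so the angle is at most $\arctan 2<\pi/2-1/10$ with some room. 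Hence the points of $\pi$ with $|w-x|\geq CW^2$ lie in a cone about $y-x$ of aperture $1/10$. Their normal coordinates increase linearly, so they lie inside $P$ and at distance at least $c|w-x|$ from the face through $x$. By the symmetric statement at $y$, they are also at distance at least $c|w-y|$ from the other face. Thus $\pi$ stays in the interior region where $\rho_P=\rho$, except within distance $C'W^2$ of its endpoints, and also within distance $CW^2$ of $x$ or $y$ it may exit $P$.

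I would therefore replace the initial and final pieces of $\pi$ as follows. Let $x'$ be the first point of $\pi$ at distance $C''W^2$ from $x$, with $C''$ chosen so that $x'$ is at distance more than $1$ from $\partial P$, and define $y'$ symmetrically. The new curve consists of the segment $[x,x']$, then $\pi$ between $x'$ and $y'$, then $[y',y]$. The segments lie in $P$ by convexity, and their cost is at most $\sup\rho\cdot C''W^2$ each. The middle part stays at distance more than $1$ from $\partial P$, so its $\rho_P$-cost equals its $\rho$-cost, which is at most $T(x,y)+1$. Hence
\[
T_P(x,y)-T(x,y)\leq CW^2+1,
\]
uniformly over $x,y$. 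Since $W\leq\Os_2(C\log(2+s))$, we have $W^2\leq\Os_1(C\log^2(2+s))$, which gives \eqref{eq:slab}.

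One point needs care: whether the cone property yields distance more than $1$ from the face, rather than merely positive distance. This requires $C''W^2\geq C$, which holds because $W\geq\log(2+r)\geq\log 2$. Another point is that the middle part never returns near a face. This holds because every point $w$ of it satisfies both cone conditions, from $x$ and from $y$, and one of $|w-x|$, $|w-y|$ is at least $|y-x|/2\geq s/2$; this should give distance at least $c\min(|w-x|,|w-y|)\geq cC''W^2$ from both faces. I expect this geometric bookkeeping to be the main obstacle, along with choosing $z$ and the ball so that Lemma~\ref{lem:straight} applies uniformly.

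\emph{Proof of \eqref{eq:columns}.} Take a near-geodesic $\pi$ from $0$ to $Mse_1$, and let $x_i$ be its first hitting points of the hyperplanes $\{x\cdot e_1=is\}$. By Lemma~\ref{lem:straight} in the ball of radius $Ms$ around $0$, these points satisfy $|x_i-x_{i-1}-se_1|\leq s/4$ once $s$ exceeds $CW^2$, so the chain is admissible. When $s\leq CW^2$, the trivial bound $F\leq CMs\leq CMW^2$ suffices. The pieces of $\pi$ between consecutive hyperplanes have total cost at most $T_{Ms}+1$, and by subadditivity of near-geodesics each piece has cost at most $T(x_{i-1},x_i)+1$. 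Applying \eqref{eq:slab} to each slab, with $z=x_{i-1}$ and with $|x_i-x_{i-1}|\leq5^{\nf12}s$ guaranteed by the chain condition, gives
\[
F\leq T_{Ms}+M+\sum_i\Os_1(C\log^2(2+s)).
\]
The sum of these $M$ terms is handled by \eqref{eq:sum}, giving $F-T_{Ms}\leq\Os_1(CM\log^2(2+Ms))$. Because the $x_i$ are random, I would apply \eqref{eq:slab} in its uniform form, with the supremum over $x,y$ near a deterministic grid point $z$, for $z$ ranging over the $C(Ms)^{d}$ grid points of the big ball. The union bound \eqref{eq:union} then costs a factor $\log(2+Ms)$, which is absorbed because the $\Os_1$ tail can be paid in $\log^2(2+Ms)$. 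Finally, the statement for $F_\tau$ follows by translation invariance.
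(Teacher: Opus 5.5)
Your route is the paper's: surgery at the ends of a near-geodesic, using the cones of Lemma~\ref{lem:straight}, for \eqref{eq:slab}, and the first hitting points of a near-geodesic from~$0$ to~$Mse_1$ for \eqref{eq:columns}. The gap is in the last step of \eqref{eq:columns}. There you handle the randomness of the~$x_i$ by applying the statement \eqref{eq:slab} at deterministic grid points~$z$ and taking a union bound. By \eqref{eq:union}, the maximum of~$J$ variables that are each at most~$\Os_1(C\log^2(2+s))$ is only at most~$\Os_1(C\log^2(2+s)\log(2+J))$. With~$J$ polynomial in~$Ms$, your sum of~$M$ terms is therefore~$\Os_1(CM\log^2(2+s)\log(2+Ms))$. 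This is not~$\Os_1(CM\log^2(2+Ms))$ with~$C$ independent of~$M$ and~$s$. For~$M=\lceil s\rceil$, the case used in Lemma~\ref{lem:smallmean}, it is of order~$M\log^3(2+s)$ rather than~$M\log^2(2+s)$. Even a grid of spacing~$s$, which is enough since \eqref{eq:slab} is uniform over~$|x-z|\leq s$, leaves a factor~$\log(2+M)$. So the claimed absorption is false: you prove a~$\log^3$ bound, which happens to suffice later in the paper but is not the lemma.

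The fix, which is what the paper does, is to use the proof of \eqref{eq:slab} rather than its statement. Take a single random variable~$W$ from Lemma~\ref{lem:straight} for the ball of radius~$CMs$ around~$0$; this ball contains the near-geodesic from~$0$ to~$Mse_1$ and hence all the~$x_i$. That lemma is already uniform over endpoints in the ball. So, when~$s\geq CW^2$, your surgery gives~$T^{(i)}(x,y)\leq T(x,y)+CW^2$ simultaneously for all~$i$ and all admissible pairs~$x,y$ in that ball. Hence~$F\leq T_{Ms}+1+CMW^2$, and~$W^2\leq\Os_1(C\log^2(2+Ms))$ concludes with no union bound. The same~$W$ gives admissibility of the chain: applying Lemma~\ref{lem:straight} to the subarc from~$x_{i-1}$ to~$Mse_1$ and to the reversed curve gives~$\angle(x_i-x_{i-1},e_1)\leq\nf15$, and~$\tan\nf15<\nf14$.

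Two smaller points in \eqref{eq:slab}. First, the case~$s<CW^2$ must be treated separately, where the segment~$[x,y]$ gives~$T_P(x,y)\leq Cs\leq CW^2$; otherwise~$x'$ and~$y'$ need not exist in the right order. Second, take~$x'$ to be the last point of~$\pi$ within distance~$CW^2$ of~$x$, and~$y'$ the first later point within distance~$CW^2$ of~$y$. This makes automatic your claim that the whole middle part satisfies both cone conditions.
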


\begin{proof}
By stationarity, we may assume that~$z=0$ and~$\tau=0$. The lower bounds follow from~$T_P\geq T$, which holds since~$\rho_P\geq\rho$ on~$P$, and from the triangle inequality. Let~$W$ be as in Lemma~\ref{lem:straight}, with~$r\coloneqq4s$ for~\eqref{eq:slab} and~$r\coloneqq CMs$ for~\eqref{eq:columns}, so that~$W^2\leq\Os_1(C\log^2(2+r))$.

For~\eqref{eq:slab}, suppose first that~$s\geq CW^2$. Since~$|y-x|\leq5^{\nf12}\,s$, the segment~$[x,y]$ makes an angle at most~$\arctan2$ with the normal of~$P$. Hence, by Lemma~\ref{lem:straight} from~$x$ and from~$y$, the points of a near-geodesic from~$x$ to~$y$ at distance at least~$CW^2$ from~$x$ and~$y$ lie at distance more than one from~$\partial P$. Replacing its two end pieces by segments gives~$T_P(x,y)\leq T(x,y)+CW^2$. If~$s<CW^2$, the segment~$[x,y]$ gives the same bound. This proves~\eqref{eq:slab}.

For~\eqref{eq:columns}, suppose first that~$s\geq CW^2$. Let~$x_i$ be the first point of a near-geodesic from~$0$ to~$Mse_1$ on the face~$\{x\cdot e_1=is\}$, for~$0<i<M$, and let~$x_0\coloneqq0$ and~$x_M\coloneqq Mse_1$. Since subarcs of near-geodesics are near-geodesics, Lemma~\ref{lem:straight}, applied to the subarc from~$x_{i-1}$ to~$Mse_1$ and to the reversed curve, gives~$\angle(x_i-x_{i-1},e_1)\leq\nf15$. Since~$\tan\nf15<\nf14$, the~$x_i$ are admissible in~\eqref{eq:F}. The bound~$T_P\leq T+CW^2$ in each slab then gives~$F\leq T_{Ms}+1+CMW^2$. If~$s<CW^2$, the points~$ise_1$ give~$F\leq CMW^2$. This proves~\eqref{eq:columns}.
\end{proof}

Moment bounds for fixed indices do not directly bound a sum along a random sequence of indices. The next lemma allows random sequences, at the cost of their squared increments.

\begin{lemma}\label{lem:polynomialarray}
Let~$M\geq1$ be an integer, and let~$Z_1,\ldots,Z_M$ be independent random arrays of nonnegative random variables~$Z_i(j,k)$, indexed by~$j,k\in\Z^{d-1}$, with~$\E[Z_i(j,k)^{4d}]\leq1$ for all~$i,j,k$. Then, with~$k_0\coloneqq0$ and a constant~$C$ depending only on~$d$,
\begin{equation*}
\E\Bigl[\sup_{k_1,\ldots,k_M\in\Z^{d-1}}\Bigl\{\sum_{i=1}^MZ_i(k_{i-1},k_i)-\sum_{i=1}^M|k_i-k_{i-1}|^2\Bigr\}\Bigr]\leq CM\,.
\end{equation*}
\end{lemma}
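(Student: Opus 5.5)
The plan is to decouple the random path from the random weights with a \emph{per-step} penalty, so that no global union bound over paths is needed. A global union bound over paths cannot work here: with only $4d$ moments, the entropy of paths is exponential in~$M$ while the tails are only polynomial. So I would reduce to a supremum over a single step, with the starting point $k_{i-1}$ kept free but penalized by an amount that is affordable because it is paid out of the quadratic increments.

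First, the one-step bound. For fixed $i$ and fixed $j$, let $Y_i(j)\coloneqq\sup_{k}\bigl(Z_i(j,k)-\tfrac12|k-j|^2\bigr)_+$. By Markov's inequality with the $4d$-th moment and a union bound over~$k$,
\begin{equation*}
\Pp[Y_i(j)>t]\leq\sum_k\Pp\bigl[Z_i(j,k)>t+\tfrac12|k-j|^2\bigr]\leq\sum_k\bigl(t+\tfrac12|k-j|^2\bigr)^{-4d}\leq Ct^{-4d+\frac{d-1}2}.
\end{equation*}
So $Y_i(j)$ has uniformly bounded moments of order $p$ for any $p<\frac{7d+1}2$, in particular of order $3d$. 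The remaining half of the penalty, $\tfrac12\sum_i|k_i-k_{i-1}|^2$, is reserved to control \emph{where} the path is.

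Second, I would localize each step. Tile $\Z^{d-1}$ by unit-scale boxes and, for each step, replace the random index $k_{i-1}$ by a supremum over a box around it of bounded size. Then sort paths according to the box sequence and compare $\sum_iY_i(k_{i-1})$ with $\tfrac12\sum_i|k_i-k_{i-1}|^2$. I would organize this as a backward dynamic programme: set $U_M\equiv0$ and
\begin{equation*}
U_{i-1}(j)\coloneqq\sup_k\bigl\{Z_i(j,k)-|k-j|^2+U_i(k)\bigr\}.
\end{equation*}
The quantity to bound is then $\E[U_0(0)]$. The inductive statement I would try is a bound on $\E[U_{i-1}(j)]$ of the form $C(M-i+1)$, uniform in~$j$, together with a control on the spatial oscillation of $U_i$. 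The oscillation bound is what absorbs the supremum over~$k$ in the recursion, using the quadratic penalty and the independence of $Z_i$ from $U_i$, since $U_i$ depends only on $Z_{i+1},\ldots,Z_M$.

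The main obstacle is precisely this oscillation control. A naive approach, taking the maximum of $Y_i$ over a ball of radius $R$ visited by the whole path, costs a factor $R^{1/3}$ per step. Balancing this against the global penalty $R^2/M$ only gives $M^{7/5}$, not $CM$. So the localization must be per step: the relevant box has to move with the path and have size of order one, up to a term $|k_i-k_{i-1}|^2$. Independence of $Z_i$ from $(Z_l)_{l>i}$ has to be used at each step to avoid a maximum over many boxes. What I would try first is to condition on $Z_{i+1},\ldots,Z_M$, and to bound $\sup_k\{Z_i(j,k)-|k-j|^2+U_i(k)-U_i(j)\}$ in expectation by $C$ plus a contribution absorbed by the quadratic penalty, using the moment tail of $Y_i$ above together with summability in $k$ of $(1+|k-j|^2)^{-4d}$ against the number of lattice points.
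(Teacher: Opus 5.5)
\textbf{There is a genuine gap: your proposal stops exactly where the difficulty lies.} The one-step reduction to $Y_i(j)=\sup_k(Z_i(j,k)-\frac12|k-j|^2)_+$ is correct. But it does not simplify the problem. Bounding $\E\sup_{k_1,\ldots,k_M}\{\sum_iY_i(k_{i-1})-\frac12\sum_i|k_i-k_{i-1}|^2\}$ is the same last-passage problem, now with a one-index array. Neither the box localization nor the dynamic programme is actually carried out.

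In the dynamic programme, $Z_i\geq0$ gives
\[
U_{i-1}(j)-U_i(j)\;\geq\;\sup_k\{U_i(k)-|k-j|^2\}-U_i(j)\,.
\]
So conditioning on $Z_{i+1},\ldots,Z_M$ cannot avoid bounding this ``hole-filling'' quantity. That is a spatial-regularity statement about the value function of an $(M-i)$-step problem, and nothing you propose provides it:
\begin{itemize}
\item Your induction hypothesis, an upper bound on $\E[U_{i-1}(j)]$, says nothing about differences $U_i(k)-U_i(j)$.
\item The arrays are not stationary, and entries within each $Z_i$ may be arbitrarily dependent, so there is no symmetry to exploit.
\item Controlling it by a union bound over $k$, as in your one-step estimate, turns a tail $t^{-p}$ into $t^{-p+(d-1)/2}$. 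That can be iterated only boundedly many times, not $M$ times.
\end{itemize}
As written, the ``oscillation control'' is an unproved and hard claim, not a step.

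\textbf{The missing idea is that a global union bound does work once it is organized by order statistics; this is the paper's proof.} Your objection applies only to a union bound over whole paths.

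First peel dyadically in the penalty. Let $S(k)$ and $P(k)$ be the two sums, and set $V_r\coloneqq\max\{S(k):P(k)\leq2^rM\}$. Then
\[
\E\sup_k(S-P)\leq\E[V_0]+\sum_{r\geq1}\E[V_r^2]/(2^{r-1}M)\,,
\]
so it suffices to show $\E[V_r^2]^{1/2}\leq C2^{r/4}M$.

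Next write $V_r\leq\sum_{a=1}^MY_a$, where $Y_a$ is the maximum, over paths with $P(k)\leq2^rM$, of the $a$th largest entry along the path. The event $Y_a\geq\lambda$ requires only $a$ entries that are at least $\lambda$, at $a$ triples $(i,k_{i-1},k_i)$ with distinct $i$.
\begin{itemize}
\item Since $\sum_i|k_i-k_{i-1}|\leq(MP(k))^{1/2}\leq2^{r/2}M$, there are at most $(eM/a)^a(C2^{r/2}M/a)^{2a(d-1)}$ such sets of triples.
\item Because the $a$ entries lie in distinct independent arrays, Markov's inequality gives probability at most $\lambda^{-4da}$ for each set.
\end{itemize}
The polynomial tail is thus raised to the power $a$. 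The exponent $4d$ is what lets it absorb the entropy factor, which is of order $(M/a)^{(2d-1)a}2^{r(d-1)a}$ up to $C^a$. This yields
\[
\Pp[Y_a\geq\lambda]\leq\bigl(C2^{r/4}(M/a)^{1/2}/\lambda\bigr)^{4da}\,,
\]
hence $\E[Y_a^2]^{1/2}\leq C2^{r/4}(M/a)^{1/2}$. Summing over $a$ by Minkowski's inequality gives $\E[V_r^2]^{1/2}\leq C2^{r/4}M$. No per-step localization or regularity of a value function is needed.
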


\begin{proof}
For~$k=(k_1,\ldots,k_M)$ in~$(\Z^{d-1})^M$, with~$k_0=0$, let
\begin{equation*}
S(k)\coloneqq\sum_{i=1}^MZ_i(k_{i-1},k_i)\qquad\text{and}\qquad P(k)\coloneqq\sum_{i=1}^M|k_i-k_{i-1}|^2\,,
\end{equation*}
so that the lemma bounds~$\E[\sup_k(S(k)-P(k))]$. Let~$V_r\coloneqq\max\{S(k):P(k)\leq2^rM\}$ for each integer~$r\geq0$. If~$2^{r-1}M<P(k)\leq2^rM$, then~$S(k)-P(k)\leq V_r-2^{r-1}M$. Hence
\begin{equation*}
\E\Bigl[\sup_k\bigl(S(k)-P(k)\bigr)\Bigr]\leq\E[V_0]+\sum_{r\geq1}\E\bigl[(V_r-2^{r-1}M)_+\bigr]\leq\E[V_0]+\sum_{r\geq1}\frac{\E[V_r^2]}{2^{r-1}M}\,,
\end{equation*}
and it suffices to show that~$\E[V_r^2]^{\nf12}\leq C2^{\nf r4}M$ for every~$r$.

Fix~$r$. For each~$k$, we write~$Z_{(1)}(k)\geq\cdots\geq Z_{(M)}(k)$ for the entries~$Z_i(k_{i-1},k_i)$ in decreasing order. For~$1\leq a\leq M$, let~$Y_a\coloneqq\max\{Z_{(a)}(k):P(k)\leq2^rM\}$, so that~$V_r\leq\sum_aY_a$. For~$\lambda>0$, if~$Y_a\geq\lambda$, then~$Z_{(a)}(k)\geq\lambda$ for some~$k$ with~$P(k)\leq2^rM$, so~$a$ of the entries~$Z_i(k_{i-1},k_i)$ are at least~$\lambda$. These entries occupy~$a$ triples~$(i,k_{i-1},k_i)$, which are determined by their indices~$i$ and by~$2a$ displacements in~$\Z^{d-1}$ of total length at most~$\sum_i|k_i-k_{i-1}|\leq(MP(k))^{\nf12}\leq2^{\nf r2}M$. Hence the set of these~$a$ triples takes at most~$(eM/a)^a(C2^{\nf r2}M/a)^{2a(d-1)}$ values. For each such set, the~$a$ entries at its triples lie in distinct arrays, so, by independence and Markov's inequality for their~$4d$th moments, they are all at least~$\lambda$ with probability at most~$\lambda^{-4da}$. A union bound over these sets therefore gives
\begin{equation*}
\Pp[Y_a\geq\lambda]\leq\Bigl(\frac{eM}a\Bigr)^a\Bigl(\frac{C2^{\nf r2}M}a\Bigr)^{2a(d-1)}\lambda^{-4da}\leq\Bigl(\frac{C2^{\nf r4}(M/a)^{\nf12}}\lambda\Bigr)^{4da}\,,
\end{equation*}
so~$\E[Y_a^2]^{\nf12}\leq C2^{\nf r4}(M/a)^{\nf12}$, and summing over~$a$ by Minkowski's inequality gives~$\E[V_r^2]^{\nf12}\leq C2^{\nf r4}M$.
\end{proof}
\section{The induction}\label{sec:induction}

We prove Theorem~\ref{thm:general} by induction on the length. For an integer~$K\geq10$ and~$A\geq1$, let~$Q_s\coloneqq As^{\nf12-\nf1K}$ for~$s>0$, as in Section~\ref{ssec:overview}. For a length~$N\geq2$, the \emph{induction hypothesis} is
\begin{equation}\label{eq:premise}
|T_s-s|\leq\Os_1(Q_s)\qquad\text{for }1\leq s\leq N/2\,,
\end{equation}
which is~\eqref{eq:hypothesis} for these~$s$, and the conclusion of the induction step is
\begin{equation}\label{eq:improvement}
|T_N-N|\leq\Os_1(Q_N/2)\,.
\end{equation}

\begin{proposition}[Induction step]\label{prop:induction}
There exist an integer~$K\geq10$ and~$N_0\geq2$ such that~\eqref{eq:premise} implies~\eqref{eq:improvement} for every~$A\geq1$ and every~$N\geq N_0$.
\end{proposition}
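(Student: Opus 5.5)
Write $N=Kn$ with $n\coloneqq N/K$. Since $K\geq10$, all distances $s\leq N/2$ used below are covered by~\eqref{eq:premise}. The plan follows the overview of Section~\ref{ssec:overview}. We establish the three estimates (a), (b), (c) at distance $N$ with constants independent of $A$. Then we combine them by Cauchy--Schwarz and the elementary properties of $\Os_1$.

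\emph{Step (a).} First, I prove the uniform version of~\eqref{eq:premise} over pairs of cubes of side $(sQ_s)^{\nf12}$ in opposite faces, for $s\in[n,N]$. For $s\leq N/4$ I chain dyadically. Each point of a cube is reached from the cube's center through a sequence of dyadic subcubes. The increments $T(u,v)-T(u',v')$ between consecutive generations are bounded by passage times over distances $\leq s$, each of size $\Os_1(Q_{s'})$ for the appropriate smaller $s'$, together with the geometric correction $|v-u|-|v'-u'|$. The union bound~\eqref{eq:union} costs a logarithm at each generation. This logarithm is absorbed because $Q_{s'}/Q_s$ decays geometrically in the generation.

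For $s>N/4$ I cut into a bounded number of slabs of width $\leq N/2$ and use Lemma~\ref{lem:slab}. I take a union bound over the sequences of cubes visited by a chain. The quadratic penalty for transverse displacement, $\lambda^2Q$ per slab, pays for the entropy of these sequences. This gives (a) in the form $\E[\exp(8|T_N-\E T_N|/Q_N)]\leq B$, and it also shows that minimizing chains of $F$ move by $O((nQ_n)^{\nf12})$ per slab. Lemma~\ref{lem:polynomialarray} is the tool for sums along such random chains: the arrays are the normalized slab fluctuations indexed by cubes, and the penalty $|k_i-k_{i-1}|^2$ is the geometric excess.

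\emph{Step 2.} Next I derive the transverse bound~\eqref{eq:transverse}. A near-geodesic at distance $\lambda(nQ_n)^{\nf12}$ from $[u,v]$ is longer by $\gtrsim\lambda^2Q_n$, and the uniform bound of Step (a), applied at its two halves, contradicts this for large $\lambda$. I also derive continuity in the endpoints at scale $h(nQ_n)^{\nf12}$. For this I reroute near the endpoints using Lemma~\ref{lem:straight}, with the uniform bound at small scales $s\ll n$.

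\emph{Step (b).} This is the main obstacle, and the step I expect to require the most care. I follow the overview. Discretize $\omega$ on blocks $G$ into variables $X_G$ with finitely many values, each of probability $\geq\alpha_L$. I average $F$ over $\lceil K^{\nf23}\rceil$ translates by $mL(nQ_n)^{\nf12}e_2$ and let $f=\E[\overline F\mid(X_G)]$. Then I bound the two sums in Proposition~\ref{prop:BR}. The Efron--Stein sum is $\leq CKQ_n^2$. The $L^1$-sum is $\leq CK^{5/6}Q_n^2$: by Step 2 and stationarity, only $O(K^{\nf12})$ translates see a given block.

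The delicate point is controlling $\E[\Var[\overline F\mid(X_G)]]$. The discretization error must be a small multiple of $Q_n^2$ per slab, and for this I would use the continuity estimate of Step 2 with $L$ large. Proposition~\ref{prop:BR} then gives $\Var f\leq CKQ_n^2/\log K$. Endpoint continuity at distance $N$ together with~\eqref{eq:columns} transfers this to $T_N$. Since $KQ_n^2=K^{2/K}Q_N^2$, this gives $\Var T_N\leq\delta_KQ_N^2$ with $\delta_K\to0$.

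\emph{Step (c).} I show $\E T_s-s\leq Q_s/100$ at $s=N/4$. I use a row of $\lceil s\rceil$ slabs, as in Step 4 of the overview. The lower bound on $\E F$ comes from isotropy and subadditivity together with the small fluctuation $\kappa Q_s$ obtained from (b) and Lemma~\ref{lem:polynomialarray}. The upper bound comes from Lemma~\ref{lem:classical} at $t=s\lceil s\rceil$. Subadditivity then gives $\E T_N-N\leq Q_N/25$.

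\emph{Combination.} Cauchy--Schwarz between (a) and (b) gives $\E[e^{4|T_N-\E T_N|/Q_N}]\leq1+4(B\delta_K)^{\nf12}$. Together with (c) and $\E T_N\geq N$, this yields $\E[e^{2|T_N-N|/Q_N}]\leq2$ once $K$ is large. I choose $K$ first and then $N_0$ so that the logarithmic errors are $\ll Q_N$. These choices are uniform in $A\geq1$, since every error term is either proportional to $Q$ or independent of $A$ and hence smaller.
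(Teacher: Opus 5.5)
\paragraph{Overall.} Your architecture is the paper's: (a) comes from the union bound over chains (Lemma~\ref{lem:parent}), (b) from Proposition~\ref{prop:BR} applied to an average over $\lceil K^{\nf23}\rceil$ translates (Proposition~\ref{prop:gain}), and (c) from a row of $\lceil s\rceil$ slabs (Lemma~\ref{lem:smallmean}). These are combined by Cauchy--Schwarz as in the paper's final argument.

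\paragraph{The gap: uniformity in $A$.} Your closing claim that the choices are uniform in $A\geq1$, ``since every error term is either proportional to $Q$ or independent of $A$'', is false. Every geometric step needs the transverse scale $(sQ_s)^{\nf12}$ to be small compared with $s$. This includes:
\begin{itemize}
\item the expansion $|v-u|\geq s+cQ_s|k-j|^2-CQ_s$;
\item the bounds $|v-u|\leq5^{\nf12}s$ and $(nQ_n)^{\nf12}|k-j|\leq n/2$;
\item the admissibility condition $|x_i-x_{i-1}-ne_1|\leq n/4$;
\item the condition $CL(nQ_n)^{\nf12}\leq n/2$ for the blocks.
\end{itemize}
The errors there are of order $Q_s^2/s$, not $Q_s$. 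For $A\gtrsim N^{\nf12+\nf1K}$ the cubes are wider than the slabs, so your thresholds on $N$ depend on $A$. That is fatal: to deduce Theorem~\ref{thm:general}, $A$ is chosen large depending on $N_0$, to cover $1\leq s\leq N_0$.

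The missing step is this. By \eqref{eq:classical}, $|T_N-N|\leq\Os_1(C(N\log(2+N))^{\nf12})$, so \eqref{eq:improvement} holds outright if $Q_N\geq2C(N\log(2+N))^{\nf12}$. Otherwise \eqref{eq:regime} gives $Q_s/s\leq C(\beta_1)N^{-\nf12}\log^{\nf12}(2+N)$ for $s\in[\beta_1N,N]$, uniformly in $A$. All thresholds then depend only on $K$.

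\paragraph{Step (a): the chaining.} Chaining through dyadic subcubes of the face does not work. Moving an endpoint within its face by $2^{-j}(sQ_s)^{\nf12}$ costs a passage time of that order. Since the face is transverse to $v-u$, the correction $|v-u|-|v-u'|$ recovers at most a fixed fraction of it, so the increments add up to order $(sQ_s)^{\nf12}\gg Q_s$.

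Lemma~\ref{lem:tree} instead chains from a point at distance $t$ behind the box, through the points $p(D)=c-2^{-j}te_1$. Each step then has length of order $2^{-j}t$ and excess at most $C2^{-j}Q_t$. This gives only upper bounds; lower bounds come from $T(u,v)\geq T(a',b')-T(a',u)-T(v,b')$ with $a',b'$ on the axis.

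\paragraph{Step 2: transverse bound and continuity.} For \eqref{eq:localization}, splitting the curve at its far point $z$ into two halves needs lower bounds on $T(u,z)$ with $|z-u|$ possibly much smaller than $n$, outside the range of (a). The paper instead extends the curves to fixed far points $a,b$ on the axis and uses the same device.

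In the rerouting for continuity, Lemma~\ref{lem:straight} cannot place the crossing of the hyperplane at distance $\gamma n$ within $\gamma^{-1}(nQ_n)^{\nf12}$ of the axis, since its angle bound allows an offset of $\gamma n/10$. You need \eqref{eq:localization} there.

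\paragraph{Step (b): conditional variance and the choice of $L$.} Continuity alone does not make $\E[\Var[\overline F\mid(X_G)_G]]$ small. Where the tube of a chain segment comes near a block boundary, the labels $X_G$ do not approximate its passage time, and resampling given them changes it by order $Q_n$. The paper averages the variance decomposition over uniform shifts $a_i$ of the block grids, which is allowed since $\overline F$ does not depend on them. This makes such segments occur with probability $O(L^{-\nf12})$ (Lemma~\ref{lem:residual}).

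Also, Proposition~\ref{prop:BR} gives $\Var[f]\leq CKQ_n^2/(\alpha_L\log K)$, not $CKQ_n^2/\log K$. Since $\alpha_L\to0$, $L$ must be coupled to $K$ so that $L\to\infty$, $\alpha_L\log K\to\infty$ and $L\leq K^{\nf1{24}}$. The last condition keeps the shifts $\lceil K^{\nf23}\rceil L(nQ_n)^{\nf12}$ small compared with $(KnQ_{Kn})^{\nf12}$.
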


\begin{proof}[Proof of Theorem~\ref{thm:general}, given Proposition~\ref{prop:induction}]
This is immediate by Proposition~\ref{prop:induction} and induction on the length, with~$\varepsilon=\nf2K$ and~$\mu A$ as the constant of Theorem~\ref{thm:general}, since~$|T_s-s|\leq Cs$ gives~$|T_s-s|\leq\Os_1(Q_s)$ for~$s\in[1,N_0]$ once~$A$ is large, depending on~$K$.
\end{proof}

The rest of the paper proves Proposition~\ref{prop:induction}. By~\eqref{eq:classical}, $|T_N-N|\leq\Os_1(C(N\log(2+N))^{\nf12})$, which gives~\eqref{eq:improvement} if~$Q_N\geq2C(N\log(2+N))^{\nf12}$. We may therefore assume that, with this constant~$C$,
\begin{equation}\label{eq:regime}
Q_N\leq2C(N\log(2+N))^{\nf12}\,.
\end{equation}
From now on, we assume~\eqref{eq:premise} and~\eqref{eq:regime}.
\section{Passage times between cubes}\label{sec:exponential}

In this section we extend the induction hypothesis~\eqref{eq:premise} in two ways, for arbitrary numbers~$0<\beta_1\leq\beta_2$: to bounds uniform over the pairs of points in two cubes on opposite faces of a slab of width~$s\in[\beta_1N,N]$, and to lengths~$t\in[\beta_1N,\beta_2N]$. For~$s>2$, a rigid motion~$g$, numbers~$\sigma,\sigma'\in[0,4(sQ_s)^{\nf12}]$ and~$y\in\R^{d-1}$, let
\begin{equation}\label{eq:cubepair}
\begin{gathered}
I\coloneqq g\bigl(\{0\}\times[-\sigma/2,\sigma/2]^{d-1}\bigr)\,,\qquad I'\coloneqq g\bigl(\{s\}\times(y+[-\sigma'/2,\sigma'/2]^{d-1})\bigr)\,,\\
S\coloneqq g\bigl([0,s]\times\R^{d-1}\bigr)\,.
\end{gathered}
\end{equation}

We first extend~\eqref{eq:premise} from points to boxes by chaining over dyadic subboxes. A \emph{box} is the image under a rigid motion of a product of~$d$ compact intervals, some of which may be single points.

\begin{lemma}\label{lem:tree}
For~$16\leq t\leq N/2$ and every box~$B$ centered at~$te_1$, of diameter at most~$t/8$, contained in the~$16(d\,tQ_t)^{\nf12}$-neighborhood of the line~$\R e_1$,
\begin{equation}\label{eq:tree}
\sup_{z\in B}\{T(0,z)-z\cdot e_1\}\leq\Os_1(CQ_t)\,.
\end{equation}
By stationarity and isotropy, the same holds for~$T(x,z)-(z-x)\cdot e$ over boxes centered at~$x+te$ contained in the~$16(d\,tQ_t)^{\nf12}$-neighborhood of~$x+\R e$, for every~$x\in\R^d$ and unit vector~$e$.
\end{lemma}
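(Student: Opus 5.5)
We plan a chaining argument over a dyadic tree of subboxes of $B$, using the induction hypothesis \eqref{eq:premise} at scale $t$ for the root point and the triangle inequality $T(0,z)\leq T(0,z')+T(z',z)$ to pass from a box center to points of finer subboxes.

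\emph{Root.} Let $z_0\coloneqq te_1$ be the center of $B$. By \eqref{eq:premise} at $s=t\leq N/2$, $T(0,z_0)-t\leq\Os_1(Q_t)$, and $z_0\cdot e_1=t$.

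\emph{Dyadic levels.} For $k\geq1$, partition $B$ into boxes of diameter $\operatorname{diam}(B)2^{-k}$. Each level-$k$ box has a parent at level $k-1$, and there are at most $C2^{kd}$ boxes at level $k$. Let $w_k$ denote the center of the level-$k$ box containing $z$, and write $\ell_k\coloneqq|w_k-w_{k-1}|\leq t2^{-k}$. Then
\begin{equation*}
T(0,z)-z\cdot e_1\leq\bigl(T(0,z_0)-t\bigr)+\sum_{k\geq1}\bigl(T(w_{k-1},w_k)-(w_k-w_{k-1})\cdot e_1\bigr)+\bigl(T(w_{k_*},z)-(z-w_{k_*})\cdot e_1\bigr),
\end{equation*}
where we stop at the level $k_*$ at which $\ell_{k_*}\leq1$, so that the last term is $O(1)$ by boundedness of $\rho$.

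\emph{Each increment.} The step $w_k-w_{k-1}$ has length $\ell_k$ but may be transverse to $e_1$, so we compare against $|w_k-w_{k-1}|$ rather than against $(w_k-w_{k-1})\cdot e_1$. By isotropy, \eqref{eq:premise} at $s=\ell_k$ gives $T(w_{k-1},w_k)-\ell_k\leq\Os_1(Q_{\ell_k})$. The deficit $\ell_k-(w_k-w_{k-1})\cdot e_1$ is at most $\ell_k$ in general, which is too large. To control it, we would not decompose $B$ isotropically. Instead we use anisotropic dyadic boxes with $e_1$-extent $t2^{-k}$ and transverse extent $\approx(t2^{-k}Q_{t2^{-k}})^{1/2}$, so that each step has transverse part of order $(\ell Q_\ell)^{1/2}$ and the Pythagorean deficit is $O(Q_\ell)$. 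The transverse width of $B$, which is at most $16(dtQ_t)^{1/2}$, matches the root scale. At finer levels the boxes are refined faster in the transverse directions, and we arrange the tree so that each child is joined to its parent along a direction making the deficit $O(Q_\ell)$. More precisely, we chain from $0$ via intermediate points in the $e_1$ direction, using \eqref{eq:premise} to take $T(0,w)$ for a point $w$ slightly behind each center. Then each increment is $\Os_1(CQ_{\ell_k})$.

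\emph{Union and summation.} At level $k$ there are at most $C2^{Ckd}$ increments, so by \eqref{eq:union} the maximum over them is $\Os_1(CQ_{t2^{-k}}\,k)$. Since $Q_{t2^{-k}}\leq Q_t2^{-k(1/2-1/K)}$ and $K\geq10$, the sequence $k2^{-k/3}$ is summable. By \eqref{eq:sum}, which applies after taking positive parts, the total is $\Os_1(CQ_t)$.

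The main obstacle is the geometry of the anisotropic tree: every parent--child step must stay within the parabolic cone where the linear functional $(\cdot)\cdot e_1$ is within $O(Q_\ell)$ of the Euclidean length, while the number of boxes per level still grows only exponentially in $k$. The diameter bound $t/8$ also forces $e_1$-displacements of up to $t/8$, which are handled at the first levels with $\ell\sim t$.
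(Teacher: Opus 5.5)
Your level-by-level bookkeeping (union bound via \eqref{eq:union}, the decay $Q_{2^{-k}t}\leq2^{-\nf{2k}5}Q_t$, summation via \eqref{eq:sum}) matches the paper. However, there is a genuine gap: the step you call ``the main obstacle'' is the actual content of the lemma, and your sketch of it does not work.

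Chaining through centers of nested subboxes of $B$ cannot be rescued by choosing the subboxes anisotropically. The box $B$ is given, and it may be degenerate in the $e_1$ direction. This is exactly the case of the face cubes $I,I'$ to which the lemma is applied in Proposition~\ref{prop:profiles}. In that case all centers lie in one hyperplane orthogonal to $e_1$, so every step $w_k-w_{k-1}$ is purely transverse. Its deficit $|w_k-w_{k-1}|-(w_k-w_{k-1})\cdot e_1$ then equals its length. When $B$ has transverse side of order $(tQ_t)^{\nf12}$, as in that application, these deficits sum to order $(tQ_t)^{\nf12}\gg Q_t$. Subboxes of $e_1$-extent $t2^{-k}$ do not even exist in such a $B$.

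Your alternative, bounding $T(0,w)$ by \eqref{eq:premise} at points $w$ slightly behind the centers, also fails if taken literally. Such terms fluctuate on the scale $Q_t$, not $Q_{2^{-k}t}$, so the union bound over the $\sim2^{kd}$ boxes of level $k$ gives $\Os_1(CkQ_t)$, and $k$ must reach order $\log t$. Finally, a root step to $te_1$ is incompatible with any backward shift, since a later step with negative $e_1$-component has a deficit comparable to its length.

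The missing idea is to shift each center backward by its generation scale, not slightly, and to start the chain at the origin. The paper sets $p(D)\coloneqq c_D-2^{-j}te_1$ for a subbox $D$ of generation $j$ with center $c_D$, so that $p(B)=0$. It then chains through $p(D_0),p(D_1),\ldots$ along the nested boxes containing $z$. For a child $D'\subseteq D$, the step $p(D')-p(D)$ is exactly $2^{-j-1}$ times the vector from $0$ to a vertex of $B$. Because $B$ has diameter at most $t/8$ and lies within $16(d\,tQ_t)^{\nf12}$ of $\R e_1$, this step has length in $[\frac{15}{32}2^{-j}t,2^{-j}t]$ and deficit at most $C2^{-j}Q_t$. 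This is where both hypotheses on $B$ are used. With this choice, ordinary dyadic subboxes suffice and no anisotropic tree is needed. Stopping at the largest $j_0$ with $2^{-j_0}t\geq4$ keeps every step of length at least one, as \eqref{eq:premise} requires, and leaves $|z-p(D_{j_0})|\leq9$, so the last term is $O(1)$.
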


\begin{proof}
For an integer~$j\geq0$, the \emph{dyadic subboxes} of~$B$ of generation~$j$ are the~$2^{dj}$ boxes obtained by cutting each edge of~$B$ into~$2^j$ intervals of equal length. For such a subbox~$D$ with center~$c$, let~$p(D)\coloneqq c-2^{-j}te_1$, so that~$p(B)=0$. Let~$\mathcal A_j$ be the set of the~$2^{d(j+1)}$ pairs~$(p(D),p(D'))$, where~$D$ has generation~$j$ and~$D'\subseteq D$ has generation~$j+1$. For~$(y,w)\in\mathcal A_j$, the vector~$w-y$ is~$2^{-j-1}$ times the vector from~$0$ to a vertex of~$B$, so
\begin{equation}\label{eq:treestep}
\frac{15}{32}2^{-j}t\leq|w-y|\leq2^{-j}t\qquad\text{and}\qquad|w-y|-(w-y)\cdot e_1\leq C2^{-j}Q_t\,.
\end{equation}
Let~$j_0$ be the largest integer with~$2^{-j_0}t\geq4$. For~$j<j_0$, the pairs in~$\mathcal A_j$ satisfy~$1\leq|w-y|\leq N/2$. Hence~\eqref{eq:premise}, isotropy, $Q_{2^{-j}t}\leq2^{-\nf{2j}5}Q_t$ and the union bound~\eqref{eq:union} show that the maximum~$X_j$ of~$(T(y,w)-|w-y|)_+$ over~$(y,w)\in\mathcal A_j$ satisfies~$X_j\leq\Os_1(C(1+j)2^{-\nf{2j}5}Q_t)$.

Let~$z\in B$, let~$D_0=B\supseteq D_1\supseteq\cdots$ be nested subboxes containing~$z$, with~$D_j$ of generation~$j$, and let~$y_j\coloneqq p(D_j)$. Since~$y_0=0$ and~$|z-y_{j_0}|\leq9$, the triangle inequality and~\eqref{eq:treestep} give
\begin{equation*}
T(0,z)-z\cdot e_1\leq\sum_{j<j_0}\bigl(T(y_j,y_{j+1})-(y_{j+1}-y_j)\cdot e_1\bigr)+C\leq\sum_{j<j_0}\bigl(X_j+C2^{-j}Q_t\bigr)+C\,.
\end{equation*}
The right side does not depend on~$z$, and, by~\eqref{eq:sum}, it is at most~$\Os_1(CQ_t)$.
\end{proof}

We prove the next proposition with Lemma~\ref{lem:tree} and the triangle inequality through points of~$\R e_1$, first for~$s\leq N/4$, where~\eqref{eq:premise} applies at all the lengths involved, and then for~$s>N/4$, after bounding~$T_t$ for every~$t\in[\beta_1N,\beta_2N]$.

\begin{proposition}[Passage times between cubes]\label{prop:profiles}
If~$N$ is sufficiently large, depending only on~$K$ and~$\beta_1$, then, for every~$s\in[\beta_1N,N]$ and all cubes~$I,I'$ as in~\eqref{eq:cubepair} with~$|y|\leq s$,
\begin{equation}\label{eq:profiles}
\sup_{u\in I,\,v\in I'}|T_P(u,v)-|v-u||\leq\Os_1(CQ_s)\,,
\end{equation}
both for~$P=\R^d$ and for~$P=S$.
\end{proposition}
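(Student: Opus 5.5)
The plan is to prove the upper and lower bounds in~\eqref{eq:profiles} separately: first for~$P=\R^d$ and~$s\leq N/4$, then for~$s\in(N/4,N]$. The case~$P=S$ then follows. Since~$T_S\geq T$, the lower bound for~$S$ is inherited from~$\R^d$, and the upper bound follows from~\eqref{eq:slab}, because~$\Os_1(C\log^2(2+s))\leq\Os_1(CQ_s)$ for~$s\geq\beta_1N$ and~$N$ large. Throughout, let~$c,c'$ be the centers of~$I,I'$, let~$e\coloneqq(c'-c)/|c'-c|$, and note that~$|c'-c|\in[s,\sqrt2\,s]$. Since~$\sigma,\sigma'\leq4(sQ_s)^{\nf12}$, every~$u\in I$ lies within~$C(sQ_s)^{\nf12}$ of the line~$c+\R e$, and likewise every~$v\in I'$. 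Elementary geometry then gives
\begin{equation*}
0\leq|v-u|-(v-u)\cdot e\leq C\,sQ_s/s=CQ_s\,.
\end{equation*}
So it suffices to compare~$T(u,v)$ with~$(v-u)\cdot e$. By~\eqref{eq:regime}, the cubes have diameter much smaller than~$s$.

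\emph{Upper bound, $s\leq N/4$.} Let~$m\coloneqq(c+c')/2$ and~$t\coloneqq|c'-c|/2\leq N/2$. I will apply the second form of Lemma~\ref{lem:tree}, from~$m$ in the direction~$-e$ to the box~$I$ and in the direction~$e$ to the box~$I'$. This is legitimate because both cubes have diameter at most~$t/8$ and lie in the~$16(d\,tQ_t)^{\nf12}$-neighborhood of the line, using~$Q_t\geq cQ_s$. The triangle inequality~$T(u,v)\leq T(u,m)+T(m,v)$ then gives
\begin{equation*}
\sup_{u,v}\bigl(T(u,v)-(v-u)\cdot e\bigr)\leq\Os_1(CQ_s)
\end{equation*}
by~\eqref{eq:sum}.

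\emph{Lower bound, $s\leq N/8$.} Here I use points outside the segment. Let~$a\coloneqq c-se$ and~$b\coloneqq c'+se$. Then
\begin{equation*}
T(u,v)\geq T(a,b)-T(a,u)-T(v,b)\,.
\end{equation*}
By~\eqref{eq:premise} at length~$|b-a|\leq N/2$, we have~$T(a,b)\geq|b-a|-\Os_1(Q_{|b-a|})$. By Lemma~\ref{lem:tree} from~$a$ and from~$b$, we have~$T(a,u)\leq(u-a)\cdot e+\Os_1(CQ_s)$ and similarly for~$T(v,b)$. Since~$|b-a|=(b-a)\cdot e$, these combine to~$T(u,v)\geq(v-u)\cdot e-\Os_1(CQ_s)$.

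\emph{Larger~$s$.} I first extend~\eqref{eq:premise} to~$|T_t-t|\leq\Os_1(C(\beta_1,\beta_2)Q_t)$ for~$t\in[\beta_1N,\beta_2N]$. The upper tail comes from subadditivity along~$k=k(\beta_2)$ collinear pieces of length at most~$N/8$, together with~\eqref{eq:premise} and~$\sum Q_{t/k}\leq CQ_t$.

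The lower tail is the main obstacle. I will use~\eqref{eq:columns} with a fixed number~$M$ of slabs of width~$t/M\leq N/8$, so that~$T_t\geq F-\Os_1(CM\log^2(2+t))$, and bound~$F$ from below. Cut each face into cubes of side~$(sQ_s)^{\nf12}$ with~$s=t/M$. A chain visits a sequence of cubes, and in slab~$i$ it jumps by~$k_i$ cubes transversally. By the lower bound already proved for small~$s$, extended to transverse offsets~$|y|\leq s$ by isotropy, each slab passage time is at least~$|x_i-x_{i-1}|-\Os_1(CQ_s)$, uniformly over each pair of cubes. Moreover,
\begin{equation*}
\sum_i|x_i-x_{i-1}|\geq t+c\,Q_s\sum_i|k_i-k_{i-1}|^2-CMQ_s\,,
\end{equation*}
so the quadratic gain from transverse motion pays for a union bound over cube sequences, in the spirit of Lemma~\ref{lem:polynomialarray} but with exponential tails and~$M$ fixed. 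This yields~$F\geq t-\Os_1(C(M)Q_t)$.

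Given this extension, the case~$s\in(N/4,N]$ is the same argument as before. For the upper bound, I route through~$k$ equally spaced points on~$[c,c']$, apply Lemma~\ref{lem:tree} for the two end pieces, and use the extended bound at the fixed intermediate points. For the lower bound, I take~$a,b$ as before and apply the extended bound to~$T(a,b)$ with~$|b-a|\leq5N$. Every constant then depends only on~$\beta_1$, through the fixed choices of~$k$ and~$M$.
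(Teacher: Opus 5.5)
Your overall route is the paper's. You use triangle inequalities through auxiliary points on the line through the cube centers, Lemma~\ref{lem:tree} for uniformity over the cubes, and the induction hypothesis on the line. For larger~$s$, you extend~\eqref{eq:premise} to lengths of order~$N$ using a fixed number of slabs, \eqref{eq:columns} and a union bound over cube sequences, which is what Lemmas~\ref{lem:rowenergy} and~\ref{lem:parent} do. Routing the upper bound through the midpoint, and getting the upper tail of the extension by subadditivity, are harmless variations; the proposition does not need the small constant~$\nf1{100}$ of~\eqref{eq:parent}.

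One step, however, fails as written. For~$s\in(N/2,N]$ you take~$a,b$ ``as before'', i.e.\ $a=c-se$. Bounding~$T(a,u)$ uniformly over~$u\in I$ then requires Lemma~\ref{lem:tree} at the length~$s>N/2$. That lemma is only available for~$16\leq t\leq N/2$, since its chaining applies~\eqref{eq:premise} at lengths comparable to~$t$. Your extended bound does not fill this, because it holds for fixed endpoints: it controls~$T(a,b)$ but not a supremum over a cube. The repair is the paper's choice of auxiliary points at distance~$s/16$ from the cube centers. Lemma~\ref{lem:tree} is then used only at the length~$s/16\leq N/16$, and the extended bound is applied to~$T(a,b)$ at the length~$|c'-c|+s/8\leq2N$. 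Equivalently, you can route~$T(a,u)$ through~$c-\nf s{16}\,e$ and use the extended upper tail on the segment of the line.

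Two smaller points. First, your lower bound for small~$s$ stops at~$N/8$, because~$|b-a|$ can reach~$(2+2^{\nf12})s$, while your large-$s$ case starts at~$N/4$. The range~$(N/8,N/4]$ therefore also has to be handled with the extended bound.

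Second, only the threshold on~$N$ may depend on~$\beta_1$, not the constant~$C$ in~\eqref{eq:profiles}. This matters because Lemma~\ref{lem:parent} uses the proposition, through Lemma~\ref{lem:rowenergy}, with~$\beta_1/M$ in place of~$\beta_1$, and then chooses~$M$ in terms of that constant. Your argument does give a constant independent of~$\beta_1$, since~$k$ and~$M$ are fixed by the range of lengths (contained in~$[N/8,5N]$) that you need. You should state it that way rather than claim a dependence on~$\beta_1$ in your last sentence.
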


\begin{proof}[Proof of Proposition~\ref{prop:profiles} for~$s\leq N/4$]
By stationarity and isotropy, we may assume that the centers of~$I$ and~$I'$ are~$0$ and~$re_1$, with~$s\leq r\leq2^{\nf12}s$. By the triangle inequality, for~$u\in I$ and~$v\in I'$,
\begin{equation*}
\begin{aligned}
T(u,v)&\geq T\bigl(-\nf s{16}e_1,(r+\nf s{16})e_1\bigr)-T\bigl(-\nf s{16}e_1,u\bigr)-T\bigl(v,(r+\nf s{16})e_1\bigr)\,,\\
T(u,v)&\leq T\bigl(u,\nf s{16}e_1\bigr)+T\bigl(\nf s{16}e_1,(r-\nf s{16})e_1\bigr)+T\bigl((r-\nf s{16})e_1,v\bigr)\,.
\end{aligned}
\end{equation*}
Since~$r+s/8<2s\leq N/2$, the induction hypothesis~\eqref{eq:premise} gives~$|T(x,z)-|z-x||\leq\Os_1(CQ_s)$ for the two terms~$T(x,z)$ of the display with~$x,z\in\R e_1$. Since~$Q_{s/16}\geq Q_s/4$, Lemma~\ref{lem:tree} with~$t=s/16$ and~$e=\pm e_1$ gives~$T(x,z)\leq(z-x)\cdot e_1+\Os_1(CQ_s)$ for the four terms of the display that involve~$u$ or~$v$, uniformly in~$u$ and~$v$, for~$N$ large. In each line of the display, the terms~$(z-x)\cdot e_1$ and~$|z-x|$ of these bounds add up to~$(v-u)\cdot e_1$, so, by~\eqref{eq:sum}, $|T(u,v)-(v-u)\cdot e_1|\leq\Os_1(CQ_s)$ uniformly in~$u$ and~$v$. Since~$0\leq|v-u|-(v-u)\cdot e_1\leq CQ_s$, this proves~\eqref{eq:profiles} for~$P=\R^d$. For~$N$ large, $|v-u|\leq5^{\nf12}\,s$, so~\eqref{eq:slab} with~$P=S$ and~$z=0$ adds~$\Os_1(C\log^2(2+N))\leq\Os_1(Q_s)$.
\end{proof}

We now cut the faces of the slabs of~\eqref{eq:F} into cubes of side~$(sQ_s)^{\nf12}$ (Figure~\ref{fig:stepone}). For~$M$, $s$ and~$F$ as in Section~\ref{sec:prelim}, let
\begin{equation}\label{eq:cubes}
I_i(j)\coloneqq\{is\}\times(sQ_s)^{\nf12}\bigl(j+[0,1]^{d-1}\bigr)\qquad\text{for }0\leq i\leq M\text{ and }j\in\Z^{d-1}\,,
\end{equation}
and let~$x_0,\ldots,x_M$ be the lexicographically least chain attaining the minimum in~\eqref{eq:F}. Translating~$\omega$ and the endpoints by a vector orthogonal to~$e_1$ translates the~$x_i$ by the same vector. For an array~$Z_i(j,k)$ indexed by~$1\leq i\leq M$ and~$j,k\in\Z^{d-1}$, we write~$Z_i(u,v)\coloneqq Z_i(j,k)$ for the lexicographically greatest~$j$ and~$k$ with~$u\in I_{i-1}(j)$ and~$v\in I_i(k)$.

The next lemma bounds~$F$ and the transverse increments of its minimizing chain, by a union bound over the sequences~$(k_i)_i$ for which a chain visits the cubes~$I_i(k_i)$.

\begin{lemma}\label{lem:rowenergy}
If~$N$ is sufficiently large, depending only on~$K$ and~$\beta_1$, then, for every positive integer~$M$ and every~$s\in[\beta_1N,N/4]$,
\begin{equation}\label{eq:rowenergy}
|F-Ms|+\frac1s\sum_{i=1}^M|x_i-x_{i-1}-se_1|^2\leq CMQ_s+\Os_1(CQ_s)\,.
\end{equation}
\end{lemma}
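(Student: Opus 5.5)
The plan is to bound $F$ from above with the straight chain, and from below over all chains using Proposition~\ref{prop:profiles} (the case $s\leq N/4$) together with a union bound over the cube sequences visited. The two bounds will then be compared.

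\emph{Upper bound.} I take the chain $x_i\coloneqq ise_1$, which is admissible in~\eqref{eq:F}. By Proposition~\ref{prop:profiles} with $P$ the $i$th slab, each $T^{(i)}(x_{i-1},x_i)-s$ is at most $\Os_1(CQ_s)$. These $M$ variables are independent, since they depend on $\omega$ in disjoint slabs. I use independence through exponential moments rather than~\eqref{eq:sum}, which would lose a factor $M$ in the fluctuation. Writing $X_i$ for the positive parts, $\E[\exp(\sum_iX_i/(C'Q_s))]\leq2^{M/2}$ for a suitable $C'$, and Markov's inequality then gives
\begin{equation*}
F-Ms\leq CMQ_s+\Os_1(CQ_s)\,.
\end{equation*}
The same bound also gives $F\leq 2Ms$ with overwhelming probability, which is needed only implicitly.

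\emph{Lower bound with the transverse energy.} Define the array
\begin{equation*}
Z_i(j,k)\coloneqq\sup_{u\in I_{i-1}(j),\,v\in I_i(k)}\bigl(|v-u|-T^{(i)}(u,v)\bigr)_+/Q_s\,,
\end{equation*}
restricted to those $j,k$ with $|k-j|(sQ_s)^{\nf12}\leq s/4+C(sQ_s)^{\nf12}$. These are the only pairs an admissible chain can use, and they satisfy $|y|\leq s$ in~\eqref{eq:cubepair} after translating and rotating. By Proposition~\ref{prop:profiles} with $\sigma=\sigma'=(sQ_s)^{\nf12}$, each $Z_i(j,k)\leq\Os_1(C)$. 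For fixed $i$ the array depends only on $\omega$ in the $i$th slab, so the arrays are independent in $i$.

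For any admissible chain, write $w_i\coloneqq|x_i-x_{i-1}-se_1|\leq s/4$. Then $|x_i-x_{i-1}|\geq s+cw_i^2/s$, and so
\begin{equation*}
\sum_iT^{(i)}(x_{i-1},x_i)-Ms\geq c\sum_iw_i^2/s-Q_s\sum_iZ_i(x_{i-1},x_i)\,.
\end{equation*}
With $k_i$ the cube index of $x_i$, I have $|k_i-k_{i-1}|^2\leq C(w_i^2/(sQ_s)+1)$. It therefore suffices to show that
\begin{equation*}
\sup_{k_1,\ldots,k_M}\Bigl\{\sum_iZ_i(k_{i-1},k_i)-c''\sum_i|k_i-k_{i-1}|^2\Bigr\}\leq CM+\Os_1(C)\,.
\end{equation*}
Indeed, for the minimizing chain this gives $\tfrac c2\sum_iw_i^2/s\leq F-Ms+CMQ_s+\Os_1(CQ_s)$. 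Combined with the upper bound, this controls both $F-Ms$ from below and the energy term, which yields~\eqref{eq:rowenergy}.

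\emph{Main step: the exponential union bound.} Lemma~\ref{lem:polynomialarray} gives only an expectation, so I would redo the union bound with exponential moments instead. For a small constant $\gamma$, the quantity $\E[\exp(\gamma\sup_k\{\cdots\})]$ is at most the sum over sequences $k$ of $\prod_i\E[\exp(\gamma Z_i(k_{i-1},k_i))]\,e^{-\gamma c''|k_i-k_{i-1}|^2}$. Here I use that for a fixed sequence the $M$ entries lie in independent arrays. Since $\E[\exp(\gamma Z)]\leq2$ for $\gamma\leq1/C$, the sum over sequences factorizes over the increments $k_i-k_{i-1}\in\Z^{d-1}$. It is therefore at most $\bigl(2\sum_{\Delta\in\Z^{d-1}}e^{-\gamma c''|\Delta|^2}\bigr)^M\leq C^M$, and Markov's inequality gives the claimed $CM+\Os_1(C)$.

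The delicate points are, first, checking that every cube pair an admissible chain can visit satisfies the hypotheses of Proposition~\ref{prop:profiles}: the width $s\in[\beta_1N,N/4]$, the offset $|y|\leq s$, and the side at most $4(sQ_s)^{\nf12}$. Second, the $+1$ rounding between $w_i$ and $|k_i-k_{i-1}|$ must be absorbed into the $CMQ_s$ term. I expect this exponential union bound, in place of Lemma~\ref{lem:polynomialarray}, to be the main obstacle, because it is where the $\Os_1$ tail has to survive a supremum over infinitely many cube sequences.
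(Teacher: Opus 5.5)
Your proposal is correct and follows the paper's proof essentially step for step: the upper bound comes from the straight chain $x_i=ise_1$ with independent exponential moments, and the lower bound and the transverse energy come from an exponential union bound over cube sequences, summed over the increments to give $C^M$, with Lemma~\ref{lem:polynomialarray} playing no role. The only cosmetic difference is that the paper builds the quadratic penalty $cQ_s|k-j|^2$ into its array $Z_i$, whereas you apply $|x_i-x_{i-1}|\geq s+cw_i^2/s$ to the chain directly and compare with $|k_i-k_{i-1}|^2$ afterwards.
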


\begin{proof}
For~$1\leq i\leq M$ and~$j,k\in\Z^{d-1}$ with~$(sQ_s)^{\nf12}|k-j|\leq s/2$, let~$u\in I_{i-1}(j)$ and~$v\in I_i(k)$, so that these cubes are as in~\eqref{eq:cubepair} with~$\sigma=\sigma'=(sQ_s)^{\nf12}$, with~$S$ the~$i$th slab and with~$|y|\leq s/2$. For~$N$ large, $|v-u|\geq s+cQ_s|k-j|^2-CQ_s$. Hence Proposition~\ref{prop:profiles}, with~$P=S$ the~$i$th slab, gives
\begin{equation*}
Z_i(j,k)\coloneqq\Bigl(s+cQ_s|k-j|^2-\inf_{I_{i-1}(j)\times I_i(k)}T^{(i)}\Bigr)_+\leq\Os_1(C_1Q_s)\,.
\end{equation*}
The arrays~$Z_i$ are independent over~$i$, so, for~$k_0=0,k_1,\ldots,k_M$ with~$(sQ_s)^{\nf12}|k_i-k_{i-1}|\leq s/2$,
\begin{equation*}
\E\Bigl[\exp\Bigl(\sum_{i=1}^M\frac{Z_i(k_{i-1},k_i)-\frac c2Q_s|k_i-k_{i-1}|^2}{C_1Q_s}\Bigr)\Bigr]\leq\prod_{i=1}^M2e^{-c|k_i-k_{i-1}|^2/(2C_1)}\,.
\end{equation*}
The sum of these products over~$k_1,\ldots,k_M\in\Z^{d-1}$ is at most~$C^M$. Hence a union bound gives
\begin{equation*}
\sup_{k_1,\ldots,k_M}\Bigl\{Ms+\frac c2Q_s\sum_{i=1}^M|k_i-k_{i-1}|^2-\sum_{i=1}^M\inf_{I_{i-1}(k_{i-1})\times I_i(k_i)}T^{(i)}\Bigr\}\leq CMQ_s+\Os_1(CQ_s)\,.
\end{equation*}
The same argument, applied to the single chain~$x_i=ise_1$, gives~$F\leq Ms+CMQ_s+\Os_1(CQ_s)$. Let~$k_i$ be the lexicographically greatest index with~$x_i\in I_i(k_i)$; since~$|x_i-x_{i-1}-se_1|\leq s/4$, we have~$(sQ_s)^{\nf12}|k_i-k_{i-1}|\leq s/2$ for~$N$ large. Then
\begin{equation*}
F\geq\sum_{i=1}^M\inf_{I_{i-1}(k_{i-1})\times I_i(k_i)}T^{(i)}\qquad\text{and}\qquad|x_i-x_{i-1}-se_1|^2\leq2sQ_s\bigl(|k_i-k_{i-1}|^2+d-1\bigr)\,,
\end{equation*}
so the bound on the supremum gives~\eqref{eq:rowenergy}.
\end{proof}

For~$s$ and~$N$ as in Lemma~\ref{lem:rowenergy}, let~$Z_1,\ldots,Z_M$ be independent random arrays of nonnegative random variables with~$\E[(Z_i(j,k)-|j-k|^2)_+^{4d}]\leq1$ for all~$i,j,k$. With~$k_i$ the lexicographically greatest index with~$x_i\in I_i(k_i)$, \eqref{eq:rowenergy} gives~$\E[\sum_i|k_i-k_{i-1}|^2]\leq CM$, so Lemma~\ref{lem:polynomialarray}, applied to the arrays~$(Z_i(j,k)-|j-k|^2)_+$, gives
\begin{equation}\label{eq:chainsum}
\E\Bigl[\sum_{i=1}^MZ_i(x_{i-1},x_i)\Bigr]\leq CM\,.
\end{equation}

The next lemma gives the bound~(a) of Section~\ref{ssec:overview}, from Lemma~\ref{lem:rowenergy} and~\eqref{eq:columns} with a fixed number of slabs.

\begin{lemma}\label{lem:parent}
If~$N$ is sufficiently large, depending only on~$K,\beta_1,\beta_2$, then, for every~$t\in[\beta_1N,\beta_2N]$,
\begin{equation}\label{eq:parent}
|T_t-t|\leq C(\beta_2)Q_t+\Os_1(Q_t/100)\,.
\end{equation}
\end{lemma}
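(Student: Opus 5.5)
The plan is to compare $T_t$ with the minimum $F$ of~\eqref{eq:F} over a fixed number $M$ of slabs of width $s\coloneqq t/M$. Lemma~\ref{lem:rowenergy} controls $F$, and~\eqref{eq:columns} controls $F-T_t$. The integer $M$ will depend only on $\beta_2$. It will be large enough that $s\leq N/4$ and that the random part of the error is at most $Q_t/100$. The deterministic error $CMQ_s$ is allowed to be large, since it goes into $C(\beta_2)Q_t$.

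\emph{Choice of $M$.} Let $M\geq\max\{4\beta_2,M_0\}$, where $M_0$ is an absolute constant fixed below. Then $s=t/M\in[(\beta_1/M)N,N/4]$, so Lemma~\ref{lem:rowenergy} applies with $\beta_1/M$ in place of $\beta_1$, for $N$ large depending on $K,\beta_1,\beta_2$. Dropping the nonnegative transverse sum in~\eqref{eq:rowenergy} gives
\begin{equation*}
|F-t|\leq CMQ_s+\Os_1(CQ_s)\,.
\end{equation*}
Since $K\geq10$, we have $Q_s=M^{-(\nf12-\nf1K)}Q_t\leq M^{-\nf25}Q_t$. The two terms therefore become $CM^{\nf35}Q_t$ and $\Os_1(CM^{-\nf25}Q_t)$. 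Choosing $M_0$ so that $CM_0^{-\nf25}\leq\nf1{200}$ makes the random part at most $\Os_1(Q_t/200)$.

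\emph{Comparison with $T_t$.} By~\eqref{eq:columns} with $M$ slabs of width $s$, we have $0\leq F-T_t\leq\Os_1(CM\log^2(2+t))$. Since $t\geq\beta_1N$, we get $Q_t\geq A(\beta_1N)^{\nf12-\nf1K}\geq cN^{\nf25}$ for a constant $c$ depending on $\beta_1$. Hence $CM\log^2(2+\beta_2N)\leq Q_t/200$ once $N$ is large depending on $K,\beta_1,\beta_2$.

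\emph{Both tails.} For the upper tail, $T_t-t\leq F-t$. For the lower tail, $t-T_t\leq(t-F)+(F-T_t)$. In each case, adding the bounds with~\eqref{eq:sum} gives
\begin{equation*}
|T_t-t|\leq CM^{\nf35}Q_t+\Os_1(Q_t/100)\,,
\end{equation*}
which is~\eqref{eq:parent} with $C(\beta_2)\coloneqq CM^{\nf35}$.

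The only delicate point is making the random error small, namely $\nf1{100}$, while leaving the deterministic one large. This works because Lemma~\ref{lem:rowenergy} puts the factor $M$ only in front of the deterministic term, and the random term $\Os_1(CQ_s)$ gains $M^{-\nf25}$ relative to $Q_t$. I therefore expect no step to need more than bookkeeping of the dependence of thresholds on $K$, $\beta_1$, $\beta_2$ and $M$.
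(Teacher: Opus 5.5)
Your proposal is correct and is essentially the paper's proof: you fix $M$ depending only on~$\beta_2$ so that $s=t/M\leq N/4$ and the random constant of~\eqref{eq:rowenergy} times $M^{-\nf25}$ is at most $\nf1{200}$, apply Lemma~\ref{lem:rowenergy} with $\beta_1/M$ in place of~$\beta_1$ together with~\eqref{eq:columns}, and absorb $CMQ_s\leq CM^{\nf35}Q_t$ into $C(\beta_2)Q_t$, using $CM\log^2(2+N)\leq Q_t/200$ for $N$ large. Your explicit handling of the two tails via~\eqref{eq:sum} only spells out a step the paper leaves implicit.
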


\begin{proof}
Fix an integer~$M$, depending only on~$\beta_2$, such that~$s\coloneqq t/M\leq N/4$ and~$C_1M^{-\nf25}\leq\nf1{200}$, where~$C_1$ is the constant of the term~$\Os_1(CQ_s)$ in~\eqref{eq:rowenergy}. Since~$s\geq\beta_1N/M$, Lemma~\ref{lem:rowenergy} with~$\beta_1/M$ in place of~$\beta_1$, and~\eqref{eq:columns}, give~$|T_t-t|\leq CMQ_s+\Os_1(C_1Q_s+CM\log^2(2+N))$. Since~$Q_s\leq M^{-\nf25}Q_t$ and~$CM\log^2(2+N)\leq Q_t/200$ for~$N$ large, this proves~\eqref{eq:parent}.
\end{proof}

\begin{proof}[Proof of Proposition~\ref{prop:profiles} for~$N/4<s\leq N$]
The proof for~$s\leq N/4$ applies, with~\eqref{eq:parent} in place of~\eqref{eq:premise} at the lengths~$r\pm s/8\in[N/8,2N]$.
\end{proof}
\section{Transverse fluctuations and continuity in the endpoints}\label{sec:localization}
\suppressfloats[t]

We fix a length~$n\in[\beta_1N,N]$, and cubes~$I,I'$ and a slab~$S$ as in~\eqref{eq:cubepair} with~$s=n$ and~$|y|\leq n/2$. We bound the distance from the curves in~$\Pi_{T_S}(u,v)$, for~$u\in I$ and~$v\in I'$, to~$[u,v]$, and the modulus of continuity of~$T_S(u,v)-|v-u|$ in~$u$ and~$v$.

To bound the distance, we extend the curves to two fixed points~$a$ and~$b$ on the line through the centers of the cubes. Since the limit shape is a ball, the results of Section~\ref{sec:exponential} show that an extended curve through a point at distance~$\delta$ from this line has passage time at least~$|b-a|+c\delta^2/n-\Os_1(CQ_n)$, uniformly in the curve.

\begin{lemma}[Transverse fluctuations]\label{lem:localization}
If~$N$ is sufficiently large, depending only on~$K$ and~$\beta_1$, then
\begin{equation}\label{eq:localization}
(nQ_n)^{-\nf12}\sup_{u\in I,v\in I'}\sup_{\pi\in\Pi_{T_S}(u,v)}\sup_{x\in\pi}\dist(x,[u,v])\leq\Os_2(C)\,.
\end{equation}
\end{lemma}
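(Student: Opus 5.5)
We extend every near-geodesic to two fixed points on the line through the centers of the cubes. Then a large transverse excursion forces a curvature cost of order $\delta^2/n$, and this cost must be paid against fluctuations that are uniformly $\Os_1(CQ_n)$. By a rigid motion, let the centers of $I$ and $I'$ be $0$ and $w$, with $w\cdot e_1=n$ and $|w|\le 2^{\nf12}n$. Put $e\coloneqq w/|w|$, $a\coloneqq -\nf n{16}e$ and $b\coloneqq w+\nf n{16}e$.

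For $u\in I$, $v\in I'$, $\pi\in\Pi_{T_S}(u,v)$ and $x\in\pi$, we have, since $\rho\le\rho_S$ on $S$,
\[
T(a,x)+T(x,b)\le T(a,u)+T_S(u,v)+1+T(v,b).
\]
First, we bound the right side from above uniformly. Lemma~\ref{lem:tree}, with $t=n/16$ and directions $\pm e$, gives
\[
T(a,u)\le (u-a)\cdot e+\Os_1(CQ_n)\qquad\text{and}\qquad T(v,b)\le (b-v)\cdot e+\Os_1(CQ_n),
\]
uniformly over the cubes; these lie in the required neighbourhood since their sides are at most $4(nQ_n)^{\nf12}$. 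Proposition~\ref{prop:profiles} with $P=S$ gives $T_S(u,v)\le|v-u|+\Os_1(CQ_n)$. Since $|v-u|-(v-u)\cdot e\le CQ_n$, the right side is at most $|b-a|+\Os_1(CQ_n)$, by~\eqref{eq:sum}.

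Second, we bound the left side from below uniformly in $x$. Let $\delta=\dist(x,\R e)$. Elementary geometry gives $|x-a|+|b-x|\ge|b-a|+c\delta^2/n$ as long as $x$ lies in a bounded neighbourhood of the segment. Near-geodesics stay in $S$, and their length is at most $Cn$ because $\rho$ is bounded below, so $x$ lies in a ball of radius $Cn$.

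We need $T(a,x)\ge|x-a|-\Os(\cdot)$ uniformly in $x$. We cover the region by boxes at dyadic transverse levels $\delta\in[2^k(nQ_n)^{\nf12},2^{k+1}(nQ_n)^{\nf12})$, of side about $(nQ_n)^{\nf12}$, and use the cube version of Proposition~\ref{prop:profiles}. When $x$ is at distance comparable to $n$ from $a$, the lower bound $T(a,x)\ge|x-a|-\Os_1(CQ_n)$ holds uniformly over a cube, once a small cube around $a$ is taken as the other cube. When $x$ is close to $a$ or $b$, we use the cruder Lemma~\ref{lem:classical} together with the triangle inequality, or split at the face of $S$. Along the way we must check that the pairs are in the configuration~\eqref{eq:cubepair}, with $|y|\le s$ and $s\in[\beta_1 N,N]$ after adjusting $\beta_1$.

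At level $k$ there are at most $C(2^k)^{d-1}\cdot n/(nQ_n)^{\nf12}$ boxes. This count is polynomial in $n$, so the union bound~\eqref{eq:union} costs a factor $\log n$. That factor is too large unless it is absorbed into the curvature gain $c4^kQ_n$. Hence we get, for each $k$, the bound $P[\text{some }x\text{ at level }k]\le \exp(-c4^k+C\log(2^kn))$, which is only useful once $4^k\gtrsim\log n$.

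This is the main obstacle: for the statement, the union bound over positions along the segment must not cost a logarithm. I would fix this by exploiting the monotone structure in the longitudinal coordinate. I would partition $[a,b]$ into a bounded number $M$ of longitudinal pieces, and apply the cube bounds of Proposition~\ref{prop:profiles} only for pairs of cubes of side $2^k(nQ_n)^{\nf12}$ at the endpoints of these pieces. The side constraint $\sigma\le4(sQ_s)^{\nf12}$ is handled by subdividing into $C2^{k(d-1)}$ cubes per face. I would then use the first-hitting point of the curve on an intermediate face, together with the convexity cost of the polygonal path. The number of cube tuples at level $k$ is then $C^{M}2^{kC}$, polynomial in $2^k$ and independent of $n$. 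Each such tuple fails with probability $2e^{-c4^k/C}$, giving tail $e^{-c\lambda^2}$ for $\lambda=2^k$, which is exactly $\Os_2(C)$ in~\eqref{eq:localization}.

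Finally, we pass from $\dist(x,\R e)$ to $\dist(x,[u,v])$. For $x$ in the slab, moving $u$ and $v$ within their cubes shifts the line by $O((nQ_n)^{\nf12})$, and since $x\cdot e_1\in[0,n]$ the distances to the line and to the segment agree up to that error.
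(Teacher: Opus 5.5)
Your upper bound on $T(a,u)+T_S(u,v)+1+T(v,b)$ is the paper's. Your diagnosis of the naive covering is also correct: cells of side $(nQ_n)^{\nf12}$ give about $(n/Q_n)^{\nf12}$ longitudinal positions, and the union bound loses a factor $\log n$.

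The gap is in your fix. The crossing points of $\pi$ with $M$ fixed hyperplanes, together with the convexity of the polygon through them, control the transverse position of $\pi$ only at those crossings. But \eqref{eq:localization} is a supremum over all $x\in\pi$. A near-geodesic can leave the line by $\lambda(nQ_n)^{\nf12}$ between two consecutive hyperplanes and return before the next one. Its crossing points are then all near the line. Your lower bounds on the passage times between consecutive crossing points, by their Euclidean distances minus fluctuations, never see the excursion. Charging for it requires a lower bound on $T(a,z)+T(z,b)$ that is uniform over $z$ in $d$-dimensional regions whose longitudinal extent is comparable to $n$. That is exactly the problem you set out to avoid. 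Proposition~\ref{prop:profiles} concerns cubes lying in faces, so it cannot supply this by itself. Making the hyperplanes denser brings the logarithm back, unless you turn it into a full multiscale recursion, which the proposal does not contain.

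The missing ingredient is Lemma~\ref{lem:tree}. By dyadic chaining, it bounds $T(x,z)-(z-x)\cdot e$ from above uniformly over a whole box of diameter up to $t/8$, at cost $\Os_1(CQ_t)$ and with no logarithm. The paper therefore covers the region by long thin boxes $B_{j,k}=[t_j-n/1000,t_j+n/1000]\times(nQ_n)^{\nf12}(k+[-1,1]^{d-1})$, so only $O(1)$ longitudinal indices $j$ occur.

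With $x_+\coloneqq(t_j+n/16,(nQ_n)^{\nf12}k)$, Lemma~\ref{lem:tree} at length $n/16$ gives $T(z,x_+)\leq e_1\cdot(x_+-z)+\Os_1(CQ_n)$ uniformly over $z\in B_{j,k}$. Combine this with \eqref{eq:parent} for the single pair $(a,x_+)$ and the triangle inequality $T(a,z)\geq T(a,x_+)-T(z,x_+)$. This gives $T(a,z)\geq e_1\cdot(z-a)+cQ_n|k|^2-\Os_1(CQ_n)$ on the whole box, and similarly for $T(z,b)$ through $x_-$. The union bound over $k$ is then paid by the penalty $cQ_n|k|^2$. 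This gives \eqref{eq:boxpenalty}, and hence \eqref{eq:localization}, with no loss.
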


Since~$\rho$ is bounded, moving the endpoints by~$h(nQ_n)^{\nf12}$ changes~$T_S(u,v)$ by at most~$Ch(nQ_n)^{\nf12}$, which is much larger than~$Q_n$. The next lemma shows that~$T_S(u,v)-|v-u|$ changes only by a small multiple of~$Q_n$; its proof reroutes a near-geodesic near its endpoints, where fluctuation bounds at shorter distances apply.

\begin{lemma}[Continuity in the endpoints]\label{lem:modulus}
For every~$\eta>0$, there exists~$h>0$, depending only on~$\eta$, such that, if~$N$ is sufficiently large, depending only on~$K,\eta,\beta_1$, then
\begin{equation}\label{eq:modulus}
\E\Bigl[\sup_{\substack{u,\widetilde u\in I,\ v,\widetilde v\in I'\\|u-\widetilde u|,\,|v-\widetilde v|\leq h(nQ_n)^{\nf12}}}\bigl|T_S(u,v)-T_S(\widetilde u,\widetilde v)-|v-u|+|\widetilde v-\widetilde u|\bigr|^{8d}\Bigr]^{\nf1{8d}}\leq\eta Q_n\,.
\end{equation}
\end{lemma}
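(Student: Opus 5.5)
By the symmetry $(u,v)\leftrightarrow(\widetilde u,\widetilde v)$ and the triangle inequality through $(\widetilde u,v)$, I would reduce \eqref{eq:modulus} to a one-sided bound in which only the starting point moves. Concretely, it suffices to show that the positive part of $T_S(\widetilde u,v)-T_S(u,v)-|v-\widetilde u|+|v-u|$ has $8d$-th moment of order at most $(\eta Q_n/4)^{8d}$. The supremum in this bound is over $u,\widetilde u\in I$ with $|u-\widetilde u|\leq h(nQ_n)^{\nf12}$ and over $v\in I'$. The other endpoint is handled by reversing the curves.

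The idea is to reroute near the endpoint. Fix a length $\ell\coloneqq\lambda n$, with $\lambda\in(0,1)$ small and to be chosen depending on $\eta$. I would then set $h\coloneqq\lambda^{\nf12}\theta$, with $\theta$ small. Take $\pi\in\Pi_{T_S}(u,v)$ and let $w$ be its first point on the hyperplane $\{x\cdot e=\ell\}$ parallel to the faces of $S$. Here coordinates are taken so that $I$ lies in $\{x\cdot e=0\}$. Then
\begin{equation*}
T_S(\widetilde u,v)\leq T_S(\widetilde u,w)+T_S(w,v)\leq T_S(\widetilde u,w)+T_S(u,v)+1-T_S(u,w)\,.
\end{equation*}
So I need
\begin{equation*}
T_S(\widetilde u,w)-T_S(u,w)\leq|w-\widetilde u|-|w-u|+\Os_1(C\lambda^{\nf12-\nf1K}Q_n)+\text{(error)}\,,
\end{equation*}
together with $|w-\widetilde u|-|w-u|\leq|v-\widetilde u|-|v-u|+\text{(small)}$. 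By Lemma~\ref{lem:localization}, $w$ lies within $\Os_2(C)(nQ_n)^{\nf12}$ of $[u,v]$. Its transverse offset from the point of $[u,v]$ at height $\ell$ is therefore of order $(nQ_n)^{\nf12}$. That is $\lambda^{-\nf12}$ times the natural scale $(\ell Q_\ell)^{\nf12}$ at length $\ell$, which is acceptable for Proposition~\ref{prop:profiles} at $s=\ell$, since the cube sides there may be up to $4(sQ_s)^{\nf12}$ and $|y|\leq s$. I would partition the possible positions of $w$ into cubes of side $(\ell Q_\ell)^{\nf12}$ on the hyperplane. The number of relevant cubes is polynomial in $\lambda^{-1}$ times the random $\Os_2(C)$ factor. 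For each such cube $J$, Proposition~\ref{prop:profiles} with $s=\ell$ (valid since $\ell\geq\lambda\beta_1N$, for $N$ large depending on $\lambda$) gives
\begin{equation*}
\sup_{u,\widetilde u\in I^*,\,w\in J}\bigl|T_S(\widetilde u,w)-|w-\widetilde u|-T_S(u,w)+|w-u|\bigr|\leq\Os_1(CQ_\ell)\,,
\end{equation*}
where $I^*$ is a subcube of $I$ of side $(\ell Q_\ell)^{\nf12}$ containing $u$ and $\widetilde u$. This uses $h(nQ_n)^{\nf12}\leq(\ell Q_\ell)^{\nf12}$, which holds when $\theta$ is small. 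The union bound \eqref{eq:union} over the polynomially many pairs $(I^*,J)$ costs a factor $\log(C/\lambda)$ times the localization factor. Since $Q_\ell\leq C\lambda^{\nf25}Q_n$, the total is $\Os_1(C\lambda^{\nf25}\log(C/\lambda))\,Q_n$ after handling the random localization width by splitting on its dyadic size. This can be made smaller than $\eta Q_n/8$.

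The remaining geometric term is deterministic. The difference $|w-\widetilde u|-|w-u|$ is, to first order, $(u-\widetilde u)\cdot\frac{w-u}{|w-u|}$, while $|v-\widetilde u|-|v-u|$ is $(u-\widetilde u)\cdot\frac{v-u}{|v-u|}$ to first order. The unit vectors differ by at most about $\dist(w,[u,v])/\ell\lesssim\lambda^{-1}(Q_n/n)^{\nf12}\Os_2(C)$. Multiplying by $|u-\widetilde u|\leq\lambda^{\nf12}\theta(nQ_n)^{\nf12}$ gives $\theta\lambda^{-\nf12}Q_n\Os_2(C)$. The second-order terms are of order $h^2nQ_n/\ell=\theta^2Q_n$. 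Choosing $\lambda$ first and then $\theta$ small makes everything at most $\eta Q_n/8$ in $L^{8d}$.

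I expect the main obstacle to be the uniformity: $w$ is random and depends on $\pi$, and the supremum over the slab cubes must be handled jointly with the random localization width from Lemma~\ref{lem:localization}. I would first try intersecting with the event that the width is at most $2^m$, taking the union bound over the $C(2^m/\lambda)^{C}$ cubes, and summing over $m$ using the $\Os_2$ tail. A lesser point is that near-geodesics for $T_S$ restricted to a subslab still satisfy $T_S(u,w)\leq$ the cost of the subarc, which is all that is used above.
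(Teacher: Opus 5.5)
Your proposal is correct and follows essentially the paper's argument: reroute the near-geodesic at a mesoscopic distance $\lambda n$ from the endpoint (the paper uses $\gamma n$ with $h=\gamma^3$), control the rerouted pieces by Proposition~\ref{prop:profiles} at that scale with a union bound over cube pairs whose number is limited by Lemma~\ref{lem:localization}, and bound the Euclidean terms by a first-order expansion. The paper differs only cosmetically: it moves both endpoints at once, uses a single localization event plus Cauchy--Schwarz instead of your dyadic splitting, and writes the geometry via $\phi(x)=|x|-e_1\cdot x$. One point to make explicit is that Proposition~\ref{prop:profiles} concerns $T$ and $T_{S_1}$ for the width-$\ell$ slab $S_1$, not $T_S$; you therefore need the sandwich $T\leq T_S\leq T_{S_1}$ on $S_1$, which holds because $\rho\leq\rho_S\leq\rho_{S_1}$ there.
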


\begin{figure}[b]
\centering
\begin{tikzpicture}[x=.82cm,y=.5cm,every node/.style={font=\small}]
\fill[figshade] (0,-.95) rectangle (6,.95);
\draw[dashed,figguide] (0,-.95)--(6,-.95) (0,.95)--(6,.95);
\draw[figguide] (0,-1.4)--(0,2.7) (6,-1.4)--(6,2.7);
\draw[figguide,densely dotted] (-1.5,0)--(7.5,0);
\draw (-1.5,0) .. controls (-.9,.02) and (-.4,.18) .. (0,.25);
\draw[thick] plot[smooth,tension=.7] coordinates {(0,.25) (1.3,.6) (2.8,-.3) (4.4,.45) (6,-.2)};
\draw (6,-.2) .. controls (6.5,-.15) and (7,-.04) .. (7.5,0);
\draw[very thick,figcube] (0,-.4)--(0,.4) (6,-.4)--(6,.4);
\draw[fighigh,dashed,thick] plot[smooth,tension=.8] coordinates {(-1.5,0) (.6,1.55) (3,2.3) (5.4,1.55) (7.5,0)};
\fill (-1.5,0) circle (2pt) (7.5,0) circle (2pt) (0,.25) circle (1.4pt) (6,-.2) circle (1.4pt);
\fill[fighigh] (3,2.3) circle (1.8pt);
\draw[<->,black!60] (3.3,.04)--(3.3,2.26);
\node[right] at (3.3,1.4) {$\delta$};
\node[above] at (3,2.35) {$z$};
\node[left] at (-1.55,0) {$a$};
\node[right] at (7.55,0) {$b$};
\node[above left] at (-.02,.3) {$u$};
\node[below left] at (5.98,-.22) {$v$};
\node[below left] at (-.02,-.45) {$I$};
\node[below right] at (6.02,-.45) {$I'$};
\node[below] at (3,-1.4) {$S$};
\end{tikzpicture}
\caption{The slab~$S$ with the cubes~$I$ and~$I'$ (thick blue) on its faces, a curve in~$\Pi_{T_S}(u,v)$ (black) extended to~$a$ and~$b$ (thin black), and a curve from~$a$ to~$b$ (orange dashed) through a point~$z$ at distance~$\delta$ from the line through~$a$ and~$b$ (dotted), outside the shaded band of width~$C(nQ_n)^{\nf12}$.}\label{fig:localization}
\end{figure}

\begin{proof}[Proof of Lemma~\ref{lem:localization}]
By stationarity and isotropy, we may assume that the centers of~$I$ and~$I'$ are~$0$ and~$re_1$, with~$n\leq r\leq2n$. Write~$z=(z_1,w)$ with~$z_1\coloneqq e_1\cdot z$ and~$w\in\R^{d-1}$. The points of~$I$ and~$I'$ lie within distance~$2(d\,nQ_n)^{\nf12}$ of~$0$ and~$re_1$, and the curves in~$\Pi_{T_S}(u,v)$ have length at most~$C(|v-u|+1)$. Hence, for~$N$ large, the curves in~\eqref{eq:localization} lie in
\begin{equation*}
\Lambda\coloneqq\{(z_1,w):-C_0n\leq z_1\leq r+C_0n,\ |w|\leq C_0n\}
\end{equation*}
for a constant~$C_0$. Let~$a\coloneqq-2C_0ne_1$ and~$b\coloneqq(r+2C_0n)e_1$. For a rectifiable curve~$\pi$, let~$T(\pi)\coloneqq\int_\pi\rho(z)\,|dz|$. We compare an upper bound on~$T(a,u)+T(\pi)+T(v,b)$ (the black curves in Figure~\ref{fig:localization}) with a lower bound on~$T(a,z)+T(z,b)$ for~$z\in\Lambda$ (the orange dashed curve).

We first prove the upper bound
\begin{equation*}
\sup_{u\in I,\,v\in I'}\ \sup_{\pi\in\Pi_{T_S}(u,v)}\bigl\{T(a,u)+T(\pi)+T(v,b)\bigr\}\leq|b-a|+\Os_1(CQ_n)\,.
\end{equation*}
Since~$\rho\leq\rho_S$, we have~$T(\pi)\leq T_S(u,v)+1\leq|v-u|+\Os_1(CQ_n)$, by~\eqref{eq:profiles}. By the triangle inequality through the points~$-\nf n{16}e_1$ and~$(r+\nf n{16})e_1$, Lemma~\ref{lem:tree} at the length~$n/16$ and~\eqref{eq:parent} at the length~$(2C_0-\nf1{16})n$ give~$T(a,u)\leq e_1\cdot(u-a)+\Os_1(CQ_n)$ and~$T(v,b)\leq e_1\cdot(b-v)+\Os_1(CQ_n)$, uniformly in~$u$ and~$v$. Since~$|v-u|-e_1\cdot(v-u)\leq CQ_n$, the upper bound follows by~\eqref{eq:sum}.

We next prove the lower bound: for some~$c>0$,
\begin{equation}\label{eq:boxpenalty}
\sup_{z=(z_1,w)\in\Lambda}\Bigl\{|b-a|+\frac{c|w|^2}{n}-T(a,z)-T(z,b)\Bigr\}\leq\Os_1(CQ_n)\,.
\end{equation}
We cover~$\Lambda$ by boxes and bound~$T(a,z)$ and~$T(z,b)$ from below through points at distance~$n/16$ from the center of each box. For integers~$j\geq0$ and~$k\in\Z^{d-1}$, let~$t_j\coloneqq-C_0n+jn/500$ and
\begin{equation*}
B_{j,k}\coloneqq[t_j-n/1000,t_j+n/1000]\times(nQ_n)^{\nf12}\bigl(k+[-1,1]^{d-1}\bigr)\,;
\end{equation*}
we use only the boxes that meet~$\Lambda$, which cover it. Let~$x_\pm\coloneqq(t_j\pm n/16,(nQ_n)^{\nf12}k)$. For~$N$ large, Lemma~\ref{lem:tree} at the length~$n/16$ gives~$T(z,x_+)\leq e_1\cdot(x_+-z)+\Os_1(CQ_n)$ for~$z\in B_{j,k}$. By~\eqref{eq:parent} at the length~$|x_+-a|\in[C_0n,Cn]$, isotropy and the triangle inequality, for~$z\in B_{j,k}$,
\begin{equation*}
T(a,z)\geq e_1\cdot(z-a)+\frac{nQ_n|k|^2}{|x_+-a|+e_1\cdot(x_+-a)}-\Os_1(CQ_n)\geq e_1\cdot(z-a)+cQ_n|k|^2-\Os_1(CQ_n)\,.
\end{equation*}
With
\begin{equation*}
X_{j,k}\coloneqq\sup_{z\in B_{j,k}}\bigl(|b-a|+cQ_n|k|^2-T(a,z)-T(z,b)\bigr)_+\,,
\end{equation*}
this bound and the same bound for~$T(z,b)$, using~$x_-$, give~$X_{j,k}\leq\Os_1(CQ_n)$. Moreover, $|w|^2/n\leq Q_n(2|k|^2+C)$ for~$z=(z_1,w)\in B_{j,k}$, so
\begin{equation*}
|b-a|+\frac{c|w|^2}{4n}-T(a,z)-T(z,b)\leq X_{j,k}-\frac c2Q_n|k|^2+CQ_n\,.
\end{equation*} Summing the tails of~$X_{j,k}-cQ_n|k|^2/2$ over the boxes proves~\eqref{eq:boxpenalty}.

Finally, let~$Y\coloneqq\sup\{|w|:(z_1,w)\in\pi,\ \pi\in\Pi_{T_S}(u,v),\ u\in I,\ v\in I'\}$. If~$z$ lies on~$\pi\in\Pi_{T_S}(u,v)$, then~$T(a,z)+T(z,b)\leq T(a,u)+T(\pi)+T(v,b)$, so the two bounds give~$Y^2\leq\Os_1(CnQ_n)$, that is, $Y\leq\Os_2(C(nQ_n)^{\nf12})$. Since~$|y|\leq n/2$, the normal of~$S$ makes an angle at most~$\arctan\nf12$ with~$e_1$, so the points~$z=(z_1,w)$ of~$S$ satisfy~$-|w|/2\leq z_1\leq r+|w|/2$. Hence the points~$z$ of the curves satisfy~$\dist(z,[u,v])\leq2Y+2(d\,nQ_n)^{\nf12}$.
\end{proof}

\begin{proof}[Proof of Lemma~\ref{lem:modulus}]
We may again assume that the centers of~$I$ and~$I'$ are~$0$ and~$re_1$. Fix~$h\in(0,4^{-3})$, let~$\gamma\coloneqq h^{\nf13}$, let~$Z$ be the supremum of~$|T_S(u,v)-T_S(\widetilde u,\widetilde v)-|v-u|+|\widetilde v-\widetilde u||$ over the points in~\eqref{eq:modulus}, and let~$N$ be large, depending on~$K,h,\beta_1$. We bound~$Z$ separately on the event~$E$ that the left side of~\eqref{eq:localization} is at least~$1/\gamma$, and off~$E$. Then~$\Pp[E]\leq2e^{-c\gamma^{-2}}$ by~\eqref{eq:localization}, and~$\E[Z^{16d}]^{\nf1{16d}}\leq CQ_n$ by~\eqref{eq:profiles}. Hence the Cauchy--Schwarz inequality gives~$\E[Z^{8d}\mathbf 1_E]^{\nf1{8d}}\leq Ce^{-c\gamma^{-2}}Q_n$.

Off~$E$, we reroute the curves near their endpoints. With~$g$ as in~\eqref{eq:cubepair}, let~$S_1\coloneqq g([0,\gamma n]\times\R^{d-1})$ and~$S_2\coloneqq g([(1-\gamma)n,n]\times\R^{d-1})$, and let~$H_1\coloneqq g(\{\gamma n\}\times\R^{d-1})$ and~$H_2\coloneqq g(\{(1-\gamma)n\}\times\R^{d-1})$. Let~$u,\widetilde u\in I$ and~$v,\widetilde v\in I'$ with~$|u-\widetilde u|,|v-\widetilde v|\leq h(nQ_n)^{\nf12}$, let~$\pi\in\Pi_{T_S}(u,v)$, let~$x_1$ be its first point in~$H_1$, and let~$x_2$ be its last point in~$H_2$. Since~$\rho\leq\rho_S\leq\rho_{S_1}$ on~$S_1$, and similarly on~$S_2$, replacing the pieces of~$\pi$ before~$x_1$ and after~$x_2$ by curves in~$\Pi_{T_{S_1}}(\widetilde u,x_1)$ and~$\Pi_{T_{S_2}}(x_2,\widetilde v)$ gives
\begin{equation*}
T_S(\widetilde u,\widetilde v)-T_S(u,v)\leq T_{S_1}(\widetilde u,x_1)-T(u,x_1)+T_{S_2}(x_2,\widetilde v)-T(x_2,v)+3\,.
\end{equation*}

Let
\begin{equation*}
D\coloneqq\sup\bigl|T_P(z,z')-|z'-z|\bigr|\,,
\end{equation*}
where the supremum is over~$P\in\{\R^d,S_1\}$, $z\in I$ and~$z'\in H_1$, and over~$P\in\{\R^d,S_2\}$, $z\in I'$ and~$z'\in H_2$, with~$\dist(z',[0,re_1])\leq(\gamma^{-1}+2d^{\nf12})(nQ_n)^{\nf12}$. Off~$E$, the points~$x_1$ and~$x_2$ satisfy this condition. These pairs of points lie in at most~$C\gamma^{-4d}$ pairs of cubes as in~\eqref{eq:cubepair} with~$s=\gamma n$ and~$|y|\leq\gamma n$, and with~$S_1$ or~$S_2$ in place of~$S$. Hence~\eqref{eq:profiles} and the union bound~\eqref{eq:union} give~$D\leq\Os_1(CQ_{\gamma n}\log(2/\gamma))$. Writing~$|x|=e_1\cdot x+\phi(x)$, with~$\phi(x)\coloneqq|x|-e_1\cdot x$, the terms~$e_1\cdot x$ cancel, and
\begin{multline*}
T_S(\widetilde u,\widetilde v)-|\widetilde v-\widetilde u|-T_S(u,v)+|v-u|\\
\leq4D+3+\phi(x_1-\widetilde u)-\phi(x_1-u)+\phi(\widetilde v-x_2)-\phi(v-x_2)-\phi(\widetilde v-\widetilde u)+\phi(v-u)\,.
\end{multline*}
If~$e_1\cdot x\geq0$, then~$|\nabla\phi(x)|\leq2|x-(e_1\cdot x)e_1|/|x|$. This is at most~$C(1+\gamma^{-1})(nQ_n)^{\nf12}/(\gamma n)$ on the segments joining the arguments of the first two pairs of~$\phi$ terms, and at most~$C(Q_n/n)^{\nf12}$ on the segment joining those of the last pair. The arguments in each pair differ by at most~$2\gamma^3(nQ_n)^{\nf12}$, so the~$\phi$ terms add up to at most~$C\gamma Q_n$. Exchanging the roles of~$(u,v)$ and~$(\widetilde u,\widetilde v)$ gives~$Z\leq4D+3+C\gamma Q_n$ off~$E$. Since~$1\leq\gamma Q_n$ and~$Q_{\gamma n}\leq\gamma^{\nf25}Q_n$,
\begin{equation*}
\E[Z^{8d}]^{\nf1{8d}}\leq C\bigl(e^{-c\gamma^{-2}}+\gamma^{\nf25}\log(2/\gamma)+\gamma\bigr)Q_n\,.
\end{equation*}
Since~$\gamma=h^{\nf13}$, we have~$C(e^{-c\gamma^{-2}}+\gamma^{\nf25}\log(2/\gamma)+\gamma)\leq\eta$ if~$h$ is small, depending only on~$\eta$.
\end{proof}
\section{A logarithmic gain of variance}\label{sec:gain}

In this section we prove the bound~(b) of Section~\ref{ssec:overview}: $\Var[T_{Kn}]\leq\delta_KQ_{Kn}^2$ for~$Kn\in[N/8,N]$. We let~$\delta_K$ and~$\delta_L$ denote numbers depending only on~$K$, respectively on~$L$, that tend to zero as~$K\to\infty$, respectively~$L\to\infty$, and may change from line to line.

Let~$n$ be a length with~$Kn\in[N/8,N]$, and let~$F$, $T^{(i)}$, $I_i(j)$ and~$x_i$ be as defined before Lemma~\ref{lem:rowenergy}, with~$M=K$ and~$s=n$. For an integer~$L\geq1$ and vectors~$a_i\in\{0,\ldots,L-1\}^{d-1}$, we shift the blocks of Section~\ref{ssec:overview} in the~$i$th slab by~$(nQ_n)^{\nf12}a_i$, so that they are the sets~$[(i-1)n,in]\times(nQ_n)^{\nf12}(z+[0,L)^{d-1})$ for~$z\in a_i+L\Z^{d-1}$.

For each block~$G$, let~$\sigma_G$ be a random variable determined by~$\omega|_G$. The \emph{resampling} of~$G$ is the noise~$\omega^G$ that coincides with~$\omega$ outside~$G$ and whose restriction to~$G$ is drawn, given~$\omega$, from the conditional law of~$\omega|_G$ given~$\sigma_G$, independently for distinct blocks, so that~$(\omega^G,\omega)$ has the law of~$(\omega,\omega^G)$. With~$\sigma_G$ constant, $\omega^G|_G$ is an independent copy of~$\omega|_G$. Let~$T^{(i),G}$ and~$F^G$ be~$T^{(i)}$ and~$F$ computed with~$\omega^G$.

Changing~$\omega|_G$ can increase~$T^{(i)}(u,v)$ only if some curve in~$\Pi_{T^{(i)}}(u,v)$ passes within distance one of~$G$. For~$1\leq i\leq K$ and~$j,k\in\Z^{d-1}$ with~$(nQ_n)^{\nf12}|k-j|\leq n/2$, we therefore let
\begin{equation*}
R_i(j,k)\coloneqq(d-1)^{\nf12}+2+(nQ_n)^{-\nf12}\sup_{u\in I_{i-1}(j),\,v\in I_i(k)}\ \sup_{\pi\in\Pi_{T^{(i)}}(u,v)}\ \sup_{x\in\pi}\dist(x,[\bar u,\bar v])\,,
\end{equation*}
where~$\bar u$ and~$\bar v$ are the centers of~$I_{i-1}(j)$ and~$I_i(k)$, the \emph{tube}
\begin{equation*}
U_i(j,k)\coloneqq\bigl\{x:\dist(x,[\bar u,\bar v])\leq R_i(j,k)(nQ_n)^{\nf12}\bigr\}\,,
\end{equation*}
and
\begin{equation*}
Y_i(j,k)\coloneqq\frac1{Q_n^2}\sum_G\Bigl[\sup_{u\in I_{i-1}(j),\,v\in I_i(k)}\bigl(T^{(i),G}(u,v)-T^{(i)}(u,v)\bigr)_+\Bigr]^2\,,
\end{equation*}
where the sum is over the blocks~$G$ of the~$i$th slab. For the other~$j,k$, we let~$R_i(j,k)=Y_i(j,k)=0$. For~$N$ large, $nQ_n\geq1$, so~$U_i(j,k)$ contains the points within distance one of the curves in~$\Pi_{T^{(i)}}(u,v)$ for~$u\in I_{i-1}(j)$ and~$v\in I_i(k)$. The arrays~$Y_1,\ldots,Y_K$ are independent, and, by Assumption~\ref{ass:local}(1), the term of~$G$ in~$Y_i(j,k)$ vanishes unless~$G$ meets~$U_i(j,k)$.

The next lemma bounds the cost of resampling blocks along the minimizing chain, and the widths of the tubes.

\begin{lemma}\label{lem:blocks}
If $N$ is sufficiently large, depending only on~$K$, then
\begin{equation}\label{eq:radiusenergy}
\E\Bigl[\sum_{i=1}^KR_i(x_{i-1},x_i)^2\Bigr]\leq CK\,,
\end{equation}
and, for every integer~$L\geq1$, every choice of the vectors~$a_i$ and of the random variables~$\sigma_G$, every~$i\in\{1,\ldots,K\}$ and every pair $j,k$ with $(nQ_n)^{\nf12}|k-j|\leq n/2$,
\begin{equation}\label{eq:rewardmoment}
\E[Y_i(j,k)^{16d}]^{\nf1{16d}}\leq C(1+|j-k|)\qquad\text{and}\qquad
\sum_G\E[(F^G-F)^2]\leq CKQ_n^2\,,
\end{equation}
where the sum is over the blocks of all~$K$ slabs.
\end{lemma}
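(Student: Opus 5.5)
The three bounds are proved in the order stated. The first and third both come from \eqref{eq:chainsum}, applied to arrays that are independent over the slabs. The second is a per-pair moment bound that feeds into the third.

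\emph{The radii.} For fixed~$i,j,k$ with~$(nQ_n)^{\nf12}|k-j|\leq n/2$, the cubes~$I_{i-1}(j)$ and~$I_i(k)$ are as in~\eqref{eq:cubepair}, with~$s=n$, $\sigma=\sigma'=(nQ_n)^{\nf12}$, $S$ the~$i$th slab and~$|y|\leq n/2$. For~$u,v$ in these cubes, $\dist(x,[\bar u,\bar v])\leq\dist(x,[u,v])+C(nQ_n)^{\nf12}$. Hence Lemma~\ref{lem:localization} gives~$R_i(j,k)\leq C+\Os_2(C)$, and so~$R_i(j,k)^2\leq C+\Os_1(C)$. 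In particular~$\E[R_i(j,k)^{8d}]\leq C$ uniformly in~$i,j,k$. Each~$R_i$ is determined by~$\omega$ restricted to the~$i$th slab, so the arrays~$R_1^2,\ldots,R_K^2$ are independent. After dividing by a constant, $Z_i\coloneqq R_i^2/C$ satisfies~$\E[(Z_i(j,k)-|j-k|^2)_+^{4d}]\leq1$. Then~\eqref{eq:chainsum} gives~\eqref{eq:radiusenergy}.

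\emph{The moment of~$Y_i(j,k)$.} Write~$\Delta_G\coloneqq\sup_{u,v}(T^{(i),G}(u,v)-T^{(i)}(u,v))_+$. Since~$\omega^G$ has the law of~$\omega$, Proposition~\ref{prop:profiles} applies to both~$T^{(i),G}$ and~$T^{(i)}$. This gives~$\Delta_G\leq\sup|T^{(i),G}-|v-u||+\sup|T^{(i)}-|v-u||\leq\Os_1(CQ_n)$, so all moments of~$\Delta_G/Q_n$ are bounded. The term of~$G$ vanishes unless~$G$ meets~$U_i(j,k)$, so~$Y_i(j,k)=Q_n^{-2}\sum_G\mathbf 1_{\{G\cap U_i(j,k)\neq\emptyset\}}\Delta_G^2$. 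Let~$p\coloneqq16d$. By Minkowski's inequality and then Cauchy--Schwarz,
\begin{equation*}
\E[Y_i(j,k)^{p}]^{\nf1p}\leq\sum_G\Pp[G\cap U_i(j,k)\neq\emptyset]^{\nf1{2p}}\,\E[(\Delta_G/Q_n)^{4p}]^{\nf1{2p}}\,.
\end{equation*}
To bound the sum, group the blocks of the~$i$th slab by their transverse distance~$m(nQ_n)^{\nf12}$ from the segment~$[\bar u,\bar v]$. The segment moves transversally by only~$|k-j|(nQ_n)^{\nf12}$ across the slab, and the blocks have full slab width~$n$ in the~$e_1$ direction. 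Hence there are at most~$C(1+m)^{d-2}(1+m+|j-k|)$ blocks at distance about~$m(nQ_n)^{\nf12}$, using~$L\geq1$. Such a block meets~$U_i(j,k)$ only if~$R_i(j,k)\geq m$, which has probability at most~$2e^{-cm^2}$. The Gaussian tail absorbs the polynomial counting factors and the~$1/(2p)$ root, and the sum is at most~$C(1+|j-k|)$. This bound is uniform in~$L$, the~$a_i$ and the~$\sigma_G$, because only the law of~$\omega^G$ was used.

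\emph{The Efron--Stein sum.} The pair~$(\omega^G,\omega)$ is exchangeable, so~$\E[(F^G-F)^2]=2\E[(F^G-F)_+^2]$. If~$G$ lies in the~$i$th slab, testing~$F^G$ with the chain~$(x_\ell)$ of~$F$ gives~$(F^G-F)_+\leq\Delta_G$ evaluated at~$j,k$ with~$x_{i-1}\in I_{i-1}(j)$ and~$x_i\in I_i(k)$. Here~$(nQ_n)^{\nf12}|k-j|\leq n/2$ holds for~$N$ large, as in the proof of Lemma~\ref{lem:rowenergy}. Summing over the blocks gives
\begin{equation*}
\sum_G(F^G-F)_+^2\leq Q_n^2\sum_{i=1}^KY_i(x_{i-1},x_i)\,.
\end{equation*}
The arrays~$Y_i$ are independent, and by the second step~$\E[(Y_i(j,k)/C-|j-k|^2)_+^{4d}]\leq1$, since~$C(1+|j-k|)\leq|j-k|^2+C$. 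So~\eqref{eq:chainsum} gives~$\E[\sum_iY_i(x_{i-1},x_i)]\leq CK$, which is the second bound of~\eqref{eq:rewardmoment}.

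The main obstacle is the second step. One must avoid a crude union bound over all blocks within distance~$n$ of the segment, since that would cost a factor~$\log n$. Instead the Gaussian tail of the tube radius must both localize the relevant blocks and pay for their number. The resulting count must also grow only linearly in~$|j-k|$ (or at least slower than~$|j-k|^2$), so that the quadratic penalty in Lemma~\ref{lem:polynomialarray} and~\eqref{eq:chainsum} absorbs it along the random chain.
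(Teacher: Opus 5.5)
Your proposal is correct and follows the paper's proof essentially step for step. The only difference is in the moment bound for~$Y_i(j,k)$: you count whole blocks by their transverse distance to~$[\bar u,\bar v]$, using that each block is a union of cells, while the paper bounds each block's term by the number of its cells of side~$(nQ_n)^{\nf12}$ meeting the tube and sums over cells. Both lead to the same Gaussian-weighted sum.

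One small repair is needed. The bound $\E[(Y_i(j,k)/C-|j-k|^2)_+^{4d}]\le1$ does not follow from the linear growth $C(1+|j-k|)\le|j-k|^2+C$ alone. It needs the~$16d$th moment via~$(Y-\ell^2)_+\le Y^4/\ell^6$, which gives $\E[(Y_i(j,k)-|j-k|^2)_+^{4d}]^{\nf1{4d}}\le|j-k|^{-6}\E[Y_i(j,k)^{16d}]^{\nf1{4d}}\le C|j-k|^{-2}$ for~$|j-k|\ge1$, as in the paper.
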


\begin{proof}
Lemma~\ref{lem:localization} at the length~$n\geq N/(8K)$ gives~$R_i(j,k)\leq\Os_2(C)$. The arrays~$R_i^2$ are independent over~$i$, so~\eqref{eq:chainsum}, applied to a small multiple of~$R_i^2$, gives~\eqref{eq:radiusenergy}.

For the first bound in~\eqref{eq:rewardmoment}, we cover the~$i$th slab by the \emph{cells}
\begin{equation*}
[(i-1)n,in]\times(nQ_n)^{\nf12}\bigl(z+[0,1)^{d-1}\bigr)\,,\qquad z\in\Z^{d-1}\,.
\end{equation*}
The term of a block~$G$ in~$Y_i(j,k)$ is at most the number of cells of~$G$ that meet~$U_i(j,k)$, times~$Q_n^{-2}\sup_{u,v}|T^{(i),G}(u,v)-T^{(i)}(u,v)|^2$. Since~$\omega^G$ has the law of~$\omega$, every moment of~$Q_n^{-1}\sup_{u,v}|T^{(i),G}(u,v)-T^{(i)}(u,v)|$ is bounded by~\eqref{eq:profiles}, uniformly in~$G$, in~$L$ and in the~$\sigma_G$. The cell of~$z$ meets~$U_i(j,k)$ only if~$R_i(j,k)\geq\dist(z,[j,k])-C$, where~$[j,k]\subseteq\R^{d-1}$ is the segment between the indices. Hence H\"older's inequality and~$R_i(j,k)\leq\Os_2(C)$ bound the~$16d$th moment of the term of the cell of~$z$ in this bound by~$Ce^{-c\dist(z,[j,k])^2}$. Summing over~$z$, by Minkowski's inequality, gives the bound.

For the second bound, evaluating~$F^G$ at the points~$x_i$ gives, for~$N$ large,
\begin{equation*}
\sum_G(F^G-F)_+^2\leq Q_n^2\sum_{i=1}^KY_i(x_{i-1},x_i)\,.
\end{equation*} By exchangeability, $(F-F^G)_+$ has the law of~$(F^G-F)_+$, so
\begin{equation}\label{eq:blocksum}
\sum_G\E\bigl[(F^G-F)^2\bigr]\leq2Q_n^2\,\E\Bigl[\sum_{i=1}^KY_i(x_{i-1},x_i)\Bigr]\,.
\end{equation}
For~$|j-k|\geq1$, the first bound in~\eqref{eq:rewardmoment} gives
\begin{equation*}
\E\bigl[(Y_i(j,k)-|j-k|^2)_+^{4d}\bigr]^{\nf1{4d}}\leq|j-k|^{-6}\E\bigl[Y_i(j,k)^{16d}\bigr]^{\nf1{4d}}\leq C|j-k|^{-2}\,.
\end{equation*}
Hence~\eqref{eq:chainsum}, applied to a small multiple of the arrays~$Y_i$, gives~$\E[\sum_iY_i(x_{i-1},x_i)]\leq CK$, and~\eqref{eq:blocksum} gives the second bound.
\end{proof}

To apply Proposition~\ref{prop:BR}, we approximate the normalized passage times~$(T^{(i)}(u,v)-n)/Q_n$ across each block~$G$ by a function of~$\omega|_G$ with finitely many values.

\begin{lemma}\label{lem:labels}
Let~$L\geq16$ be an integer. There exists~$\alpha_L>0$, depending only on~$L$, such that the following holds if~$N$ is sufficiently large, depending only on~$K$ and~$L$. For every choice of the vectors~$a_i$ and every~$i\in\{1,\ldots,K\}$, there exist random variables~$X_G$, one for each block~$G$ of the~$i$th slab, such that~$X_G$ is determined by~$\omega|_G$, takes finitely many values, each a bounded function of~$(u,v)$ and each with probability at least~$\alpha_L$, and satisfies
\begin{equation}\label{eq:labelprofile}
\E\Bigl[\sup_{u,v}\Bigl|\frac{T^{(i)}(u,v)-n}{Q_n}-X_G(u,v)\Bigr|^{8d}\Bigr]^{\nf1{8d}}\leq\delta_L\,,
\end{equation}
where~$u$ and~$v$ range over the points of the two faces of~$G$ at distance at least~$(LnQ_n)^{\nf12}$ from the other blocks of the~$i$th slab. Moreover, for every block~$G$ of the~$i$th slab and every~$\tau$ orthogonal to~$e_1$ such that~$G+\tau$ is a block, $X_{G+\tau}(u+\tau,v+\tau)$, computed with~$\omega$ translated by~$\tau$, equals~$X_G(u,v)$.
\end{lemma}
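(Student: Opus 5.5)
The plan is to build $X_G$ by discretizing the normalized slab-passage-time profile inside $G$. Let $G=[(i-1)n,in]\times(nQ_n)^{\nf12}(z+[0,L)^{d-1})$, and let $J$ and $J'$ be the sets of points $u$ and $v$ on the two faces at distance at least $(LnQ_n)^{\nf12}$ from the other blocks. Set $P_G\coloneqq G$ and define
\begin{equation*}
Y_G(u,v)\coloneqq\frac{T_{G}(u,v)-n}{Q_n}\,,
\end{equation*}
with $T_G$ the passage time restricted to the block as in \eqref{eq:slabtime}. By locality, $T_G$ is determined by $\omega|_G$. The rule defining $X_G$ from $\omega|_G$ will be the same, after translation, for every block, which gives the translation covariance at the end of the statement.

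The first step is to show that $T^{(i)}(u,v)=T_G(u,v)$ up to a small error for $u\in J$ and $v\in J'$. If $L$ is chosen so that the faces of $G$ have side $L(nQ_n)^{\nf12}$, then $J$ and $J'$ are nonempty only if the distance $(LnQ_n)^{\nf12}$ from the other blocks is smaller than the block side; this requires $L^{\nf12}<L/2$, which holds since $L\geq16$. For such $u$ and $v$, the segment $[u,v]$ lies at distance at least $(LnQ_n)^{\nf12}$ from $\partial G$, apart from the faces. Lemma~\ref{lem:localization} then shows that the near-geodesics of $T^{(i)}$ stay in $G$, away from distance one of its lateral boundary, except on an event of probability at most $2e^{-cL}$. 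Off this event $T_G=T^{(i)}$ up to the additive $1$. On the event, \eqref{eq:profiles} bounds both normalized quantities in every moment by $C$; for $T_G$ one can compare with the segment, or use a slight enlargement of the cube pairs. Cauchy--Schwarz therefore gives an error at most $Ce^{-cL}$ in $L^{8d}$.

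The second step is to discretize $Y_G$. Lemma~\ref{lem:modulus}, applied with $\eta=\eta_L$ small, provides a mesh $h(nQ_n)^{\nf12}$. We cover $J$ and $J'$ by $C(L/h)^{2(d-1)}$ pairs of small cubes. On each pair, $Y_G(u,v)$ is replaced by its value at the centers plus the deterministic correction $(|v-u|-|\bar v-\bar u|)/Q_n$, which is a bounded function of $(u,v)$; Lemma~\ref{lem:modulus} controls this replacement by $\eta_L$. Each center value is then truncated to $[-M_L,M_L]$ and rounded to the grid $\eta_L\Z$. Proposition~\ref{prop:profiles} with the union bound \eqref{eq:union} makes the truncation error small in $L^{8d}$ once $M_L$ is a large multiple of $\log L/h$. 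The resulting $X_G$ takes finitely many values, at most $(2M_L/\eta_L+1)^{C(L/h)^{2(d-1)}}$ of them, and each is a bounded function of $(u,v)$.

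The third step, which I expect to be the main obstacle, is the lower bound $\alpha_L$ on the probability of each value. Rounding alone may create values of tiny probability. The fix is to merge rare values: any value of probability less than $\alpha$ is reassigned to a fixed most likely value. The total probability of the reassigned values is at most $\alpha$ times the number of values, and on that event the error is bounded by $CM_L$ in $L^{16d}$. Cauchy--Schwarz then bounds the extra error by $CM_L(\alpha\cdot\#\text{values})^{\nf1{16d}}$, which is at most $\delta_L$ for $\alpha=\alpha_L$ small. The law of $X_G$ depends on $n$, but the merging threshold and the number of values do not, so $\alpha_L$ is independent of $n$ as required. Summing the errors from the three steps gives \eqref{eq:labelprofile} with $\delta_L=C(e^{-cL}+\eta_L+\ldots)\to0$.
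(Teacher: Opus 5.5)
Your outline follows the paper's proof closely: compare $T^{(i)}$ with the block time $T_G$ via Lemma~\ref{lem:localization}, discretize by Lemma~\ref{lem:modulus}, truncation and rounding, then merge rare values. But the error accounting in your first two steps does not work as written.

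\textbf{The gap.} Proposition~\ref{prop:profiles} and Lemma~\ref{lem:modulus} are statements about $T_P$ for $P=\R^d$ or a slab. Nothing in the paper bounds $T_G$ from above for a block $G$. On the bad event, where a near-geodesic of $T^{(i)}$ leaves $G$, the only available bound is $T_G(u,v)\le\sup\rho\,|v-u|$, which is what comparing with the segment gives. So $(T_G-n)/Q_n$ is controlled only by a multiple of $n/Q_n$, which tends to infinity with $N$. Hence $\E\bigl[|Y_G-(T^{(i)}-n)/Q_n|^{8d}\mathbf 1_E\bigr]^{\nf1{8d}}$ is not shown to be $O(e^{-cL})$ uniformly in $n$. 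Enlarging the cube pairs does not help, since the obstruction is the domain $G$, not the endpoints. For the same reason, you cannot apply Lemma~\ref{lem:modulus} to $Y_G$ in your Step~2.

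\textbf{The repair.} Reverse the order, as the paper does. Truncate and round $(T_G-n)/Q_n$ at the mesh points first, and do all estimates for $T^{(i)}$ only.
\begin{itemize}
\item Off the bad event, $T_G=T^{(i)}$ at the mesh points. The error is then the modulus of continuity of $T^{(i)}$, plus the overshoot $(|T^{(i)}-n|/Q_n-M_L)_+$, plus the rounding error.
\item On the bad event, the truncated value is at most $M_L$ in absolute value. So the error is at most $2M_L$ plus that overshoot, and it contributes $2M_L\Pp[E]^{\nf1{8d}}$. This is the paper's term $2L^4Q_n\mathbf 1_E$.
\item The bad event must be defined with a slightly smaller distance than $(LnQ_n)^{\nf12}$ (the paper uses half), so that it also covers the mesh points near the edge of your region.
\item After the union bound over the cube pairs, $\Pp[E]\le CL^{2(d-1)}e^{-cL}$, not $2e^{-cL}$.
\end{itemize}

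\textbf{The truncation level.} Over the faces of $G$, the deterministic offset $\bigl||v-u|-n\bigr|$ reaches order $L^2Q_n$. A truncation at a multiple of $\log(L/h)$ alone would therefore leave an error of order $L^2$. The paper truncates at $\pm L^4$.

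\textbf{Merging.} Your third step is fine once you add one observation. By stationarity, the law of the discretized profile, written in block coordinates, is the same for every block of the slab. So the set of rare values and the most likely value are common to all blocks, and the translation property survives the merging.
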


\begin{proof}
\emph{Step 1.} We compare~$T^{(i)}$ with~$T_G$, which is defined by~\eqref{eq:slabtime} for the closure of~$G$ and determined by~$\omega|_G$. On~$G$, we have~$\rho_G\geq\rho^{(i)}$, with equality at distance more than one from the other blocks of the~$i$th slab. Hence~$T_G(u,v)\geq T^{(i)}(u,v)$ for~$u,v\in G$, with equality if the curves in~$\Pi_{T^{(i)}}(u,v)$ stay at distance more than one from the other blocks. Let~$E$ be the event that, for some~$u$ and~$v$ on the faces of~$G$ at distance at least~$(LnQ_n)^{\nf12}/2$ from the other blocks, some curve in~$\Pi_{T^{(i)}}(u,v)$ comes within distance one of another block. By~\eqref{eq:localization} for the at most~$CL^{2(d-1)}$ pairs of cubes~$I_{i-1}(j)$ and~$I_i(k)$ covering these points, $\Pp[E]\leq CL^{2(d-1)}e^{-cL}$.

\emph{Step 2.} We discretize~$T_G$. Let~$G=[(i-1)n,in]\times(nQ_n)^{\nf12}(z+[0,L)^{d-1})$ and~$\kappa\in(0,(d-1)^{-\nf12}]$. For~$u=((i-1)n,(nQ_n)^{\nf12}(z+w))$ and~$v=(in,(nQ_n)^{\nf12}(z+w'))$, let~$u_\kappa$ and~$v_\kappa$ be~$u$ and~$v$ with~$w$ and~$w'$ replaced by~$\kappa\lfloor w/\kappa\rfloor$ and~$\kappa\lfloor w'/\kappa\rfloor$, coordinatewise, and
\begin{equation*}
\Psi_G(u,v)\coloneqq\kappa\Bigl\lfloor\frac1\kappa\max\Bigl\{-L^4,\min\Bigl\{L^4,\frac{T_G(u_\kappa,v_\kappa)-n}{Q_n}\Bigr\}\Bigr\}\Bigr\rfloor\,.
\end{equation*}
Then~$\Psi_G$ is determined by~$\omega|_G$. As~$(u_\kappa,v_\kappa)$ takes at most~$\lceil L/\kappa\rceil^{2(d-1)}$ values, and~$\Psi_G(u,v)$ lies in~$\kappa\Z\cap[-L^4-\kappa,L^4]$, the function~$\Psi_G$ takes at most~$J\coloneqq(2L^4/\kappa+2)^{\lceil L/\kappa\rceil^{2(d-1)}}$ values. For~$u$ and~$v$ as in~\eqref{eq:labelprofile}, the points~$u_\kappa$ and~$v_\kappa$ are at distance at least~$(LnQ_n)^{\nf12}/2$ from the other blocks. Hence Step~1 gives
\begin{multline*}
|T^{(i)}(u,v)-n-Q_n\Psi_G(u,v)|\leq|T^{(i)}(u,v)-T^{(i)}(u_\kappa,v_\kappa)|\\
+\bigl(|T^{(i)}(u_\kappa,v_\kappa)-n|-L^4Q_n\bigr)_++2L^4Q_n\mathbf 1_E+\kappa Q_n\,.
\end{multline*}
The cubes covering the faces of~$G$, enlarged to side~$4(nQ_n)^{\nf12}$, form at most~$CL^{2(d-1)}$ pairs as in~\eqref{eq:cubepair} with~$s=n$ and~$|y|\leq CL(nQ_n)^{\nf12}\leq n/2$. On these pairs, we have~$\bigl||v-u|-n\bigr|\leq CL^2Q_n$, and~$|v-u|$ changes by at most~$CL\kappa Q_n$ when~$u$ and~$v$ are replaced by~$u_\kappa$ and~$v_\kappa$. Hence~\eqref{eq:modulus}, applied to each of these pairs, gives~$\E[\sup_{u,v}|T^{(i)}(u,v)-T^{(i)}(u_\kappa,v_\kappa)|^{8d}]\leq(\delta_LQ_n)^{8d}$ if~$\kappa$ is small, depending on~$L$. By~\eqref{eq:profiles} and~\eqref{eq:union}, $\sup_{u,v}|T^{(i)}(u_\kappa,v_\kappa)-n|\leq CL^2Q_n+\Os_1(CQ_n\log(2+L))$, so
\begin{equation*}
\E\Bigl[\sup_{u,v}\bigl(|T^{(i)}(u_\kappa,v_\kappa)-n|-L^4Q_n\bigr)_+^{8d}\Bigr]\leq(\delta_LQ_n)^{8d}\,.
\end{equation*}
Also, $\E[(2L^4Q_n\mathbf 1_E)^{8d}]=(2L^4Q_n)^{8d}\Pp[E]\leq(\delta_LQ_n)^{8d}$. Hence~\eqref{eq:labelprofile} holds with~$\Psi_G$ in place of~$X_G$.

\emph{Step 3.} We merge the values of small probability. We write~$\Psi_G$ as a function of the coordinates~$(w,w')$ of Step~2. For every~$\tau$ orthogonal to~$e_1$ such that~$G+\tau$ is a block, $\Psi_{G+\tau}$, computed with~$\omega$ translated by~$\tau$, is then the same function as~$\Psi_G$. Hence, by stationarity, $p(\psi)\coloneqq\Pp[\Psi_G=\psi]$ does not depend on the block~$G$ of the~$i$th slab. Fix~$\psi^*$ with~$p(\psi^*)=\max p$, and let~$X_G\coloneqq\Psi_G$ if~$p(\Psi_G)\geq\alpha_L$ and~$X_G\coloneqq\psi^*$ otherwise. The~$X_G$ have the translation property of the lemma. Since~$\Psi_G$ takes at most~$J$ values, $p(\psi^*)\geq1/J$, so every value of~$X_G$ has probability at least~$\alpha_L$ if~$\alpha_L\leq1/J$. Since~$|X_G-\Psi_G|\leq3L^4$, and~$X_G=\Psi_G$ off the event~$\{p(\Psi_G)<\alpha_L\}$, which has probability at most~$\alpha_LJ$, the replacement adds at most~$3L^4(\alpha_LJ)^{\nf1{8d}}$ to the left side of~\eqref{eq:labelprofile}. This is at most~$1/L$ if~$\alpha_L$ is small, depending on~$L$ and~$\kappa$, and~$\kappa$ depends only on~$L$.
\end{proof}

From now on, the~$X_G$, for the blocks~$G$ of all~$K$ slabs, are random variables as in Lemma~\ref{lem:labels}. Since~$X_G$ is determined by~$\omega|_G$ and distinct blocks have disjoint interiors, the~$X_G$ of distinct blocks are independent. We average over translations: for~$0\leq m<\lceil K^{\nf23}\rceil$, let~$F_m$ be~$F_\tau$ of Section~\ref{sec:prelim} with~$\tau=mL(nQ_n)^{\nf12}e_2$, let~$F_m^G$ be~$F_m$ computed with~$\omega^G$, and let~$\overline F\coloneqq\lceil K^{\nf23}\rceil^{-1}\sum_mF_m$. With~$f\coloneqq\E[\overline F\mid(X_G)_G]$,
\begin{equation*}
\Var[\overline F]=\E\bigl[\Var[\overline F\mid(X_G)_G]\bigr]+\Var[f]\,.
\end{equation*}
The next lemma bounds the first term. We bound the second by Proposition~\ref{prop:BR}, with the~$X_G$ as the variables and~$\E_G[f]$ the expectation of~$f$ with respect to~$X_G$ alone.

\begin{lemma}\label{lem:residual}
Let~$L\geq16$ be an integer, and let~$\omega^G$ be the resampling of~$G$ with~$\sigma_G=X_G$. If~$N$ is sufficiently large, depending only on~$K$ and~$L$, then
\begin{equation}\label{eq:residual}
\frac1{L^{K(d-1)}}\sum_a\sum_G\E\bigl[(F^G-F)^2\bigr]\leq\delta_LKQ_n^2\qquad\text{and}\qquad\frac1{L^{K(d-1)}}\sum_a\E\bigl[\Var[\overline F\mid(X_G)_G]\bigr]\leq\delta_LKQ_n^2\,,
\end{equation}
where~$a=(a_1,\ldots,a_K)$ ranges over~$(\{0,\ldots,L-1\}^{d-1})^K$; the blocks, the~$X_G$ and~$\omega^G$ depend on~$a$.
\end{lemma}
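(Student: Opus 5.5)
The plan is to reduce both bounds in~\eqref{eq:residual} to the approximation~\eqref{eq:labelprofile}, using the machinery of Lemma~\ref{lem:blocks}. When~$\sigma_G=X_G$, resampling~$G$ keeps~$X_G$ fixed and redraws~$\omega|_G$ from its conditional law. So the change of~$T^{(i)}(u,v)$ across~$G$ is governed by the discrepancy between~$(T^{(i)}-n)/Q_n$ and~$X_G$.

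The argument would run in four steps.
\begin{enumerate}
\item \emph{Modify the array~$Y_i$.} For a pair of cubes~$I_{i-1}(j),I_i(k)$ and a block~$G$, write the change~$\sup_{u,v}(T^{(i),G}(u,v)-T^{(i)}(u,v))_+$ in two parts.
\begin{enumerate}
\item If~$u,v$ lie on the faces of~$G$ at distance at least~$(LnQ_n)^{\nf12}$ from the other blocks, then~$X_G$ is unchanged. Hence, by~\eqref{eq:labelprofile} applied to~$\omega$ and to~$\omega^G$ (which has the law of~$\omega$), the change is at most~$Q_n$ times~$\Delta_G+\Delta_G^G$, with~$\Delta_G$ the left side of~\eqref{eq:labelprofile} before the expectation. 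Its moments are~$\delta_L$.
\item Otherwise the tube~$U_i(j,k)$ passes through~$G$ without both endpoints lying in its interior region. The term is then bounded as in Lemma~\ref{lem:blocks}, via~\eqref{eq:profiles}. However, it occurs only if the tube meets~$G$ and the segment~$[\bar u,\bar v]$ starts or ends within~$(LnQ_n)^{\nf12}$ of~$\partial G$, or crosses between blocks. A segment with~$|j-k|$ small stays inside a single block for the interior part.
\end{enumerate}
Note that the endpoints in~\eqref{eq:labelprofile} are on the faces of~$G$, but the chain's segment~$[x_{i-1},x_i]$ may cross several blocks. One must therefore also handle segments whose endpoints are in different blocks; this is where case (b) applies.

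\item \emph{Average over the shifts~$a$.} For a fixed~$(j,k)$, the fraction of shifts~$a_i$ for which the tube around~$[\bar u,\bar v]$ comes within~$(R_i+L^{\nf12})(nQ_n)^{\nf12}$ of a block boundary hyperplane is at most~$C(R_i+|j-k|+L^{\nf12})/L$. This tends to zero as~$L\to\infty$ for bounded~$R_i$ and~$|j-k|$. Define the averaged array~$\overline Y_i(j,k)\coloneqq L^{-(d-1)}\sum_{a_i}Y_i(j,k)$. By H\"older and the first bound of~\eqref{eq:rewardmoment}, it satisfies~$\E[\overline Y_i(j,k)^{4d}]^{\nf1{4d}}\leq\delta_L(1+|j-k|)^C$.

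\item \emph{Sum along the chain.} The chain~$x_i$ does not depend on~$a$, and the average over~$a$ factorizes over slabs. Applying~\eqref{eq:chainsum} to a multiple~$\delta_L^{-1}$ of the truncated arrays, exactly as in the proof of~\eqref{eq:rewardmoment}, together with~\eqref{eq:blocksum}, gives~$\delta_LKQ_n^2$. The exchangeability step still holds, since~$(\omega^G,\omega)$ has the law of~$(\omega,\omega^G)$. This proves the first bound in~\eqref{eq:residual}.

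\item \emph{Bound the conditional variance.} The blocks are disjoint and the resamplings are independent given~$(X_G)_G$. Hence the Efron--Stein inequality applied conditionally on~$(X_G)_G$, with~$\omega|_G$ as the independent coordinates, gives
\begin{equation*}
\Var[\overline F\mid(X_G)_G]\leq\tfrac12\sum_G\E\bigl[(\overline F^G-\overline F)^2\mid(X_G)_G\bigr]\,.
\end{equation*}
By convexity, $(\overline F^G-\overline F)^2$ is at most the average over~$m$ of~$(F_m^G-F_m)^2$. Each~$F_m$ is~$F$ translated by~$mL(nQ_n)^{\nf12}e_2$, which maps blocks to blocks and preserves the law of the~$X_G$ (translation property in Lemma~\ref{lem:labels}). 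So each~$m$ contributes the same as~$F$ by stationarity, and the first bound gives the second.
\end{enumerate}

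The main obstacle is step~2: making precise that, averaged over~$a$, the contribution of blocks where~\eqref{eq:labelprofile} does not apply is small. Concretely, one has to control the expected number of block boundaries met by the tubes~$U_i(x_{i-1},x_i)$, uniformly in the random chain. I would try to bound the count of boundary-adjacent cells by~$C(R_i+|j-k|+1)$ times~$L^{-\nf12}$ in~$a$-average, and feed this into~\eqref{eq:chainsum} through~\eqref{eq:radiusenergy}.
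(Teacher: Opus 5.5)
Your proposal is correct and follows essentially the paper's route. For a fixed pair $(j,k)$, you bound the resampling cost by the two discrepancies in~\eqref{eq:labelprofile} off a bad event (cube centre near another block, or a wide tube), whose probability averaged over the uniform shift~$a_i$ is $O((1+|j-k|+L^{\nf12})/L)$; you handle the bad event and large~$|j-k|$ by H\"older with the moment bound~\eqref{eq:rewardmoment}, sum along the chain with~\eqref{eq:chainsum} and~\eqref{eq:blocksum}, and get the second bound from conditional Efron--Stein, Jensen over~$m$ and translation invariance.

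The differences are minor. The paper randomizes~$a_i$ and puts~$\{R_i(j,k)>L^{\nf12}\}$ into the bad event instead of averaging the arrays over~$a_i$. It also rescales by a fixed~$\eta$, with $L$ large depending on~$\eta$, rather than literally by~$\delta_L^{-1}$, so that your polynomial factor in~$|j-k|$ is absorbed.
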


\begin{proof}
We begin with the first bound in~\eqref{eq:residual}, with the~$a_i$ independent of the rest and included in probabilities and expectations. Let~$Y_i(j,k)$ be defined with this resampling, and let~$\eta\in(0,1]$. We show that~$\E[(Y_i(j,k)-\eta|j-k|^2)_+^{4d}]^{\nf1{4d}}\leq\eta$ for all~$i,j,k$ if~$L$ is large depending on~$\eta$. Then~\eqref{eq:chainsum}, applied to the arrays~$Y_i/\eta$, gives~$\E[\sum_iY_i(x_{i-1},x_i)]\leq C\eta K$, and~\eqref{eq:blocksum} gives the first bound in~\eqref{eq:residual}.

If~$|j-k|\geq1$, then~\eqref{eq:rewardmoment} gives~$\E[(Y_i(j,k)-\eta|j-k|^2)_+^{4d}]^{\nf1{4d}}\leq C\eta^{-3}|j-k|^{-2}$, which is at most~$\eta$ if~$|j-k|$ is large depending on~$\eta$. It remains to show that~$\E[Y_i(j,k)^{4d}]$ tends to zero as~$L\to\infty$ for each fixed value of~$|j-k|$, uniformly in~$i$. Let~$G$ be the block containing the center~$\bar u$ of~$I_{i-1}(j)$. Let~$E$ be the event that~$\bar u$ lies within distance~$C(1+|j-k|+L^{\nf12})(nQ_n)^{\nf12}$ of another block or that~$R_i(j,k)>L^{\nf12}$. Since~$a_i$ is uniform, \eqref{eq:localization} gives~$\Pp[E]\leq C(1+|j-k|+L^{\nf12})/L+Ce^{-cL}$. Off~$E$, only~$G$ contributes to~$Y_i(j,k)$, and the cubes~$I_{i-1}(j)$ and~$I_i(k)$ lie in the region of~\eqref{eq:labelprofile}. The noise~$\omega^G$ has the law of~$\omega$, and~$X_G$ takes the same value at~$\omega$ and~$\omega^G$, so, off~$E$,
\begin{multline*}
\frac1{Q_n}\sup_{u,v}\bigl(T^{(i),G}(u,v)-T^{(i)}(u,v)\bigr)_+\\
\leq\sup_{u,v}\Bigl|\frac{T^{(i),G}(u,v)-n}{Q_n}-X_G(u,v)\Bigr|+\sup_{u,v}\Bigl|\frac{T^{(i)}(u,v)-n}{Q_n}-X_G(u,v)\Bigr|\,,
\end{multline*}
and~\eqref{eq:labelprofile} bounds both terms. On~$E$, we use H\"older's inequality and~\eqref{eq:rewardmoment}.

For the second bound, given~$(X_G)_G$, the~$\omega|_G$ are conditionally independent, and~$\omega^G|_G$ is a conditionally independent copy of~$\omega|_G$. The Efron--Stein inequality~\cite[Proposition~1]{BLM03}, extended to countably many independent variables by martingale convergence, applied conditionally to~$\overline F$, with Jensen's inequality over~$m$, gives
\begin{equation*}
\E\bigl[\Var[\overline F\mid(X_G)_G]\bigr]\leq\frac1{2\lceil K^{\nf23}\rceil}\sum_{m,G}\E\bigl[(F_m^G-F_m)^2\bigr]\,.
\end{equation*}
Translations by multiples of~$L(nQ_n)^{\nf12}e_2$ map blocks to blocks and, by Lemma~\ref{lem:labels}, the~$X_G$ of a block to that of the translated block. Hence, by stationarity, the sum over~$G$ is the same for every~$m$, and the first bound in~\eqref{eq:residual} bounds the right side.
\end{proof}

The next lemma bounds the two sums in Proposition~\ref{prop:BR}.

\begin{lemma}\label{lem:fewtranslates}
Let~$L\geq16$ be an integer. If~$N$ is sufficiently large, depending only on~$K$ and~$L$, then, for every choice of the vectors~$a_i$,
\begin{equation*}
\sum_G\E\bigl[(f-\E_G[f])^2\bigr]\leq CKQ_n^2\qquad\text{and}\qquad\sum_G\E\bigl[|f-\E_G[f]|\bigr]^2\leq CK^{-\nf16}KQ_n^2\,.
\end{equation*}
\end{lemma}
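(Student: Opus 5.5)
Both bounds reduce to a single pointwise comparison. For each block~$G$, let~$\omega^G$ be the resampling of~$G$ with~$\sigma_G$ constant, so that~$\omega^G|_G$ is an independent copy. Since~$f=\E[\overline F\mid(X_G)_G]$ and~$\E_G$ integrates out~$X_G$ alone, Jensen's inequality gives
\begin{equation*}
|f-\E_G[f]|\leq\E\bigl[|\overline F-\overline F^G|\bigm|(X_{G'})_{G'}\bigr]\,,
\end{equation*}
with~$\overline F^G$ the average computed with~$\omega^G$. Hence~$\E[(f-\E_G[f])^2]\leq\E[(\overline F^G-\overline F)^2]$ and~$\E[|f-\E_G[f]|]\leq\E[|\overline F^G-\overline F|]$. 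Moreover~$|\overline F^G-\overline F|\leq\lceil K^{\nf23}\rceil^{-1}\sum_m|F_m^G-F_m|$.

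\emph{First bound.} By Jensen's inequality over~$m$,
\begin{equation*}
\sum_G\E[(\overline F^G-\overline F)^2]\leq\lceil K^{\nf23}\rceil^{-1}\sum_m\sum_G\E[(F^G_m-F_m)^2]\,.
\end{equation*}
Each inner sum equals~$\sum_G\E[(F^G-F)^2]$ by stationarity under the translation by~$mL(nQ_n)^{\nf12}e_2$, which maps blocks to blocks. The second bound of~\eqref{eq:rewardmoment}, applied with~$\sigma_G$ constant, bounds it by~$CKQ_n^2$.

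\emph{Second bound.} We need~$\E[|\overline F^G-\overline F|]$ to be small for each~$G$, with the squares then summed over~$G$. Write~$|F_m^G-F_m|=(F_m^G-F_m)_++(F_m-F_m^G)_+$; by exchangeability the two parts have the same law, so it suffices to treat the first. Evaluating~$F_m^G$ along the minimizing chain~$x^{(m)}$ of~$F_m$, as in the proof of Lemma~\ref{lem:blocks}, gives
\begin{equation*}
(F_m^G-F_m)_+\leq\sum_i\mathbf 1\{G\cap U_i(x^{(m)}_{i-1},x^{(m)}_i)\neq\emptyset\}\,Q_n\,Y_i^{G}(x^{(m)}_{i-1},x^{(m)}_i)^{\nf12}\,,
\end{equation*}
where~$Y_i^G$ is the~$G$-term of~$Y_i$. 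So~$G$ contributes only through the translates whose tubes meet it. Fix~$G$ in slab~$i$ and translate back. The translate~$m$ meets~$G$ exactly when the original chain's tube in slab~$i$ meets~$G-mL(nQ_n)^{\nf12}e_2$. The tube has transverse width of order~$|x_i-x_{i-1}-ne_1|(nQ_n)^{-\nf12}+R_i$ in units of~$(nQ_n)^{\nf12}$. Its~$e_2$-position relative to the origin is~$x_{i-1}\cdot e_2$, which is of order~$(KnQ_n)^{\nf12}$ by~\eqref{eq:rowenergy}; this is where the count~$K^{\nf12}$ comes from. The number of~$m$ with~$G-mL(nQ_n)^{\nf12}e_2$ near this tube is therefore at most~$C(1+|k_i-k_{i-1}|+R_i)/L+1$, and it is nonzero only for~$m$ in a window around~$x_{i-1}\cdot e_2/(L(nQ_n)^{\nf12})$.

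Summing over~$m$ and using Cauchy--Schwarz, I expect
\begin{equation*}
\E\Bigl[\sum_m|F^G_m-F_m|\Bigr]\leq CQ_n\,\E\bigl[W_i\,\mathbf 1\{G\text{ within the reach of translates}\}\bigr]\,,
\end{equation*}
where~$W_i$ involves~$(1+|k_i-k_{i-1}|+R_i)\,Y_i^{\nf12}$ along the chain. Summing the square of~$\E[|\overline F^G-\overline F|]$ over~$G$ in slab~$i$ requires a lemma of the following type: the blocks~$G$ with~$\E[|\overline F^G-\overline F|]\neq0$ in a meaningful amount are spread over order~$K^{\nf12}/L+K^{\nf23}$ positions in~$e_2$, each carrying mass about~$K^{-\nf23}$ times the per-translate cost. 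The total mass is~$\sum_G\E|\overline F^G-\overline F|\lesssim Q_n\E[\cdots]$, a constant per slab. It is spread so that no single~$G$ receives more than~$C K^{-\nf16}\cdot$(per-slab mass), because at most~$K^{\nf12}$ of the~$K^{\nf23}$ translates can hit a given block, using the Gaussian tail of~$x_{i-1}\cdot e_2/(KnQ_n)^{\nf12}$ from~\eqref{eq:rowenergy} and Lemma~\ref{lem:straight}. Then~$\sum_G a_G^2\leq\max_G a_G\sum_G a_G\leq CK^{-\nf16}Q_n^2$ per slab, and summing the~$K$ slabs gives~$CK^{-\nf16}KQ_n^2$.

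The main obstacle is this max-times-sum step: proving that~$\max_G\E[|\overline F^G-\overline F|]\leq CK^{-\nf16}Q_n$ uniformly. It requires decoupling the random chain position from the resampling cost, via Lemma~\ref{lem:polynomialarray}/\eqref{eq:chainsum} applied to the arrays~$(1+|j-k|+R_i)Y_i^{\nf12}$. It also requires controlling~$\Pp[|x_{i-1}\cdot e_2|\geq\lambda(KnQ_n)^{\nf12}]$ through the transverse energy bound~\eqref{eq:rowenergy} by a Cauchy--Schwarz over the~$i$ increments.
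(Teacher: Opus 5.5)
Your first bound is correct and is the paper's argument. It uses the identity $f-\E_G[f]=\E[\overline F-\overline F^G\mid(X_H)_H]$, Jensen's inequality over~$m$, stationarity under the shifts by~$mL(nQ_n)^{\nf12}e_2$, and the second bound in~\eqref{eq:rewardmoment}.

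The second bound is not proved. You reduce it to~$\max_G\E[|\overline F^G-\overline F|]\leq CK^{-\nf16}Q_n$ and leave that open, and the reason you give for the factor~$K^{\nf12}$ is wrong. You correctly bound the number of~$m$ for which~$G-mL(nQ_n)^{\nf12}e_2$ meets~$U_i(x_{i-1},x_i)$ by the tube width~$N_i\coloneqq C(1+|x_i-x_{i-1}-ne_1|(nQ_n)^{-\nf12}+R_i(x_{i-1},x_i))$. You then attribute~$K^{\nf12}$ to the transverse position~$x_{i-1}\cdot e_2$ and its Gaussian tail. But the position only decides \emph{which}~$m$ hit~$G$, not how many. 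The~$K^{\nf12}$ is the crude single-slab bound~$\E[N_i]\leq\E[\sum_lN_l^2]^{\nf12}\leq CK^{\nf12}$ from~\eqref{eq:rowenergy} and~\eqref{eq:radiusenergy}. An upper tail on the position could never make the hit probability of a fixed translate small; that would need a lower bound on transverse fluctuations, which the average over~$m$ is designed to avoid. Likewise,~\eqref{eq:chainsum} applied to arrays like~$(1+|j-k|+R_i)Y_i^{\nf12}$ bounds the sum over all~$K$ slabs by~$CK$, so it bounds a single slab only by~$CK$, not~$CK^{\nf12}$. Your max-times-sum route would therefore need squared arrays plus a further Cauchy--Schwarz step. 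It would also need a separate bound~$\sum_G\E[|F^G-F|]\leq CKQ_n$, which holds after summing over slabs, not per slab as you state.

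The missing idea is to separate the geometric count from the resampling cost, block by block, by the Cauchy--Schwarz inequality for the joint law of~$\omega$ and an independent uniform~$m$. Let~$H_m$ be the event that~$G$ meets the tube of~$F_m$ in its slab, or the tube of~$F_m^G$ computed with~$\omega^G$. Since~$F_m^G=F_m$ off~$H_m$,
\begin{equation*}
\E\bigl[|f-\E_G[f]|\bigr]\leq\frac1{\lceil K^{\nf23}\rceil}\sum_m\E\bigl[|F_m-F_m^G|\mathbf 1_{H_m}\bigr]\leq\Bigl(\frac1{\lceil K^{\nf23}\rceil}\sum_m\Pp[H_m]\Bigr)^{\nf12}\Bigl(\frac1{\lceil K^{\nf23}\rceil}\sum_m\E\bigl[(F_m-F_m^G)^2\bigr]\Bigr)^{\nf12}\,.
\end{equation*}
By stationarity and the exchangeability of~$(\omega,\omega^G)$, the first factor is at most~$(2\E[N_i]/\lceil K^{\nf23}\rceil)^{\nf12}\leq CK^{-\nf1{12}}$, uniformly in~$G$. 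Only the geometric count enters it, so no decoupling of the chain from the costs is needed. Squaring and summing over~$G$ gives~$CK^{-\nf16}$ times the quantity you already bounded by~$CKQ_n^2$ in your first part, and the max-times-sum step becomes unnecessary.
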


\begin{proof}
Let~$\omega^G$ be the resampling of~$G$ with~$\sigma_G$ constant, so that
\begin{equation*}
f-\E_G[f]=\frac1{\lceil K^{\nf23}\rceil}\sum_m\E\bigl[F_m-F_m^G\bigm|(X_H)_H\bigr]\,.
\end{equation*} Let~$G$ lie in the~$i$th slab, and let~$x^m$ and~$\widetilde x^m$ be the lexicographically least chains attaining~$F_m$ and~$F_m^G$. Evaluating~$F_m^G$ at~$x^m$ and~$F_m$ at~$\widetilde x^m$ shows that~$F_m^G=F_m$ unless~$G$ meets~$U_i(x_{i-1}^m,x_i^m)$, or~$U_i(\widetilde x_{i-1}^m,\widetilde x_i^m)$ computed with~$\omega^G$.

The tube~$U_i(x_{i-1},x_i)$ meets at most
\begin{equation*}
C\Bigl(1+\frac{|x_i-x_{i-1}-ne_1|}{(nQ_n)^{\nf12}}+R_i(x_{i-1},x_i)\Bigr)
\end{equation*}
translates of~$G$ by multiples of~$L(nQ_n)^{\nf12}e_2$. By~\eqref{eq:rowenergy} and~\eqref{eq:radiusenergy}, the squares of these numbers, summed over~$i$, have mean at most~$CK$, so, by the Cauchy--Schwarz inequality, each has mean at most~$CK^{\nf12}$. The translation by~$-mL(nQ_n)^{\nf12}e_2$ maps~$F_m$ to~$F$ and~$G$ to such a translate. Hence, by stationarity, for an independent~$m$ uniform in~$\{0,\ldots,\lceil K^{\nf23}\rceil-1\}$, the block~$G$ meets either tube of~$F_m$ with probability at most~$2CK^{\nf12}/\lceil K^{\nf23}\rceil\leq CK^{-\nf16}$ (Figure~\ref{fig:averaging}). Jensen's inequality, and the Cauchy--Schwarz inequality for the joint law of~$m$ and~$\omega$, give
\begin{equation*}
\E\bigl[(f-\E_G[f])^2\bigr]\leq\sum_m\frac{\E[(F_m-F_m^G)^2]}{\lceil K^{\nf23}\rceil}\qquad\text{and}\qquad\E\bigl[|f-\E_G[f]|\bigr]^2\leq CK^{-\nf16}\sum_m\frac{\E[(F_m-F_m^G)^2]}{\lceil K^{\nf23}\rceil}\,.
\end{equation*}
By stationarity, \eqref{eq:rewardmoment} holds also for~$F_m$, and summing over~$G$ proves both bounds.
\end{proof}

\begin{proposition}[The gain]\label{prop:gain}
If~$K$ is sufficiently large and~$N$ is sufficiently large depending on~$K$, then
\begin{equation*}
\Var[T_{Kn}]\leq\delta_KQ_{Kn}^2\qquad\text{for every~$n$ with }Kn\in[N/8,N]\,.
\end{equation*}
\end{proposition}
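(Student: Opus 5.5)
We combine the pieces of Section~\ref{sec:gain} in four steps: Proposition~\ref{prop:BR} for~$\Var[f]$, Lemma~\ref{lem:residual} for the conditional variance, averaging over the shifts~$a$, and a comparison of~$\overline F$ with~$T_{Kn}$.

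\emph{Step 1: the variance of~$f$.} Fix~$L\geq16$ and a choice of the vectors~$a_i$. The~$X_G$ are independent, take finitely many values, and each value has probability at least~$\alpha_L$. Let~$V\coloneqq\Var[f]$. If~$V\leq K^{-\nf1{12}}KQ_n^2$, there is nothing to do. Otherwise, Lemma~\ref{lem:fewtranslates} shows that the ratio inside the logarithm of Proposition~\ref{prop:BR} is at least~$cK^{\nf1{12}}$. Proposition~\ref{prop:BR} then gives
\begin{equation*}
V\leq\frac{CKQ_n^2}{\alpha_L\log K}\,.
\end{equation*}
In both cases~$\Var[f]\leq C(L)KQ_n^2/\log K$, uniformly in~$a$.

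\emph{Step 2: the variance of~$\overline F$.} The law of~$\overline F$ does not depend on~$a$, because~$F_m$ is defined without blocks. We can therefore average the decomposition~$\Var[\overline F]=\E[\Var[\overline F\mid(X_G)_G]]+\Var[f]$ over~$a\in(\{0,\ldots,L-1\}^{d-1})^K$. The first term is bounded by the second estimate in~\eqref{eq:residual}, and the second by Step~1. This gives
\begin{equation*}
\Var[\overline F]\leq\bigl(\delta_L+C(L)/\log K\bigr)KQ_n^2\,.
\end{equation*}
Since~$Q_n=K^{-\nf12+\nf1K}Q_{Kn}$, we have~$KQ_n^2=K^{\nf2K}Q_{Kn}^2\leq CQ_{Kn}^2$. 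We first choose~$L$ large so that~$\delta_L$ is small, and then~$K$ large so that~$C(L)/\log K$ is small. This yields~$\Var[\overline F]\leq\delta_KQ_{Kn}^2$.

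\emph{Step 3: comparing~$\overline F$ with~$T_{Kn}$.} By the triangle inequality for the~$L^2$ norm,
\begin{equation*}
\Var[T_{Kn}]^{\nf12}\leq\Var[\overline F]^{\nf12}+\E\bigl[(\overline F-T_{Kn})^2\bigr]^{\nf12}\,.
\end{equation*}
Write~$\tau_m\coloneqq mL(nQ_n)^{\nf12}e_2$. For each~$m$ we split
\begin{equation*}
F_m-T_{Kn}=\bigl(F_m-T(\tau_m,\tau_m+Kne_1)\bigr)+\bigl(T(\tau_m,\tau_m+Kne_1)-T(0,Kne_1)\bigr)\,.
\end{equation*}
The first term is~$\Os_1(CK\log^2(2+N))$ by~\eqref{eq:columns}. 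This is~$o(Q_{Kn})$ for~$N$ large, since~$Q_{Kn}\geq AN^{\nf12-\nf1K}/C$.

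For the second term, the shift has size
\begin{equation*}
|\tau_m|\leq K^{\nf23}L(nQ_n)^{\nf12}=LK^{\nf16-\nf1K}\cdot\frac{(Kn\,Q_{Kn})^{\nf12}}{K^{\nf12}}\,,
\end{equation*}
which is at most~$h(KnQ_{Kn})^{\nf12}$ for any fixed~$h>0$ once~$K$ is large depending on~$L$ and~$h$. To use Lemma~\ref{lem:modulus} at length~$Kn\in[N/8,N]$ with~$\beta_1=1/8$, we need the slab version~$T_S$. We pass from~$T$ to~$T_S$ by~\eqref{eq:slab}, whose error is~$\Os_1(C\log^2 N)$. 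The Euclidean correction is also small:
\begin{equation*}
\bigl||\tau_m+Kne_1-\tau_m|-Kn\bigr|=0\,.
\end{equation*}
Lemma~\ref{lem:modulus} then bounds the second term by~$\eta Q_{Kn}$ in~$L^2$. Averaging over~$m$ gives~$\E[(\overline F-T_{Kn})^2]^{\nf12}\leq(\eta+o(1))Q_{Kn}$.

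\emph{Order of choices.} Choose~$\eta$ small; this fixes~$h$. Then choose~$L$ so that~$\delta_L$ is small, then~$K$ large depending on~$L$ and~$h$, and finally~$N$ large. Combining Steps~2 and~3 gives~$\Var[T_{Kn}]\leq\delta_KQ_{Kn}^2$.

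The main obstacle is this ordering of constants. The logarithmic gain~$C(L)/(\alpha_L\log K)$ has a constant that degrades with~$L$, while~$\delta_L$ requires~$L$ large. The shift condition and Lemma~\ref{lem:modulus} additionally require~$K$ large relative to~$L$. The order~$\eta\to h\to L\to K\to N$ resolves all three. We must also check that every step is uniform in~$a$, since the averaging over~$a$ in Step~2 requires the bound of Step~1 for each~$a$ with the same constants.
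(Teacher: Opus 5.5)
Your proposal is correct and follows the paper's proof essentially step for step: Proposition~\ref{prop:BR} with Lemma~\ref{lem:fewtranslates} for~$\Var[f]$, averaging the variance decomposition over the shifts~$a$ with Lemma~\ref{lem:residual}, and comparing~$\overline F$ with~$T_{Kn}$ through~\eqref{eq:columns} and~\eqref{eq:modulus}. The differences are minor. You fix~$L$ before~$K$ and then implicitly diagonalize to get a single~$\delta_K$, whereas the paper takes~$L$ to be the largest integer in~$[16,K^{\nf1{24}}]$ with~$\alpha_L\geq(\log K)^{-\nf12}$. You also pass from~$T$ to~$T_S$ by~\eqref{eq:slab} instead of the sandwich~$T(\tau,\tau+Kne_1)\leq T_S(\tau,\tau+Kne_1)\leq F_\tau$. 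One harmless arithmetic slip: $K^{\nf23}L(nQ_n)^{\nf12}$ equals~$LK^{-\nf1{12}+\nf1{2K}}(KnQ_{Kn})^{\nf12}$, not~$LK^{-\nf13-\nf1K}(KnQ_{Kn})^{\nf12}$. Any negative power of~$K$ suffices for the shift condition, so your conclusion stands.
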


\begin{proof}
Let~$n$ satisfy~$Kn\in[N/8,N]$, and let~$L\geq16$ be an integer, to be chosen below depending only on~$K$. All the bounds below hold for~$N$ large depending on~$K$ and~$L$, uniformly in~$n$ and~$A$.

We first bound~$\Var[f]$, for every choice of the vectors~$a_i$. Proposition~\ref{prop:BR} with~$\alpha=\alpha_L$, which applies by Lemma~\ref{lem:labels}, and Lemma~\ref{lem:fewtranslates} give
\begin{equation*}
\Var[f]\log\Bigl(\frac{\Var[f]}{CK^{-\nf16}KQ_n^2}\Bigr)\leq\frac{CKQ_n^2}{\alpha_L}\,.
\end{equation*}
If~$\Var[f]\geq K^{-\nf1{12}}KQ_n^2$, then the logarithm is at least~$\log(K^{\nf1{12}}/C)$, which is at least~$(\log K)/24$ for~$K$ large. Hence~$\Var[f]\leq(K^{-\nf1{12}}+C/(\alpha_L\log K))KQ_n^2$.

Since~$\overline F$ does not depend on the vectors~$a_i$, we may average the decomposition of~$\Var[\overline F]$ over them. Lemma~\ref{lem:residual} and the bound on~$\Var[f]$ then give~$\Var[\overline F]\leq(\delta_L+K^{-\nf1{12}}+C/(\alpha_L\log K))KQ_n^2$.

We now let~$L$ grow slowly with~$K$. The error~$\delta_L$ needs~$L\to\infty$, the term~$C/(\alpha_L\log K)$ needs~$\alpha_L\log K\to\infty$, and the comparison of~$\overline F$ with~$T_{Kn}$ below needs~$L\leq K^{\nf1{24}}$. We choose~$L$ as the largest integer in~$[16,K^{\nf1{24}}]$ with~$\alpha_L\geq(\log K)^{-\nf12}$. It tends to infinity with~$K$, so~$\Var[\overline F]\leq\delta_KKQ_n^2$.

It remains to compare~$\overline F$ with~$T_{Kn}$. For the slab~$S\coloneqq[0,Kn]\times\R^{d-1}$, we have~$\rho^{(i)}\geq\rho_S$ on the~$i$th slab, so concatenating curves gives~$T(\tau,\tau+Kne_1)\leq T_S(\tau,\tau+Kne_1)\leq F_\tau$. Since~$(nQ_n/(KnQ_{Kn}))^{\nf12}=K^{-\nf34+1/(2K)}$ and~$L\leq K^{\nf1{24}}$, the vectors~$\tau$ of the translates satisfy~$|\tau|\leq\lceil K^{\nf23}\rceil L(nQ_n)^{\nf12}\leq2K^{-\nf1{24}+1/(2K)}(KnQ_{Kn})^{\nf12}$. Hence~\eqref{eq:modulus} at the length~$Kn$ with this slab~$S$, the bound of Lemma~\ref{lem:slab} on~$F_\tau-T(\tau,\tau+Kne_1)$, and Jensen's inequality give~$\E[(\overline F-T_{Kn})^2]^{\nf12}\leq\delta_KQ_{Kn}$. Since~$KQ_n^2=K^{\nf2K}Q_{Kn}^2$ and~$K^{\nf1K}\leq2$, these bounds give~$\Var[T_{Kn}]^{\nf12}\leq\delta_KQ_{Kn}$.
\end{proof}
\section{The mean and the induction step}\label{sec:completion}

In this section we show that~$\E[T_s]-s\leq Q_s/100$ when~$\Var[T_r/Q_r]$ is small for~$r\in[s,2s]$, and we complete the proof of the induction step, Proposition~\ref{prop:induction}.

\begin{lemma}[Nonrandom fluctuations]\label{lem:smallmean}
If~$N$ is sufficiently large, depending only on~$K$, then~$\E[T_s]-s\leq Q_s/100$ for every~$s\in[N/8,N/4]$ such that~$\sup_{s\leq r\leq2s}\Var[T_r/Q_r]$ is at most~$c$.
\end{lemma}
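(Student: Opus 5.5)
We follow Step~4 of Section~\ref{ssec:overview}. Fix~$s\in[N/8,N/4]$, let~$M\coloneqq\lceil s\rceil$, and let~$F$ be the minimum~\eqref{eq:F} over chains across the~$M$ slabs of width~$s$, with minimizing chain~$x_0,\ldots,x_M$. We bound~$\E[F]-Ms$ from above and below and compare the two bounds.

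\emph{Upper bound.} By~\eqref{eq:columns}, $\E[F]\leq\E[T_{Ms}]+CM\log^2(2+Ms)$. By Lemma~\ref{lem:classical} at~$t=Ms\leq Cs^2$, $\E[T_{Ms}]-Ms\leq C(Ms\log(2+Ms))^{\nf12}\leq Cs\log^{\nf12}(2+s)$. Together these give~$\E[F]-Ms\leq CM\log^2(2+s)$.

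\emph{Lower bound.} First fix points~$u,v$ on the two faces of a slab of width~$s$. Choose a unit vector~$e$ and length~$r=|v-u|\in[s,2s]$ along~$v-u$. By isotropy, $\E[T(u,v)]=\E[T_r]$. Subadditivity from~$r\geq s$ should give~$\E[T_r]-r\geq(\E[T_s]-s)/(1+\sup\rho)$: write~$r=ks+\theta$, apply subadditivity in reverse through a point at distance~$s$ from~$u$, and bound the leftover length~$\theta$ by~$\sup\rho$. Then~$\E[T^{(i)}(u,v)]\geq|v-u|+(\E[T_s]-s)/(1+\sup\rho)\geq s+(\E[T_s]-s)/(1+\sup\rho)$, using~$T^{(i)}\geq T$. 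This elementary inequality is the step I expect to need the most care. If it fails as stated, I would instead use the convexity of~$t\mapsto\E[T_t]-t$ implied by subadditivity, applied between~$s$ and~$2s$.

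Next, control the random part along the random chain. Let
\begin{equation*}
Z_i(j,k)\coloneqq\kappa^{-1}Q_s^{-1}\sup_{u\in I_{i-1}(j),v\in I_i(k)}\bigl(\E[T^{(i)}(u,v)]-T^{(i)}(u,v)\bigr)_+\,.
\end{equation*}
These arrays are independent over~$i$. We need~$\E[(Z_i(j,k)-|j-k|^2)_+^{4d}]\leq1$ with~$\kappa$ small when~$c$ is small. For this, interpolate between the small variance hypothesis and the uniform moment bounds~\eqref{eq:profiles}. Cover each cube pair by a grid of mesh~$h(sQ_s)^{\nf12}$. On each of the~$C(h)$ grid pairs, the variance is at most~$cQ_r^2\leq CcQ_s^2$ by the hypothesis, since~$|v-u|\in[s,2s]$. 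Hölder with~\eqref{eq:profiles} then gives small~$4d$th moments, of order~$(c\,C(h))^{\theta}$. Between grid points, the modulus~\eqref{eq:modulus} at length~$s$ (with~$\eta$ small, hence~$h$ small) controls~$T^{(i)}-|v-u|$, and the mean varies like~$|v-u|$ up to~$\eta Q_s$. For large~$|j-k|$, the penalty~$|j-k|^2$ absorbs the crude bound~$CQ_s(1+|j-k|)$. Choosing~$\eta$, then~$h$, then~$c$ makes~$\kappa$ small. Then~\eqref{eq:chainsum}, applicable by Lemma~\ref{lem:rowenergy}, gives~$\E[\sum_iZ_i(x_{i-1},x_i)]\leq CM$. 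Hence
\begin{equation*}
\E[F]\geq M\bigl(s+(\E[T_s]-s)/(1+\sup\rho)\bigr)-C\kappa MQ_s\,.
\end{equation*}

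\emph{Conclusion.} Comparing the two bounds gives~$\E[T_s]-s\leq C\kappa Q_s+C\log^2(2+s)$. This is at most~$Q_s/100$ once~$\kappa$ is small (that is, $c$ small) and~$N$ is large depending on~$K$, since~$Q_s\geq s^{\nf25}$. The main obstacle is the uniform small-fluctuation estimate for the~$Z_i$, which must combine the pointwise variance hypothesis with the continuity of Lemma~\ref{lem:modulus}.
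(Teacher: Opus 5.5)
Your architecture is the paper's: the same row of $\lceil s\rceil$ slabs of width $s$, the same upper bound from \eqref{eq:columns} and \eqref{eq:classical}, and the same lower bound built from a small-moment estimate on a mesh (H\"older between the variance hypothesis and \eqref{eq:profiles}, then \eqref{eq:modulus}) and fed into \eqref{eq:chainsum}. Using only the lower deviation $(\E[T^{(i)}]-T^{(i)})_+$ instead of $|T^{(i)}-\E[T^{(i)}]|$ is harmless. However, the elementary step you flagged is wrong as written, and neither of your justifications works.

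\emph{The elementary inequality.} You cannot get $\E[T_r]-r\geq(\E[T_s]-s)/(1+\sup\rho)$ for $r\in[s,2s]$ from subadditivity. Going through a point at distance $s$ gives only $\E[T_r]\geq\E[T_s]-(r-s)\sup\rho$, that is, $\E[T_r]-r\geq\E[T_s]-s-(1+\sup\rho)(r-s)$. This is useless once $r-s$ is of order $s$. The fallback also fails: subadditivity does not make $t\mapsto\E[T_t]-t$ convex, or even monotone. For instance, concave functions vanishing at $0$ are subadditive, and so is $t\mapsto|\sin t|$. Nothing available rules out $\E[T_r]-r$ being much smaller than $\E[T_s]-s$.

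\emph{The fix.} What your argument actually uses is only the weaker bound $\E[T^{(i)}(u,v)]\geq s+(\E[T_s]-s)/(1+\sup\rho)$, with $s$ rather than $|v-u|$ as the leading term. This bound is elementary. Average $\E[T_r]\geq r$ (from $\mu=1$) and $\E[T_r]\geq\E[T_s]-(r-s)\sup\rho$ with weights $\sup\rho/(1+\sup\rho)$ and $1/(1+\sup\rho)$. The terms in $r$ cancel and give $\E[T_r]\geq s+(\E[T_s]-s)/(1+\sup\rho)$, which is exactly how the paper argues. With this replacement, the rest of your lower bound goes through unchanged.

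\emph{Two smaller points.}
\begin{itemize}
\item The variance hypothesis concerns $T_r$, that is, $T$, but your mesh argument needs small variance for $T^{(i)}(u,v)$. You must transfer it with \eqref{eq:slab}. This adds $C\log^4(2+s)/Q_s^2$ to $\Var[T^{(i)}(u,v)/Q_s]$, which is negligible for $N$ large.
\item \eqref{eq:profiles} and \eqref{eq:modulus} only apply to cube pairs with $(sQ_s)^{\nf12}|k-j|$ at most of order $s$. So your claimed crude bound $CQ_s(1+|j-k|)$ is not available for all $j,k$. Do as the paper does: set the array to zero when $(sQ_s)^{\nf12}|k-j|>s/2$. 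Admissible chains never use those pairs, so you need not absorb them with the penalty $|j-k|^2$.
\end{itemize}
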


\begin{proof}
Let~$F,T^{(i)},I_i(j)$ and~$x_i$ be as defined before Lemma~\ref{lem:rowenergy}, for~$M=\lceil s\rceil$ slabs of width~$s$. For~$1\leq i\leq\lceil s\rceil$ and~$j,k\in\Z^{d-1}$, let
\begin{equation*}
X_i(j,k)\coloneqq\frac1{Q_s}\sup_{u\in I_{i-1}(j),\,v\in I_i(k)}|T^{(i)}(u,v)-\E[T^{(i)}(u,v)]|\qquad\text{if }(sQ_s)^{\nf12}|k-j|\leq s/2\,,
\end{equation*}
and~$X_i(j,k)\coloneqq0$ otherwise. The arrays~$X_i$ are independent over~$i$.

We first show that, for every~$\kappa\in(0,1]$, we have~$\E[X_i(j,k)^{4d}]^{\nf1{4d}}\leq\kappa$ if~$\sup_{s\leq r\leq2s}\Var[T_r/Q_r]$ is small and~$N$ is large, depending on~$\kappa$. For~$u\in I_{i-1}(j)$ and~$v\in I_i(k)$ with~$(sQ_s)^{\nf12}|k-j|\leq s/2$, we have~$s\leq|v-u|\leq2s$ for~$N$ large, and~$T(u,v)$ has the law of~$T_{|v-u|}$. By the hypothesis on the variance, \eqref{eq:slab} and~\eqref{eq:profiles}, $D\coloneqq(T^{(i)}(u,v)-\E[T^{(i)}(u,v)])/Q_s$ satisfies~$\E[D^{8d}]\leq C$ and~$\E[D^2]\leq C\sup_{s\leq r\leq2s}\Var[T_r/Q_r]+C\log^4(2+s)/Q_s^2$, so, by H\"older's inequality, $\E[D^{4d}]\leq C\E[D^2]^c$. For a fixed~$h\in(0,1]$, the same holds, with~$C$ depending also on~$h$, for the maximum of~$|D|$ over a mesh of~$I_{i-1}(j)\times I_i(k)$ of spacing~$h(sQ_s)^{\nf12}$. Subtracting~$|v-u|$ from both~$T^{(i)}(u,v)$ and its mean changes nothing, so~\eqref{eq:modulus} extends this to the cubes. Choosing~$h$ small, then the variance small and then~$N$ large, all depending on~$\kappa$, proves the claim.

For~$r\geq s$, we have~$\E[T_r]\geq r$ and, by subadditivity, $\E[T_r]\geq\E[T_s]-(r-s)\sup\rho$. Averaging these two bounds with the weights~$\sup\rho/(1+\sup\rho)$ and~$1/(1+\sup\rho)$ gives~$\E[T_r]\geq s+(\E[T_s]-s)/(1+\sup\rho)$. For~$u$ and~$v$ as above, since~$T^{(i)}\geq T$, this bound at~$r=|v-u|$ gives
\begin{equation*}
T^{(i)}(u,v)-s\geq\frac{\E[T_s]-s}{1+\sup\rho}-Q_sX_i(j,k)\,.
\end{equation*} By~\eqref{eq:chainsum}, applied to the arrays~$X_i/\kappa$, we have~$\E[\sum_iX_i(x_{i-1},x_i)]\leq C\kappa\lceil s\rceil$. Summing the bound on~$T^{(i)}(u,v)-s$ at~$u=x_{i-1}$ and~$v=x_i$ therefore gives
\begin{equation*}
\E[F]-s\lceil s\rceil\geq\lceil s\rceil\Bigl(\frac{\E[T_s]-s}{1+\sup\rho}-C\kappa Q_s\Bigr)\,.
\end{equation*} By~\eqref{eq:columns} and~\eqref{eq:classical} at the length~$s\lceil s\rceil\leq2s^2$, the left side is at most~$C\lceil s\rceil\log^2(2+s)$. Hence~$\E[T_s]-s\leq C\kappa Q_s+C\log^2(2+s)$. Since~$Q_s\geq s^{\nf12-\nf1K}$, this is at most~$Q_s/100$ if~$\kappa$ is small and~$N$ is large.
\end{proof}

\begin{proof}[Proof of Proposition~\ref{prop:induction}]
Let~$t\in[N/8,N]$. By~\eqref{eq:parent}, $\E[\exp(8|T_t-\E[T_t]|/Q_t)]\leq C$. Hence the Cauchy--Schwarz inequality gives
\begin{equation*}
\E\bigl[\exp\bigl(4|T_t-\E[T_t]|/Q_t\bigr)\bigr]\leq1+C\Var[T_t/Q_t]^{\nf12}\,.
\end{equation*} By Proposition~\ref{prop:gain} with~$n=t/K$, we may fix~$K$ so that, for~$N$ large depending on~$K$, $C\Var[T_t/Q_t]^{\nf12}\leq1$ and~$\Var[T_t/Q_t]$ is as small as Lemma~\ref{lem:smallmean} requires. With~$t=N$, this gives~$|T_N-\E[T_N]|\leq\Os_1(Q_N/4)$. Lemma~\ref{lem:smallmean} at~$s=N/4$ and subadditivity give
\begin{equation*}
0\leq\E[T_N]-N\leq4\bigl(\E[T_{N/4}]-N/4\bigr)\leq Q_N/25\,.
\end{equation*}
Hence~\eqref{eq:improvement} follows by~\eqref{eq:sum}, and all the thresholds on~$N$ depend only on~$K$.
\end{proof}

\end{document}